\newtheorem{theorem}{Theorem}[section]
\newtheorem{conjecture}[theorem]{Conjecture}
\newtheorem{corollary}[theorem]{Corollary}
\newtheorem{lemma}[theorem]{Lemma}
\newtheorem{problem}[theorem]{Problem}
\newtheorem{proposition}[theorem]{Proposition}
\newtheorem{remark}[theorem]{Remark}
\title{A linear preserver problem on maps which are triple derivable at orthogonal pairs}
\author[A. Ben Ali Essaleh]{Ahlem Ben Ali Essaleh}
\email{ahlem.benalisaleh@gmail.com}
\address{(Current address) Department of Mathematics, Faculty of Science, University of Hafr Al Batin, P.O.Box 1803, Hafr Al Batin 31991, Saudi Arabia\\
(Permanent address) D{\'e}partement de Math{\'e}matiques, Institut Pr{\'e}paratoire aux Etudes d'Ing{\'e}nieurs de Gafsa, Universit{\'e} de Gafsa, 2112 Gafsa, Tunisia.}
\author[A.M. Peralta]{Antonio M. Peralta}
\email{aperalta@ugr.es}
\address{Departamento de An{\'a}lisis Matem{\'a}tico, Facultad de
Ciencias, Universidad de Granada, 18071 Granada, Spain.}
\keywords{C$^*$-algebra; JB$^*$-algebra; JB$^*$-triple; derivations; triple derivations; generalized derivations; maps triple derivable at zero; maps triple derivable at orthogonal elements;}
\subjclass[2010]{47B49; 46L05; 46L40; 46T20; 47B48; 17C65}
\begin{document}

\maketitle

\begin{abstract} A linear mapping $T$ on a JB$^*$-triple is called triple derivable at orthogonal pairs if for every $a,b,c\in E$ with $a\perp b$ we have $$0 = \{T(a), b,c\} + \{a,T(b),c\}+\{a,b,T(c)\}.$$ We prove that for each bounded linear mapping $T$ on a JB$^*$-algebra $A$ the following assertions are equivalent:\begin{enumerate}[$(a)$]\item $T$ is triple derivable at zero;
\item $T$ is triple derivable at orthogonal elements;
\item There exists a Jordan $^*$-derivation $D:A\to A^{**}$, a central element $\xi\in A^{**}_{sa},$ and an anti-symmetric element $\eta$ in the multiplier algebra of $A$, such that $$ T(a) = D(a) + \xi \circ a + \eta \circ a, \hbox{ for all } a\in A;$$
\item There exist a triple derivation $\delta: A\to A^{**}$ and a symmetric element $S$ in the centroid of $A^{**}$ such that $T= \delta +S$.
\end{enumerate}  The result is new even in the case of C$^*$-algebras. We next establish a new characterization of those linear maps on a JBW$^*$-triple which are triple derivations in terms of a good local behavior on Peirce 2-subspaces. We also prove that assuming some extra conditions on a JBW$^*$-triple $M$, the following statements are equivalent for each bounded linear mapping $T$ on $M$:\begin{enumerate}[$(a)$]\item $T$ is triple derivable at orthogonal pairs;
\item There exists a triple derivation $\delta: M\to M$ and an operator $S$ in the centroid of $M$ such that $T = \delta + S$.
\end{enumerate}
\end{abstract}

\section{Introduction:}

\justify

Suppose $X$ is a Banach $A$-bimodule over a complex Banach algebra $A$. A derivation from $A$ onto $X$ is a linear mapping $D: A\to X$ satisfying the following algebraic identity \begin{equation}\label{eq fundamental identity of derivations} D(a b) = D(a) b + a D(b), \ \ \forall(a,b)\in A^2.
\end{equation}

Researchers working on preservers problems are recently exploring the idea of finding conditions, weaker than those in the original definition, under which a (continuous) linear mapping is a derivation, a (Jordan) homomorphism, etc. For example,
\begin{problem}\label{problem general problem}  Suppose $T: A\to X$ is a linear map satisfying \eqref{eq fundamental identity of derivations} only on a proper subset $\mathfrak{D}\subset A^2$. Is $T$ a derivation?
\end{problem}

There is no need to comment that the role of the set $\mathfrak{D}$ is the real core of the question. A linear map $T: A\to X$ is said to be a \emph{derivation} or \emph{derivable at a point} $z\in A$ if the identity \eqref{eq fundamental identity of derivations} holds for every $(a,b)\in\mathfrak{D}_z:=\{ (a,b)\in A^2 : a b =z\}$. In order to illustrate the interest on linear maps on Banach algebras which are derivable at a concrete point, the reader can consult the references \cite{Bre07,ChebKeLee,Jing,JingLuLi2002,Lu2009,ZhuXiLi}, among other additional papers.\smallskip

H. Ghahramani, Z. Pan \cite{GhaPan2018} and B. Fadaee and H. Ghahramani \cite{FaaGhahra2019} have recently considered certain variants of Problem \ref{problem general problem} in their studies on continuous linear operators from a C$^*$-algebra $A$ into a Banach $A$-bimodule $X$ behaving like derivations or anti-derivations at elements in a certain subset of $A^2$ determined by orthogonality conditions. Let us detail the problem.

\begin{problem}\label{problem anti-derivable mapps} Let $T:A\to X$ be a continuous linear operator which is anti-derivable at zero, i.e.,  \begin{equation}\label{eq antider} \hbox{ $T(ab) = T(b) a + b T(a)$  for all $(a,b)\in \mathfrak{D}_0$.}
\end{equation}  Is $T$ an anti-derivation or a perturbation of an anti-derivation?
\end{problem}

Clearly, a mapping $\mathcal{D} : A\to X$ is called an \emph{anti-derivation} if the identity \eqref{eq antider} holds for every $(a,b)\in A^2$. If $A$ is a C$^*$-algebra, a $^*$-derivation (respectively, a $^*$-anti-derivation) from $A$ into itself, or into $A^{**}$, is a derivation (respectively, an anti-derivation) $d: A\to A$ satisfying $d(a^*) = d(a)^*$ for all $a\in A$.\smallskip

Concerning Problem \ref{problem general problem}, B. Fadaee and H. Ghahramani prove in \cite[Theorem 3.1]{FaaGhahra2019} that for a continuous linear map $T : A \to A^{**}$, where $A$ is a C$^*$-algebra, the following statement holds: $T$ is derivable at zero if and only if there is a continuous derivation $d : A \to  A^{**}$ and an element $\eta\in  \mathcal{Z}(A^{**})$ (the centre of $A^{**}$) such that $T(a) = d(a) + \eta a$ for all $a \in A$. They also obtain a similar conclusion when $T$ is r-$^*$-derivable at zero (that is, $ab^* = 0 \Rightarrow$ $a T(b)^* + T(a) b^* = 0$). Similar results were established by H. Ghahramani and Z. Pan for linear maps on a unital $^*$-algebra which is zero product determined (cf. \cite[Theorem 3.1]{GhaPan2018}).\smallskip


B. Fadaee and H. Ghahramani also study continuous linear maps from a C$^*$-algebra $A$ into its bidual which are anti-derivable at zero (see \cite[Theorem 3.3]{FaaGhahra2019}). In \cite{AbulJamPe20}, D.A. Abulhamail, F.B. Jamjoom and the second author of this note prove that for each bounded linear operator $T$ from a C$^*$-algebra $A$ into an essential Banach $A$-bimodule $X$ the following statements are equivalent:\begin{enumerate}[$(a)$]\item $T$ is anti-derivable at zero (i.e. $ab =0$ in $A$ implies $T(b) a + b T(a)=0$);
\item There exist an anti-derivation $d:A\to X^{**}$ and an element $\xi \in X^{**}$ satisfying $\xi a = a \xi,$ $\xi [a,b]=0,$
$T(a b) = b T(a) + T(b) a - b \xi a,$ and $T(a) = d(a) + \xi a,$ for all $a,b\in A$.
\end{enumerate} The conclusion can be improved in the case in which $A$ is unital, or if $X$ is a dual Banach $A$-bimodule.\smallskip

A linear mapping $\delta$ on a JB$^*$-triple $E$ with triple product $\{.,.,.\}$ is called a triple derivation if it satisfies the following triple Leibniz's rule $$\delta\{a,b,c\} = \{\delta(a), b, c\} + \{a,\delta(b), c\} + \{a,b,\delta(c)\},$$ for all $a,b,c\in E$ (see subsection \ref{subsect: background} for the concrete definitions and relations between the basic notions). The class of triple derivations have been intensively studied (see, for example, \cite{BarFri90,HoMarPeRu,HoPeRu13,PeRu14}). Surjective linear isometries between C$^*$-algebras need not be, in general, C$^*$-homomorphisms nor Jordan $^*$-homomorphisms (cf. the famous paper \cite{Kad51}). However, a remarkable result by W. Kaup proves that a linear surjection between JB$^*$-triples is an isometry if and only if it is a triple isomorphism (cf. \cite[Proposition 5.5]{Ka83}). Since early papers in the theory of JB$^*$-triple up to recently contributions, triple derivations can be applied to define one-parameter continuous semigroups of surjective linear isometries, equivalently, triple isomorphisms on JB$^*$-triples (see, for example, \cite{GarPe20}).\smallskip

A linear mapping $T$ on a JB$^*$-triple $E$ is said to be \emph{triple derivable at an element $z\in E$} if $$T(z)= T\{a,b,c\} = \{T(a), b, c\} + \{a,T(b), c\} + \{a,b,T(c)\},$$ for all $a,b,c\in E$ with $\{a,b,c\}=z$. As we shall see in subsection \ref{subsect: background}, C$^*$-algebras are one of the first natural examples of JB$^*$-triples. In this particular setting, we recently proved in \cite[Theorem 2.3 and Corollary 2.5]{EssPe2018} that every continuous linear map on a unital C$^*$-algebra $A$ which is a triple derivation at the unit of $A$ is a triple derivation and a generalized derivation, that is, $$T(a b) = T(a) b + a T(b) - a T(1) b,$$ for all $a,b\in A$. If we additionally assume that $T(1)=0$, then $T$ is a $^*$-derivation and a triple derivation \cite[Proposition 2.4]{EssPe2018}. We also know from \cite[Theorem 2.9, Corollary 2.10]{EssPe2018} that every bounded linear map on a unital C$^*$-algebra which is triple derivable at zero must be a generalized derivation, if we further assume that $T(1)=0$ then $T$ is a $^*$-derivation and a triple derivation too. Corollary 2.11 in \cite{EssPe2018} proves that every bounded linear map $T$ on a unital C$^*$-algebra which is triple derivable at zero and $T(1)^*=-T(1)$ must be a triple derivation. A more surprising conclusion is that every linear map on a von Neumann algebra which is triple derivable at zero is automatically continuous (see \cite[Corollary 2.14]{EssPe2018}).\smallskip

In this paper we shall extend the above results in two different directions. Firstly, we shall consider linear maps whose domains will be in the wider (and more natural) classes of JB$^*$-algebras and JB$^*$-triples. Secondly, we shall consider a hypothesis which is, a priori, weaker than being triple derivable at zero.  The relation ``being orthogonal'' among elements in C$^*$-algebras, JB$^*$-algebras and JB$^*$-triples has turned out to an useful to understand and classify these structures through maps preserving zero products and orthogonality (see, for example,
\cite{AlBreExVill09,Bur13,BurFerGarMarPe,BurFerGarPe09,BurSanchOr,GarPe20,
LeuWong10,LiuChouLiaoWong2018b,LiuChouLiaoWong2018,
Wong2007}, among others). As we shall detail later, elements $a,b$ in a JB$^*$-triple $E$ are orthogonal if $L(a,b)=\{a,b,\cdot\}=0$, equivalently, $\{a,b,x\} =0$ for all $x\in E$.\smallskip

We shall say that a linear mapping $T$ on a JB$^*$-triple $E$ is \emph{triple derivable at orthogonal pairs} if $$0=T\{a,b,c\} = \{T(a),b,c\} + \{a,T(b),c\} + \{a,b,T(c)\}$$ only for those $a,b,c\in E$ with $a\perp b$. Clearly, every triple derivation on $E$ is triple derivable at orthogonal pairs, and every linear map on $E$ which is triple derivable at zero is triple derivable at orthogonal pairs. In order to present new examples, let us recall that the centroid, $Z(E),$ of a JB$^*$-triple $E$ is the set of all continuous linear operators $S: E\to E$ satisfying $S\{a,b,c\} =\{S(c),b,a\}$ for all $a,b,c\in E$ (see \cite{DiTim88}). By \cite[Lemma 2.6]{DiTim88} for each $S\in Z(E)$ there exists a unique bounded linear operator $R: E\to E$ satisfying $\{a,S(b),c\} = R \{a,b,c\}$ for all $a,b,c\in E$. Let $\delta: E\to E$ be a triple derivation and $S\in Z(E)$. It is easy to see that the mapping $T= \delta + S$ satisfies that for each $a,b,c\in E$ with $a\perp b$ we have $$ 0 = \{T(a),b,c\} + \{a,T(b),c\}+\{a,b,T(c)\}.$$ That is, $T$ is triple derivable at orthogonal pairs. We are naturally led to the following conjecture:

\begin{conjecture}\label{conjecture 1}
Every continuous linear mapping $T: E\to E$ which is triple derivable at orthogonal pairs must be of the form $T = \delta +S$, where $S$ is a triple derivation on $E$ and $S$ is an element in the centroid of $E$.
\end{conjecture}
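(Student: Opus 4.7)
The plan is to exploit a simplified form of the hypothesis, construct the centroid piece $S$ first, and then verify that $\delta := T - S$ satisfies the triple Leibniz rule. First, note that orthogonality $a\perp b$ is equivalent to $L(a,b) = 0$, so $\{a,b,T(c)\} = 0$ automatically; hence being triple derivable at orthogonal pairs reduces to the working identity
\[
\{T(a),b,c\} + \{a,T(b),c\} = 0 \qquad (a\perp b, \ c\in E). \tag{$\star$}
\]
Every subsequent step should feed from $(\star)$.

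The second step is to exploit Peirce decomposition. After passing to the bidual $E^{**}$ (which is a JBW$^*$-triple, hence rich in tripotents), for each tripotent $e\in E^{**}$ one has $E_2^{**}(e)\perp E_0^{**}(e)$, so $(\star)$ is applicable with $a\in E_2^{**}(e)$, $b\in E_0^{**}(e)$, and $c$ ranging over the three Peirce subspaces. Projecting onto Peirce components should force $T^{**}$ to respect the decomposition up to a correction controlled by a centre-valued map. Since $E_2^{**}(e)$ is a unital JB$^*$-algebra with unit $e$, the main JB$^*$-algebra theorem (equivalence $(a)\Leftrightarrow(d)$ in the abstract) applies locally: on each $E_2^{**}(e)$, the restriction of $T^{**}$ decomposes as a triple derivation plus multiplication by a central element $S_e$.

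The candidate operator $S$ is then assembled by gluing the local centroid pieces $\{S_e\}$ as $e$ ranges over the tripotents of $E^{**}$. Uniqueness of the local decomposition, together with the centroid identity $\{a,S(b),c\} = R\{a,b,c\}$ recalled in the introduction, should give compatibility across overlapping Peirce-2 subspaces and yield a well-defined $S\in Z(E^{**})$. One then invokes the new characterization announced in the abstract (a bounded linear map on a JBW$^*$-triple is a triple derivation whenever it behaves well on every Peirce-2 subspace) to conclude that $\delta := T^{**} - S$ is a triple derivation on $E^{**}$. A final Arens-style argument, using the weak$^*$-density of $E$ in $E^{**}$ and the continuity of $T$, should show that $\delta$ and $S$ restrict to give the desired decomposition of $T$ on $E$.

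The main obstacle is the gluing step, and more generally the interaction between the local centroid pieces and the \emph{Peirce-1} subspaces: $(\star)$ directly controls only the behaviour between Peirce-2 and Peirce-0 parts, while triple derivability couples all three Peirce subspaces. Ensuring coherence of $\{S_e\}$ and ruling out Peirce-1 anomalies seems to require either an abundance of tripotents (the JBW$^*$-triple hypothesis) or a structural restriction excluding exceptional or rank-one summands; this is presumably the role of the unspecified extra conditions imposed in the paper's second main theorem. A secondary difficulty is verifying that $\delta$ and $S$ actually take values in $E$ and not merely in $E^{**}$, a range issue that is delicate whenever $E$ itself is not a dual space and which likely forces the conjecture in its stated form to be restricted to the JBW$^*$-triple setting.
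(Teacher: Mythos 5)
Your plan correctly isolates the working identity $\{T(a),b,c\}+\{a,T(b),c\}=0$ for $a\perp b$ and the local strategy the paper actually follows (compress to Peirce 2-subspaces via contractive projections, apply the JB$^*$-algebra result there, glue the central pieces $\xi_e$, and finish with a characterization of triple derivations). However, there is a genuine gap, and indeed no complete proof of the statement as written can exist: the conjecture is false in full generality. For a rank-one JB$^*$-triple, e.g.\ a Hilbert space $H$ with $\dim H\geq 2$ regarded as the type 1 Cartan factor $B(H,\mathbb{C})$, there are no non-trivial orthogonal pairs, so every bounded linear operator is vacuously triple derivable at orthogonal pairs; yet the triple derivations there are exactly the skew-adjoint operators and the centroid reduces to $\mathbb{C}\,\mathrm{Id}$, so a generic bounded operator is not of the form $\delta+S$. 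The paper records precisely this obstruction and only establishes the conjecture under extra hypotheses: for JB$^*$-algebras (Theorem \ref{t bd linear maps triple derivable at orth pairs on a JBstar algebra}), for JBW$^*$-triples on which linearity is determined by Peirce 2-subspaces (Theorem \ref{t triple derivable maps on JBWstar triples}), and for atomic JBW$^*$-triples all of whose Cartan factors have rank at least $2$ (Theorem \ref{t triple derivable maps on atomic JBWstar triples}). Your closing paragraph gestures at such restrictions but does not supply them, so the argument cannot close as stated.

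Even within the restricted setting, your sketch leaves unresolved exactly the points where the paper has to work. First, passing to the bidual: the hypothesis concerns orthogonal pairs in $E$ only, and nothing in your outline shows that $T^{**}$ is triple derivable at orthogonal pairs of $E^{**}$; the paper avoids this transfer altogether, treating JB$^*$-algebras via orthogonal forms on $A$ (obtaining $A^{**}$-valued derivations) and otherwise assuming from the outset that the domain is a JBW$^*$-triple. Second, the gluing: well-definedness of $S$ on algebraic elements and across overlapping Peirce 2-subspaces is handled by Lemmas \ref{l A centre and a tripotent}, \ref{l well definition on algebraic elements} and \ref{l definition of S}, but linearity of the glued map is not automatic --- it is exactly the extra hypothesis ``linearity determined by Peirce 2-subspaces'' (or factoriality plus rank at least $2$ in the atomic case), and the paper's example of a non-linear $1$-homogeneous map on a Hilbert space whose compressions to all Peirce 2-subspaces are linear shows that this step genuinely fails without such an assumption. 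Third, showing that $S$ lies in the centroid requires proving $S(M_1(v))\subseteq M_1(v)$ for every tripotent $v$, which is the most delicate part of the proof of Theorem \ref{t triple derivable maps on JBWstar triples} and is not controlled by your identity acting on Peirce-2/Peirce-0 pairs alone. Finally, concluding that $\delta=T-S$ is a triple derivation uses the skew-symmetry criterion of Theorem \ref{t characterization of triple derivations} applied to $\tfrac12(P_2(e)+Q(e))\delta(e)=0$, not merely good behaviour on Peirce 2-subspaces; this criterion (and the preliminary Proposition \ref{p first characterization of triple derivations}) is an essential ingredient your proposal does not reconstruct.
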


In this paper we provide a complete positive proof of Conjecture \ref{conjecture 1} in the case in which the JB$^*$-triple $E$ is a JB$^*$-algebra (see Theorem \ref{t bd linear maps triple derivable at orth pairs on a JBstar algebra}). We also study this conjecture for general JB$^*$-triples. In section 2 we shall present some new applications of the contractive projection principle to give a first characterization of those bounded linear maps on a JB$^*$-triple which are triple derivations. A more general characterization is established in Theorem \ref{t characterization of triple derivations}, where we prove that for any bounded linear map $T$ on a JB$^*$-triple $E$ in which tripotents are norm-total, the following statements are equivalent:
\begin{enumerate}[$(a)$] \item $T$ is a triple derivation;
\item $T$ is triple derivable at orthogonal elements and for each tripotent $e$ in $E$, the element $P_2(e)T(e)$ is a skew symmetric element in the JB$^*$-algebra $E_2(e)$ {\rm(}i.e. $(P_2(e)T(e))^{*_e} = -T(e)${\rm)}.
\end{enumerate}

Section \ref{sec:5} is devoted to the study of Conjecture \ref{conjecture 1}. Our main conclusion (see Theorem \ref{t triple derivable maps on JBWstar triples}) affirms that if $M$ is a JBW$^*$-triple such that linearity on it is determined by Peirce 2-subspaces, the the following statements are equivalent for any bounded linear map $T$ on $M$: \begin{enumerate}[$(a)$]\item $T$ is triple derivable at orthogonal pairs;
\item There exists a triple derivation $\delta: M\to M$ and an operator $S$ in the centroid of $M$ such that $T = \delta + S$.
\end{enumerate}

The existence of JB$^*$-triples admitting no non-trivial pairs of orthogonal elements (i.e. JB$^*$-triples or rank one), makes Conjecture \ref{conjecture 1} nonviable. It should be noted that if a JBW$^*$-triple satisfies that linearity on it is determined by Peirce 2-subspaces, then it cannot have rank one. By applying that Cartan factors have a very trivial centroid, we also show that if $M = \bigoplus_{j}^{\infty} C_j$ is an atomic JBW$^*$-triple such that each $C_j$ is a Cartan factor with rank at least $2$, then a bounded linear mapping $T: M\to M$ is triple derivable at orthogonal pairs if, and only if, there exists a triple derivation $\delta: M\to M$ and an operator $S$ in the centroid of $M$ such that $T = \delta + S$ (see Theorem \ref{t triple derivable maps on atomic JBWstar triples}).

\subsection{Basic background}\label{subsect: background}

A \emph{Jordan algebra} $A$ is a (non-necessarily associative) algebra whose product is abelian and satisfies the so-called \emph{Jordan identity} $$(a \circ b)\circ a^2 = a\circ (b \circ a^2), \ \ (a,b\in A).$$ A \emph{Jordan Banach algebra} is a Jordan algebra $A$ equipped with a complete norm, $\|.\|$, satisfying $\| a\circ b\| \leq \|a\| \ \|b\|$ ($a,b\in A$). Every real or complex associative Banach algebra is a real Jordan Banach algebra with respect to the natural Jordan product $a\circ b: = \frac12 (a b +ba)$. A \emph{JB$^*$-algebra} is a complex Jordan Banach algebra $A$ equipped
with an algebra involution $^*$ satisfying  $\|\{ a,{a},a\} \|= \|a\|^3$, $a\in A,$ where $\{a,{a},a\}  = 2 (a\circ a^*) \circ a - a^2 \circ a^*$. C$^*$-algebras are among the examples of JB$^*$-algebras when they are equipped with the natural Jordan product and the natural norm and involution. The references \cite{HOS,Cabrera-Rodriguez-vol1,Wri77} can be consulted as reference guides for the basic notions and results in the theory of JB$^*$-algebras.\smallskip

C$^*$- and JB$^*$-algebras are particular examples of Banach spaces in a wider class of complex Banach spaces known under the name of JB$^*$-triples. A \emph{JB$^*$-triple} is a complex Banach space $E$ equipped with a continuous triple product $\{ .,.,.\} : E\times E\times E \to E,$ $(a,b,c)\mapsto \{a,b,c\},$ which is bilinear and symmetric in $(a,c)$ and conjugate linear in $b$,
and satisfies the following axioms for all $a,b,x,y\in E$:
\begin{enumerate}[{\rm (a)}] \item Jordan identity: $$L(a,b) L(x,y) = L(x,y) L(a,b) + L(L(a,b)x,y)
	- L(x,L(b,a)y),$$ where $L(a,b):E\to E$ is the operator defined by $L(a,b) x = \{ a,b,x\};$
\item $L(a,a)$ is a hermitian operator with non-negative spectrum;
\item $\|\{a,a,a\}\| = \|a\|^3$.\end{enumerate}
This definition is an analytic-algebraic approach, established by W. Kaup in \cite{Ka83}, to study bounded symmetric domains in complex Banach spaces from the holomorphic point of view. Concretely, for each abstract bounded symmetric domain $\mathcal{D}$ in an arbitrary complex Banach space there exists a unique (up to linear isometries) JB$^*$-triple $E$ whose open unit ball is biholomorphically equivalent to $\mathcal{D}$.\smallskip

JB$^*$-triples enjoy many geometric properties, for example, a linear bijection $T$ on a JB$^*$-triple $E$ is an isometry if and only if it is a triple isomorphism, that is, $T\{a,b,c\} = \{T(a), T(b), T(c)\}$ for all $a,b,c\in E$ (cf. \cite[Proposition 5.5]{Ka83}).\smallskip

The triple products \begin{equation}\label{eq product operators} \{x,y,z\} =\frac12 (x y^* z +z y^*
x),\end{equation} and \begin{equation}\label{eq product jordan}\{ x,y,z\} = (x\circ y^*) \circ z + (z\circ y^*)\circ x -
(x\circ z)\circ y^*, \end{equation} are employed to induce a structure of JB$^*$-triple on C$^*$- and JB$^*$-algebras, respectively. The first one is also valid to define an structure of JB$^*$-triple on the space $B(H,K)$ of all bounded linear operators between two complex Hilbert spaces $H$ and $K$. Along the paper we shall write $B(X)$ for the Banach space of all bounded linear operators on a Banach space $X$. \smallskip

An element $e$ in a real or complex JB$^*$-triple $E$ is said to be a \emph{tripotent} if $\{e,e,e\}=e$. 
Each tripotent $e\in E$, determines a decomposition of $E,$ $$E= E_{2} (e) \oplus E_{1} (e) \oplus E_0 (e),$$ known as the \emph{Peirce decomposition}\label{eq Peirce decomposition} associated with $e$, where $E_j (e)=\{ x\in E : \{ e,e,x\} = \frac{j}{2}x \}$
for each $j=0,1,2$.\smallskip

Triple products among elements in the Peirce subspaces satisfy the following \emph{Peirce arithmetic}:\label{Peirce arithmetic} $\{ {E_{i}(e)},{E_{j} (e)},{E_{k} (e)}\}\subseteq E_{i-j+k} (e)$ if $i-j+k \in \{ 0,1,2\},$ and $\{ {E_{i}(e)},{E_{j} (e)},{E_{k} (e)}\}=\{0\}$ otherwise, and $$\{ {E_{2} (e)},{E_{0}(e)},{E}\} = \{ {E_{0} (e)},{E_{2}(e)},{E}\} =0.$$ Consequently, each Peirce subspace $E_j(e)$ is a JB$^*$-subtriple of $E$.\smallskip

The Peirce 2-subspace $E_2(e)$ enjoys has an additional structure. Namely, $E_2 (e)$ is a unital JB$^*$-algebra with unit $e$,
product $x\circ_e y := \{x,e,y\}$ and involution $x^{*_e} := \{ e,x,e\}$, respectively.\label{eq Peirce-2 is a JB-algebra}\smallskip

A tripotent $e$ in a JB$^*$-triple $E$ is called \emph{complete} if $E_0(e)=\{0\}$.\smallskip

Elements $a,b$ in a JB$^*$-triple $E$ are called \emph{orthogonal} (denoted by $a \perp b$) if $L(a,b) = 0$. The reader is referred to \cite[Lemma 1]{BurFerGarMarPe} for several reformulations of the relation ``being orthogonal'' which will be applied without any explicit mention. It should be also noted that elements $a,b$ in a C$^*$-algebra $A$ are orthogonal in the C$^*$- sense (i.e. $ab^* = b^* a =0$) if and only if they are orthogonal when $A$ is regarded as a JB$^*$-triple.\smallskip

We shall consider the natural partial order $\leq$ on the set of tripotents of a JB$^*$-triple $E$ defined by $e\leq u$ if $u-e$ is a tripotent with $u-e \perp e$.

\subsection{Jordan derivations}

Let $X$ be a Jordan-Banach module over a Jordan Banach algebra $A$. A \emph{Jordan derivation} from $A$ into $X$ is a linear map $D: A\to X$ satisfying: $$D(a\circ b ) = D(a)\circ b + a \circ D(b).$$ If $X$ is unital Jordan Banach module over a JB$^*$-algebra $A$, it is easy to check that $D(1) = 0$, for every Jordan derivation $D: A\to X$. A \emph{Jordan $^*$-derivation} on $A$ is a Jordan derivation $D$ satisfying $D(a)^* = D(a^*),$ for all $a\in A$. Following standard notation, given $x\in X$ and $a\in A$, the symbols $M_a$ and $M_x$ will denote the mappings $M_a : X\to X$, $x\mapsto M_a (x) =  a\circ x$ and $M_x: A\to X$, $a \mapsto M_x(a) = a\circ x$. By a little abuse of notation, we also denote by $M_a$ the operator on $A$ defined by $M_a (b)  = a\circ b$. Examples of Jordan derivations can be given as follows: if we fix $a\in A$ and $x\in X$, the mapping $$[M_x, M_a] = M_x M_a -M_a M_x : A \to X, \ b\mapsto [M_x, M_a] (b),$$ is a Jordan derivation. A derivation $D: A \to X$ which can be written in the form $D= \sum_{i=1}^{m} \left(M_{x_i} M_{a_i} -M_{a_i} M_{x_i}\right)$, ($x_i\in X, a_i\in A$) is called \emph{inner}. \smallskip


Returning to the setting to triple derivations, by the Jordan identity, given $a,b$ in a JB$^*$-triple $E$, the mapping $\delta (a,b) := L(a,b)-L(b,a)$ is a triple derivation on $E$ and obviously continuous. Actually, every triple derivation on a JB$^*$-triple is automatically continuous (see \cite[Corollary 2.2]{BarFri90}).\smallskip

A JBW$^*$-algebra is a JB$^*$-algebra which is also a dual Banach space (cf. \cite[Theorem 4.4.16]{HOS}). The second dual, $A^{**}$, of a JB$^*$-algebra $A$ is a JBW$^*$-algebra with a respect to a product extending the original Jordan product of $A$ (see \cite[Theorem 4.4.3]{HOS}). The Jordan product of every JBW$^*$-algebra is separately weak$^*$ continuous (cf. \cite[Theorem 4.4.16 and Corollary 4.1.6]{HOS}). In the setting of JB$^*$-triples, a JBW$^*$-triple is a JB$^*$-triple which is also a dual Banach space. The bidual of every JB$^*$-triple is a JBW$^*$-triple \cite{Di86}. The alter ego of Sakai's theorem asserts that every JBW$^*$-triple admits a unique (isometric) predual and its product is separately weak$^*$ continuous \cite{BaTi} (see also \cite[Theorems 5.7.20 and 5.7.38]{Cabrera-Rodriguez-vol2}).\smallskip

Let $E$ be a JB$^*$-triple. It is known that the JB$^*$-subtriple of $E$ generated by a single element $a\in A$ is (isometrically) triple isomorphic to a commutative C$^*$-algebra (cf. \cite[Corollary 4.8]{Ka77}, \cite[Corollary 1.15]{Ka83} and \cite{FriRu85}). In particular, we can find an element $y$ in this subtriple satisfying that $\{y,y,y\} =a$. The element $y,$ denoted by $a^{[\frac13 ]}$, is called the \emph{cubic root} of $a$. The $3^n$th roots of $a$ are inductively defined by $a^{[\frac{1}{3^n}]} = \left(a^{[\frac{1}{3^{n-1}}]}\right)^{[\frac 13]}$, $n\in \mathbb{N}$. If $a$ is an element in a JBW$^*$-triple $M$, the sequence $(a^{[\frac{1}{3^n}]})$ converges in the weak$^*$-topology of $M$ to a tripotent denoted by $r(a)$, which is called the \emph{range tripotent} of $a$. The tripotent $r(a)$ is the smallest tripotent $e\in M$ satisfying that $a$ is positive in the JBW$^*$-algebra $E^{**}_{2} (e)$ (compare \cite[Lemma 3.3]{EdRu96compact}).\smallskip

By the separate weak$^*$ continuity of the triple product of every JBW$^*$-triple, we can conclude that for each triple derivation $\delta$ on a JB$^*$-triple $E$, the bitranspose $\delta^{**} : E^{**}\to E^{**}$ is a triple derivation too.\smallskip

It is known that every Jordan $^*$-derivation on a JB$^*$-algebra is a triple derivation \cite[Lemma 2]{HoMarPeRu}. Reciprocally, for each skew symmetric element $a\in A$, the mapping $M_a (x) = a\circ x$ is a triple derivation, and if $\delta: A\to A$ is a triple derivation, the element $\delta (1)$ is skew symmetric and the mapping $\delta - M_{\delta(u)}$ is a Jordan $^*$-derivation on $A$ (cf. \cite[Lemma 1 and its proof]{HoMarPeRu}).\label{eq Jordan and triple derivations are related by HMPR}

\subsection{The centre of a JB$^*$-algebra and the centroid of a JB$^*$-triple}
Elements $a, b$ in a JB$^*$-algebra $A$ are said to \emph{operator commute} if the corresponding Jordan multiplication operators $M_a$ and $M_b$ commute in $B(A)$, i.e. if $(a\circ c)\circ b=  a\circ (c\circ b)$ for all $c\in A$. The centre of $A$, $Z(A)$, is the set of all elements of $A$ which operator commute with every other element of $A$. By the Shirshov-Cohn theorem for JB algebras (cf. \cite[Theorem 7.2.5]{HOS}) two self-adjoint elements $a$ and $b$ in $A$ generate
a JB$^*$-subalgebra which is a JC$^*$-subalgebra of some $B(H)$ (see also \cite{Wri77}), and, under this realization, $a$ and $b$
commute in the usual sense whenever they operator commute in $B(H)$ (see \cite[Proposition 1]{Top}). It is also known from the same sources that two elements $a$ and $b$ of $A_{sa}$ operator commute if and only if $a^2 \circ b =\{a,b,a\}$ (equivalently, $a^2 \circ b = 2 (a\circ b)\circ a - a^2 \circ b$).\label{eq operator commutativity} If a C$^*$-algebra $A$ is regarded as a JB$^*$-algebra with respect to its natural Jordan product, it follows from the above that the centre of $A$ in the Jordan sense is precisely the usual centre, i.e., the set of all $a\in A$ such that $ ab = ba $ for all $b\in A$.\smallskip

Several authors have treated the notions of commutativity and operator commutativity in C$^*$- and JB$^*$-algebras. Concerning orthogonality, B. Fadaee and H. Ghahramani showed in \cite[Lemma 2.2]{FaaGhahra2019} that given an element $\eta$ in the bidual of a C$^*$-algebra $A$, the condition $a \eta b = 0$ for all $a,b\in A$ with $a b=0$ implies that $\eta$ lies in the centre of $A^{**}$. An appropriate version for Banach bimodules over C$^*$-algebras is established in \cite[Lemma 5]{AbulJamPe20}. We present next a Jordan version of of this fact.

\begin{lemma}\label{l centrality in Jordan algebras} Let $A$ be a JB$^*$-algebra. Let $\xi$ be an element in $A^{**}$ such that $U_{a,b} (\xi) = 0$ for all $a,b\in A_{sa}$ with $a\perp b$. Then $\xi$ lies in the centre of $A^{**}$.
\end{lemma}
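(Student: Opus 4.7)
The goal is to show $\xi\in Z(A^{**})$. By separate weak$^{*}$-continuity of the Jordan product and weak$^{*}$-density of $A$ in $A^{**}$, it suffices to prove that $\xi$ operator commutes with every $a\in A_{sa}$. Since $U_{a,b}(\xi)^{*} = U_{a,b}(\xi^{*})$ for $a,b\in A_{sa}$, the hypothesis is inherited by the real and imaginary parts $\tfrac{1}{2}(\xi+\xi^{*})$ and $\tfrac{1}{2i}(\xi-\xi^{*})$, so we may assume $\xi = \xi^{*}$.

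Fix $a\in A_{sa}$ and consider its continuous functional calculus inside $A$. For any pair $f,g$ of real-valued continuous functions on $\sigma(a)$ (vanishing at $0$ in the non-unital case) with disjoint supports, the elements $f(a),g(a)\in A_{sa}$ are orthogonal in the JB$^{*}$-triple sense, so the hypothesis gives $U_{f(a),g(a)}(\xi)=0$. Approximating characteristic functions of disjoint Borel subsets $B,C\subseteq\sigma(a)$ by such $f,g$ and passing to weak$^{*}$-limits in $A^{**}$, exploiting separate weak$^{*}$-continuity of the Jordan product, one obtains $U_{p,q}(\xi)=0$ for every pair of orthogonal spectral projections $p,q\in A^{**}$ of $a$.

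The crux of the argument is a Peirce-arithmetic computation: for any projection $p\in A^{**}$ with Peirce decomposition $\xi = \xi_{2}+\xi_{1}+\xi_{0}$, $\xi_{i}\in A^{**}_{i}(p)$, a direct calculation (using $A^{**}_{2}(p)\circ A^{**}_{0}(p)=\{0\}$ and $A^{**}_{1}(p)\circ A^{**}_{0}(p)\subseteq A^{**}_{1}(p)$) yields
\[
U_{p,q}(\xi) \;=\; q\circ \xi_{1}\qquad\text{for every } q\in A^{**}_{0}(p).
\]
When $A$ is unital, take $q = 1-p$, which is also a spectral projection of $a$, to obtain $U_{p,1-p}(\xi) = \tfrac{1}{2}\xi_{1}=0$; hence $\xi_{1}=0$, which is exactly the statement that $\xi$ operator commutes with $p$. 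When $A$ is non-unital one takes $q = r(a)-p$ and recovers $\xi_{1}=0$ by letting $a$ range over an approximate identity of $A$, whose range projections converge weak$^{*}$ to $1_{A^{**}}$.

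Consequently $\xi$ operator commutes with every spectral projection of every $a\in A_{sa}$, and the spectral theorem in $A^{**}$ (writing such an $a$ as a norm-limit of real linear combinations of its own spectral projections) extends operator commutativity from projections to $a$ itself; the weak$^{*}$-density argument of the first paragraph then concludes $\xi\in Z(A^{**})$. The principal technical hurdle is the passage from the hypothesis on orthogonal pairs in $A_{sa}$ to the vanishing of the Peirce-$1$ component, together with its weak$^{*}$-extension to spectral projections of $A^{**}$; the non-unital setting is the more delicate one, since $1-p$ is not approachable from $A$ and one must use the approximate identity to force $\xi_{1}=0$ out of the weaker relation $(r(a)-p)\circ\xi_{1}=0$.
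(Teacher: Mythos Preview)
Your approach is genuinely different from the paper's. The paper fixes $\phi\in A^{*}$, observes that $V(a,b)=\phi\, U_{a,b}(\xi)$ is a symmetric orthogonal form on $A$, and then invokes the structure theorem for such forms from \cite{JamPeSidd2015} (namely that $G_V(a)(b)=V(a,b)$ defines a purely Jordan generalized derivation). From this it extracts the identity $U_{a,b}(\xi)=(a\circ b)\circ\xi$ for all $a,b\in A^{**}$ and concludes by evaluating at symmetries and applying \cite[Lemma 4.3.2]{HOS}. Your route through functional calculus and Peirce arithmetic avoids that external machinery and is more direct in spirit.

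The unital part of your argument is essentially sound, though the claim that $U_{p,q}(\xi)=0$ holds for \emph{every} pair of disjoint spectral projections is overstated: the approximation by continuous functions with disjoint supports works cleanly only for half-line projections $p_t=\chi_{(t,\infty)}(a)$ and $1-p_t$ at points $t$ that are not eigenvalues of $a$. That is enough, since such $t$ are co-countable and the corresponding $p_t$ already generate $a$ in norm; you should say this explicitly.

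The non-unital case, however, has a real gap. You obtain only $(r(a)-p)\circ\xi_{1}=0$ and then propose to ``let $a$ range over an approximate identity'' so that $r(a)\to 1$. But $p$ is a spectral projection of the \emph{fixed} element $a$; changing $a$ changes $p$, and there is no mechanism in your argument for keeping $p$ fixed while enlarging $r(a)$. From $(r(a)-p)\circ\xi_{1}=0$ one only gets $(1-r(a))\circ\xi_{1}=\tfrac{1}{2}\xi_{1}$, which does not force $\xi_{1}=0$. The cleanest repair is to reduce to the unital case via the unitization $\tilde A=A\oplus\mathbb{C}1$: any non-zero orthogonal pair in $\tilde A_{sa}$ already lies in $A_{sa}$ (since $a+\alpha 1\perp b$ with $b\neq 0$ forces $\alpha=0$), so the hypothesis transfers; and since $\tilde A^{**}\cong A^{**}\oplus^{\ell_\infty}\mathbb{C}$, membership of $\xi\in A^{**}$ in $Z(\tilde A^{**})$ gives $\xi\in Z(A^{**})$.
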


\begin{proof} Let us fix a functional $\phi\in A^*$. We shall consider the following symmetric bounded bilinear form $V : A\times A\to \mathbb{C}$, $V (a,b) = \phi U_{a,b} (\xi)$. It follows from our assumptions that $V (a,b)= 0,$ for every  $a,b\in A_{sa}$ with $a\perp b$. We have therefore shown that $V$ is a symmetric orthogonal form (cf. \cite[Corollary 3.14]{JamPeSidd2015}). We deduce from \cite[Theorem 3.6]{JamPeSidd2015} that the mapping $G_V : A\to A^*$, $G_V (a) (b) = V(a,b)$ is a purely Jordan generalized derivation. Furthermore, by Remark \ref{r module is the dual} there exists $\varphi \in A^*$ such that $$G_V (a\circ b) = G_V(a) \circ b + a\circ G_V(b) - U_{a,b} (\varphi),$$ for all $a,b\in A$. Then $$G_V (a\circ b) (c) = (G_V(a) \circ b + a\circ G_V(b) - U_{a,b} (\varphi))(c),$$ or equivalently,
$$V (a\circ b, c) = V(a, c\circ b) + V(b,a\circ c) - \varphi U_{a,b} (c),$$ or
$$\phi U_{a\circ b,c} (\xi) = \phi U_{a,c\circ b} (\xi) + \phi U_{b,a\circ c} (\xi) - \varphi U_{a,b} (c),$$ for all $a,b,c\in A$. If in the later expression we replace $c$ by the elements in a bounded increasing approximate unit  $(c_{\lambda})_{\lambda}$ in $A$, and apply that  $(c_{\lambda})_{\lambda}\to 1$ in the weak$^*$ topology of $A^{**}$, by taking weak$^*$ limits we have
\begin{equation}\label{eq key id lemma centre} \phi ( (a\circ b)\circ \xi) = \phi U_{a, b} (\xi) + \phi U_{b,a} (\xi) - \varphi (a\circ b),
\end{equation} for all $a,b\in A$. Replacing, $b$ with $c_{\lambda}$ and taking weak$^*$ limits we get
$$\phi ( a\circ \xi) = \phi ( 2 (a \circ \xi) ) - \varphi (a),$$ for all $a\in A$, witnessing that $\varphi (a) = \phi (a\circ \xi)$ for all $a\in A$. Combining this fact with \eqref{eq key id lemma centre} we prove that $\phi U_{a, b} (\xi) + \phi U_{b,a} (\xi) = 2 \phi ( (a\circ b)\circ \xi),$ for all $a,b\in A$. The arbitrariness of $\phi$ and the Hahn-Banach theorem prove that \begin{equation}\label{eq pre centre via U} 2 U_{a, b} (\xi)= U_{a, b} (\xi) +  U_{b,a} (\xi) = 2 (a\circ b)\circ \xi,
 \end{equation}for all $a,b\in A$. The weak$^*$-density of $A$ in $A^{**}$ (Goldstine's Theorem) and the separate weak$^*$-continuity of the product of $A^{**}$ (cf. \cite[Theorem 4.4.16 and Corollary 4.1.6]{HOS}) allow us to deduce that \eqref{eq pre centre via U} holds for all $a,b\in A^{**}$. The same is true for the hermitian and the skew symmetric part of $\xi$ in $A^{**}$, $\xi_1 = \frac{\xi+\xi^*}{2}$ and $\xi_2 = \frac{\xi-\xi^*}{2}$.\smallskip

An element $s$ in a unital JB$^*$-algebra is called a symmetry if $s^2 =1$. If we replace $a$ and $b$ with a symmetry $s\in A^{**}$ in the version of \eqref{eq pre centre via U} for $\xi_1,\xi_2$ in $A^{**}$, we see that $U_s (\xi_j) = (s\circ s)\circ \xi_j = \xi_j$ for $j=1,2$. Lemma 4.3.2 in \cite{HOS} asserts that $\xi_j$ lies in the centre of $A^{**}$ for all $j=1,2$, which concludes the proof.
\end{proof}

The lacking of a binary product in a JB$^*$-triple disables a natural notion of centre. The closest concept is the notion of centroid. Let $E$ be a JB$^*$-triple. 
The main motivation to introduce the centroid is the connection with the centralizer of a Banach space $X$. We recall that a \emph{multiplier} on $X$ is a bounded linear mapping $T:X\to X$ satisfying that each extreme point $\phi$ of the closed unit ball of $X^*$ is an eigenvector of the transposed mapping $T^*: X^*\to X^*$, that is, $T^*(\phi) = \lambda_{T} (\phi) \phi$, where $\lambda_{T} (\phi)$ is the corresponding eigenvalue (see \cite{Beh79}). The \emph{centralizer}, $C(X)$, of $X$ is the set of all multipliers $T$ on $X$ for which there exists another multiplier $S$ on $X$ satisfying $\lambda_{S} (\phi) =\overline{\lambda_{T} (\phi)}$. An attractive result by S. Dineen and R. Timoney proves that for each JB$^*$-triple $E$, the centroid of $E$ coincides with its centralizer, and it is precisely the set of all bounded linear operators on $E$ commuting with all Hermitian operators on $E$ (cf. \cite[Theorem 2.8 and Corollary 2.9]{DiTim88}). By \cite[Lemma 2.6]{DiTim88} for each $S\in Z(E)$ there exists a unique bounded linear operator $R: E\to E$ satisfying $\{a,S(b),c\} = R \{a,b,c\}$ for all $a,b,c\in E$. \smallskip

The \emph{centroid} of a JB$^*$-algebra $A$ is the set of all bounded linear operators $T$ on $A$ satisfying $$T(a\circ b) = T(a) \circ b, \hbox{ for all } a,b\in A.$$ The centroid of $A$ as JB$^*$-triple coincides with its centroid as JB$^*$-algebra, and if $A$ is unital a bounded linear operator $T$ lies in the centroid of $A$ if and only if $T = M_{z}$ for a unique element $z$ in the centre of $A$ (see \cite[Propositions 3.4 and 3.5]{DiTim88}).\smallskip

We shall gather next some other properties of the centroid. Let $T$ be an element in the centroid of a JB$^*$-triple $E$ and let $e$ be a tripotent in $E$. Since $T L(e,e) = L(e,e) T$, we can clearly conclude that $T(E_j (e))\subseteq E_j(e)$ for all $j=0,1,2$. In particular, $T|_{E_2(e)} : E_2(e) \to E_2(e)$ is a bounded linear operator, and given $a,b\in E_2(e)$ we can see that $$T(a\circ_e b) = T\{a,e,b\} = \{T(a), e, b\} = T(a)\circ_e b,$$ which proves that $T|_{E_2(e)}$ is an element in the centroid of the unital JB$^*$-algebra $E_2(e)$, therefore there exists an element $\xi_e$ in the centre of $E_2(e)$ such that $T(a) = \xi_e \circ_e a$ for all $a\in E_2(e)$.\label{centroid and Peirce 2-subspaces}

\section{Contractive projections and derivations}

Let us recall that linear maps on a JB$^*$-triple which are triple derivable at a point have not been described yet. So, linear maps which are triple derivable at orthogonal elements remain also unknown.\smallskip

A linear mapping $P$ on a Banach space $X$ is called a projection if $P^2 = P$. A projection $P$ on $X$ is contractive if $\|P\|=1$. The image of a contractive projection on a C$^*$-algebra need not be, in general, a C$^*$-algebra; for example, for a rank-one projection $p$ in $B(H)$, the mapping $P(x) = px$ is a contractive projection whose image is isometrically isomorphic to the Hilbert space $H$, which clearly is not a C$^*$-algebra whenever $H$ is infinite dimensional. However, for each contractive projection $P$ on a JB$^*$-triple $E$, its image, $P(E),$ is always a JB$^*$-triple with respect to the inherited norm and the triple product given by \begin{equation}\label{eq triple product contractive projection} \{x,y,z\}_{_P} :=P\{x,y,z\}, \ \ (x,y,z\in P(E))
\end{equation} (cf. \cite{stacho1982projection}, \cite[Theorem]{kaup1984contractive} and \cite{friedman1985solution}). The triple product given in \eqref{eq triple product contractive projection} will be denoted by $\{.,.,.\}_{_P}$, to avoid confusion the JB$^*$-triple $P(E)$ will be, in general, denoted by $(P(E),\{.,.,.\}_{_P})$. The image of a contractive projection need not be a JB$^*$-subtriple of $E$ \cite[Example 3]{kaup1984contractive}, however, the JB$^*$-triple $P(E)$ is isometrically isomorphic to a closed subtriple of $E^{**}$ \cite[Theorem 2]{FriRu87conditional}.\smallskip

Let $P: E\to E$ be a contractive projection on a JB$^*$-triple. The next identity was established by W. Kaup in \cite[Identity $(4)$ in page 97]{kaup1984contractive} \begin{equation}\label{eq Kaup contractive projection identity} P \{P(a), b, P (c) \} = P  \{P(a), P (b), P (c) \}, \ \ \ (a,b,c\in E).
 \end{equation} Y. Friedman and B. Russo complemented Kaup's result by showing that the identity \begin{equation}\label{eq Friedman Russo contractive projection identity} P \{P(a), P (b), c \} = P \{P(a), P (b), P (c) \},
  \end{equation} holds for all $a,b,c\in E$ (see \cite[Theorem 3]{FriRu87conditional}). It should be noted that when $P(E)$ is already a JB$^*$-subtriple of $E$ (for example, when $P= P_j(e)$ for some tripotent $e\in E$ and $j\in \{0,1,2\}$), the right hand sides in \eqref{eq Kaup contractive projection identity} and \eqref{eq Friedman Russo contractive projection identity} can be replaced with $ \{P(a), P (b), P (c) \}$.

\begin{lemma}\label{l 1} Let $\delta: E\to E$ be a triple derivation on a JB$^*$-triple. Suppose $P: E\to E$ is a contractive projection and $P(E)$ is a subtriple of $E$. Then the mapping $D=  P \delta|_{P(E)} : P(E) \to P(E)$ is a triple derivation on the JB$^*$-triple $(P(E),\{.,.,.\}_{_P})=(P(E),\{.,.,.\})$.
\end{lemma}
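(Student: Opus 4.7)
The plan is to fix $a,b,c \in P(E)$ and chase the definition of the induced triple product on $P(E)$, exploiting both Kaup's identity \eqref{eq Kaup contractive projection identity} and Friedman--Russo's identity \eqref{eq Friedman Russo contractive projection identity}, together with the hypothesis that $P(E)$ is already a JB$^*$-subtriple of $E$. Because $P(E)$ is a subtriple and $a=P(a)$, $b=P(b)$, $c=P(c)$, we have $\{a,b,c\}\in P(E)$ and hence $\{a,b,c\}_P = P\{a,b,c\}=\{a,b,c\}$, so the left-hand side to be computed is
\[
D\{a,b,c\}_P \;=\; P\,\delta\{a,b,c\} \;=\; P\{\delta(a),b,c\}+ P\{a,\delta(b),c\} + P\{a,b,\delta(c)\},
\]
using the triple derivation property of $\delta$.

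The key step is to insert a $P$ in front of $\delta$ in each of the three terms. For the outer terms I would use the Friedman--Russo identity \eqref{eq Friedman Russo contractive projection identity}, which in the equivalent form $P\{P(x),P(y),z\}=P\{P(x),P(y),P(z)\}$ (after using symmetry of the triple product in the outer slots) gives
\[
P\{\delta(a),b,c\} = P\{c,b,\delta(a)\} = P\{c,b,P\delta(a)\} = P\{P\delta(a),b,c\},
\]
and similarly $P\{a,b,\delta(c)\}=P\{a,b,P\delta(c)\}$. For the middle term I would apply Kaup's identity \eqref{eq Kaup contractive projection identity}, which yields
\[
P\{a,\delta(b),c\} \;=\; P\{P(a),\delta(b),P(c)\} \;=\; P\{P(a),P\delta(b),P(c)\} \;=\; P\{a,P\delta(b),c\}.
\]

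Once this is done, each term inside $P$ is a triple product of three elements of $P(E)$, namely $P\delta(a)=D(a)$, $P\delta(b)=D(b)$, $P\delta(c)=D(c)$ together with $a,b,c$. Because $P(E)$ is a subtriple, such triple products already lie in $P(E)$ and the outer $P$ acts as the identity on them, and simultaneously they coincide with the corresponding $\{\cdot,\cdot,\cdot\}_P$ products. Therefore
\[
D\{a,b,c\}_P = \{D(a),b,c\}_P + \{a,D(b),c\}_P + \{a,b,D(c)\}_P,
\]
which is exactly the statement that $D$ is a triple derivation on $(P(E),\{.,.,.\}_P)=(P(E),\{.,.,.\})$.

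I do not expect any serious obstacle: the entire argument is a careful bookkeeping of Kaup's and Friedman--Russo's projection identities applied to each of the three slots of the Leibniz rule. The only point requiring mild attention is to use symmetry of the triple product in its outer variables to bring the Friedman--Russo identity into the form needed for the first and third terms; after that, the assumption that $P(E)$ is a subtriple lets us drop the outermost $P$ and identify $P\{\cdot,\cdot,\cdot\}$ with $\{\cdot,\cdot,\cdot\}_P$ inside $P(E)$.
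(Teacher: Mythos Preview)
Your argument is correct and follows essentially the same route as the paper's proof: expand $P\delta\{a,b,c\}$ via the Leibniz rule, then use the Kaup identity \eqref{eq Kaup contractive projection identity} for the middle slot and the Friedman--Russo identity \eqref{eq Friedman Russo contractive projection identity} (together with outer symmetry) for the first and third slots to pull $P$ inside, and finally drop the outermost $P$ using that $P(E)$ is a subtriple. The only difference is that you spell out explicitly which identity handles which slot, whereas the paper compresses this into a single line.
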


\begin{proof} Given $a,b,c\in P(E)$ we have $$\begin{aligned} D\{a,b,c\}_{_P} &= P \delta \{a,b,c\} = P (\{\delta (a),b,c\}+ \{a,\delta (b),c\} + \{a,b,\delta (c)\})\\
  &= \{P \delta (a),b,c\}+ \{a,P\delta (b),c\} + \{a,b,P\delta (c)\} \\
  &= \{D (a),b,c\}+ \{a,D (b),c\} + \{a,b,D (c)\}\\
  &= \{D (a),b,c\}_{_P}+ \{a,D (b),c\}_{_P} + \{a,b,D (c)\}_{_P},
  \end{aligned}$$ where we have applied \eqref{eq Kaup contractive projection identity}, \eqref{eq Friedman Russo contractive projection identity} and the fact that $a,b,c\in P(E)$, the latter being a JB$^*$-subtriple of $E$.
\end{proof}

Suppose $\delta$ is a triple derivation on a JB$^*$-triple $E$. For each tripotent $e$ in $E$, the Peirce projection $P_j(e)$ is a contractive projection for every $j\in \{0,1,2\}$. It is further known that the projection $P_2(e) + P_0(e)$ is contractive too (see \cite[Corollary 1.2]{FriRu85}). Moreover, in this case, the corresponding images of these contractive projections, i.e. the Peirce subspaces $E_j(e) = P_j(e) (E),$ $j=0,1,2$ and $E_2(e)\oplus E_0(e)$, are JB$^*$-subtriples of $E$. It follows from Lemma \ref{l 1} that $D_j^e = P_j(e) \delta|_{E_j(e)} : E_j(e) \to E_j(e)$ and $D_{0,2}^e = (P_0(e) + P_2(e)) \delta|_{E_0(e)\oplus E_2(e)} : E_0(e)\oplus E_2(e) \to E_0(e)\oplus E_2(e)$ are triple derivations. \smallskip

What about the reciprocal implication? That is, suppose $T: E\to E$ is a (bounded) linear operator on a JB$^*$-triple satisfying that for each tripotent $e\in E$, the mapping $D_{0,2}^e = (P_0(e) + P_2(e)) T|_{E_0(e)\oplus E_2(e)} : E_0(e)\oplus E_2(e) \to E_0(e)\oplus E_2(e)$ is a triple derivation. Is $T$ a triple derivation?

\begin{proposition}\label{p first characterization of triple derivations} Let $T:E \to E$ be a bounded linear mapping on a JB$^*$-triple. Suppose that the set of tripotents is norm-total in $E$, that is, every element in $E$ can be approximated in norm by a finite linear combination of mutually orthogonal tripotents. Then the following statements are equivalent:\begin{enumerate}[$(a)$]\item $T$ is a triple derivation;
\item For each tripotent $e$ in $E$ the mapping $D_{0,2}^e = (P_0(e) + P_2(e)) \delta|_{E_0(e)\oplus E_2(e)} : E_0(e)\oplus E_2(e) \to E_0(e)\oplus E_2(e)$ is a triple derivation.
\item For each tripotent $e$ in $E$ the mapping $D_{2}^e=  P_2(e) T|_{E_2(e)} : E_2(e) \to E_2(e)$ is a triple derivation and $P_0(e) T(e)=0$.
\end{enumerate}
\end{proposition}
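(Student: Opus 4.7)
The cycle I would run is $(a)\!\Rightarrow\!(b)\!\Rightarrow\!(c)\!\Rightarrow\!(a)$; the first two implications are short, and $(c)\!\Rightarrow\!(a)$ is where the work is.

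For $(a)\!\Rightarrow\!(b)$, I would apply Lemma~\ref{l 1} to the contractive projection $P_0(e)+P_2(e)$, whose image $E_0(e)\oplus E_2(e)$ is a JB$^*$-subtriple of $E$. For $(b)\!\Rightarrow\!(c)$, I would restrict the derivation identity for $D_{0,2}^e$ to inputs $a,b,c\in E_2(e)\subseteq E_0(e)\oplus E_2(e)$ and use the Peirce $0$-$2$ rules $L(E_2(e),E_0(e))=L(E_0(e),E_2(e))=0$, together with symmetry of the triple product in the outer entries, to annihilate every term involving a $P_0(e)T(\cdot)$-factor paired with $E_2(e)$-inputs (the middle-slot terms vanish by the same rule). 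The identity then collapses to one inside $E_2(e)$; matching $E_0$- and $E_2$-components gives $P_0(e)T(\{a,b,c\})=0$, which at $a=b=c=e$ yields $P_0(e)T(e)=0$, together with $D_2^e$ being a triple derivation.

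For $(c)\!\Rightarrow\!(a)$, let $G(x,y,z):=T\{x,y,z\}-\{Tx,y,z\}-\{x,Ty,z\}-\{x,y,Tz\}$. By continuity, trilinearity (conjugate in the middle), and norm-totality of tripotents it suffices to prove $G(e,f,g)=0$ for every triple of tripotents $e,f,g\in E$. My first step would be to upgrade $(c)$ to the vanishings $P_0(u)T|_{E_2(u)}=0$ and $P_2(u)T|_{E_0(u)}=0$ for every tripotent $u$. For the first: any tripotent $f\in E_2(u)$ satisfies $E_0(u)\subseteq E_0(f)$ (from $\{f,f,z\}=\{z,f,f\}=L(z,f)f=0$ when $z\in E_0(u)$, via $L(E_0(u),E_2(u))=0$), and similar Peirce arithmetic gives $E_1(f),E_2(f)\subseteq E_1(u)\oplus E_2(u)$; decomposing $T(f)$ via the Peirce projections of $f$ and invoking $P_0(f)T(f)=0$ then forces $P_0(u)T(f)=0$. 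Density of linear combinations of tripotents in the JB$^*$-algebra $E_2(u)$ (through the commutative $C_0(L)$-structure of one-element subtriples) and continuity of $P_0(u)T$ extend the vanishing to all of $E_2(u)$. The symmetric statement for $E_0(u)$ is analogous, using that a tripotent $w\in E_0(u)$ is orthogonal to $u$ and invoking the joint Peirce calculus $E_2(u)\cap E_j(w)=0$ for $j\in\{1,2\}$ together with $E_2(w)\subseteq E_0(u)$.

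These two vanishings amount to the block structure $T(E_2(u))\subseteq E_1(u)\oplus E_2(u)$ and $T(E_0(u))\subseteq E_0(u)\oplus E_1(u)$. A routine Peirce computation then reduces $G(e,f,g)=0$ for $e,f,g\in E_2(u)$ to a residual identity $P_1(u)T\{e,f,g\}=\{P_1(u)T(e),f,g\}+\{e,f,P_1(u)T(g)\}$ inside $E_1(u)$ (the $E_2(u)$-part of $G$ is killed by $D_2^u$ being a triple derivation, and the $E_0(u)$-part by the block structure). To treat arbitrary triples of tripotents in $E$ which need not sit in a common $E_2(u)$, I would pass to the bidual JBW$^*$-triple $E^{**}$ and the bitranspose $T^{**}$, where tripotents are weak$^*$-total, range tripotents exist, and the strengthened hypothesis lifts by weak$^*$-density; the weak$^*$-continuity of $T^{**}$ and of the triple product then returns the conclusion to $E$. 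The main obstacle is precisely the residual Peirce-$1$ cross-identity above: it is the only part of $G(e,f,g)$ not already killed by $D_2^u$ being a triple derivation, and verifying it cleanly seems to require cross-referencing hypothesis $(c)$ on tripotents other than $u$ (most naturally on $e,f,g$ themselves) rather than just on $u$.
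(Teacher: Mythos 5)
Your implications $(a)\Rightarrow(b)$ and $(b)\Rightarrow(c)$ are fine and essentially the paper's argument (Lemma~\ref{l 1} plus Peirce arithmetic; your direct computation in $(b)\Rightarrow(c)$ is a harmless variant). The problem is $(c)\Rightarrow(a)$: what you present is not a proof but a plan whose central step is left open, as you yourself concede. Concretely: (i) the ``residual Peirce-$1$ cross-identity'' $P_1(u)T\{e,f,g\}=\{P_1(u)T(e),f,g\}+\{e,f,P_1(u)T(g)\}$ is exactly the hard part and is never verified; (ii) the passage to $E^{**}$ is not available as stated, because hypothesis $(c)$ is assumed only for tripotents of $E$, and there is no argument that $P_0(e)T^{**}(e)=0$ and the derivation property of $P_2(e)T^{**}|_{E^{**}_2(e)}$ persist for tripotents $e\in E^{**}$ --- weak$^*$-density of $E$ in $E^{**}$ does not ``lift'' a hypothesis indexed by tripotents; (iii) even the preliminary upgrades have gaps: the span of tripotents of the subtriple $E_0(u)$ need not be dense in $E_0(u)$ under the stated hypothesis (norm-totality is assumed for $E$, not inherited by Peirce subspaces), and your $C_0(L)$-argument for $E_2(u)$ is also off, since the subtriple generated by a single element may contain no nonzero tripotent at all (e.g.\ $C_0((0,1])$); for $E_2(u)$ one should instead use that every self-adjoint element of the unital JB$^*$-algebra $E_2(u)$ is a real combination of unitaries.

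The idea you are missing, and which the paper uses, is that one does not need $G(e,f,g)=0$ for arbitrary triples of tripotents: it suffices to prove the diagonal identity $T\{a,a,a\}=2\{T(a),a,a\}+\{a,T(a),a\}$ for all $a\in E$ and then polarize. Hypothesis $(c)$ gives \eqref{eq T is a triple der on tripotents}, i.e.\ $T(e)=2\{T(e),e,e\}+\{e,T(e),e\}$ for every tripotent $e$. For an algebraic element $a=\sum_j\lambda_j e_j$ with the $e_j$ mutually orthogonal, all cross terms are controlled by orthogonality: $P_0(e_j)T(e_j)=0$ and Peirce arithmetic give $\{e_i,T(e_j),e_k\}=0$ for $i,k\neq j$ (this is \eqref{eq both extremes orthogonal to the middle}), and applying \eqref{eq T is a triple der on tripotents} to the tripotents $e_i\pm e_j$ yields $\{T(e_i),e_j,e_j\}+\{e_i,T(e_j),e_j\}=0$ (this is \eqref{eq consequence 2 for two orthogonal trip}). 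These two relations kill every mixed term, so the diagonal identity holds on algebraic elements; norm-totality of tripotents (which is exactly where the ``mutually orthogonal'' form of the hypothesis is used) and continuity of $T$ extend it to all of $E$, and a standard polarization then shows $T$ is a triple derivation --- no bidual argument and no Peirce-$1$ analysis are needed.
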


\begin{proof} The implication $(a)\Rightarrow (b)$ follows from Lemma \ref{l 1}.\smallskip

$(b)\Rightarrow (c)$ Let $e$ be a tripotent in $E$. By our hypotheses, $D_{0,2}^e$ is a triple derivation on $ E_0(e)\oplus E_2(e)$. It then follows that $$ D_{0,2}^e (e) = D_{0,2}^e \{e,e,e\} = 2 \{D_{0,2}^e(e),e,e\}+ \{e, D_{0,2}^e(e), e\} \in E_1(e)\oplus E_2(e),$$ where in the final step we applied Peirce arithmetic. Consequently, $$P_0(e) T(e)= P_0(e) D_{0,2}^e(e) =0.$$ Since $D_{0,2}^e(e)$ is a triple derivation on $E_0(e)\oplus E_2(e)$, a new application of Lemma \ref{l 1} implies that $P_2(e) T|_{E_2(e)} = P_2(e) D_{0,2}^e |_{E_2(e)} $ is a triple derivation on $E_2(e)$.\smallskip

$(c)\Rightarrow (a)$ By hypothesis, \begin{equation}\label{eq the Peirce 0 part vanishes} P_0(e) T(e)= 0,\end{equation} and the mapping $D_{2}^e=  P_2(e) T|_{E_2(e)}$ is a triple derivation. Therefore, $$P_2(e) T (e) = P_2(e) T\{e,e,e\} = 2 \{  P_2(e) T(e),e,e\} +  \{e,P_2(e) T(e),e\},$$ and thus $$\begin{aligned}
2\{T(e),e,e\} &+\{e,T(e),e\} = 2\{P_2(e) T(e),e,e\} + 2 \{P_1(e) T(e),e,e\} \\
&+\{e,P_2(e) T(e),e\}\\
& = P_1(e)T(e) + 2\{P_2(e) T(e),e,e\} + \{e,P_2(e) T(e),e\} \\
&= P_1(e) T(e) + P_2(e) T(e) = T(e).
\end{aligned}$$
We have therefore shown that \begin{equation}\label{eq T is a triple der on tripotents} T(e) = T\{e,e,e\} = 2 \{T(e),e,e\} + \{e,T(e),e\},
 \end{equation} for all tripotent $e\in E$. We can reproduce now an argument taken from the proof of \cite[Theorem 2.4]{BurFerPe2014}, to get the desired statement. If we consider a finite linear combination of mutually orthogonal tripotents $e_1,\ldots,e_n$, we deduce from the above that \begin{equation}\label{eq LHS at algebriac elements} T\left\{\sum_{j=1}^n \lambda_j e_j,\sum_{j=1}^n \lambda_j e_j, \sum_{j=1}^n \lambda_j e_j\right\}  =  \sum_{j=1}^n |\lambda_j|^2 \lambda_j T\{e_j,e_j,e_j\}
 \end{equation}
$$= \sum_{j=1}^n |\lambda_j|^2 \lambda_j (2 \{T(e_j),e_j,e_j\} + \{e_j,T(e_j),e_j\}).$$

On the other hand, if we fix three index $i,j,k\in \{1,\ldots,n\}$ with $i,k\neq j$, it follows from the fact that $e_i, e_k\in E_0 (e_j)$, \eqref{eq the Peirce 0 part vanishes} and Peirce arithmetic that \begin{equation}\label{eq both extremes orthogonal to the middle} \{e_i, T(e_j), e_k\} =0.
 \end{equation} Since $e_i \pm e_j$ is a tripotent, \eqref{eq T is a triple der on tripotents} implies that $$\begin{aligned}T(e_i) \pm T(e_j) &=  T(e_i \pm e_j) =  2 \{T(e_i \pm e_j),e_i \pm e_j,e_i \pm e_j\} \\
&+ \{e_i \pm e_j,T(e_i \pm e_j),e_i \pm e_j\}\\
& = 2 \{T(e_i),e_i ,e_i\} + 2 \{T(e_i), e_j, e_j\}\pm 2 \{T(e_j),e_i ,e_i\} \\
& \pm 2 \{T(e_j), e_j, e_j\} + \{e_i,T(e_i), e_i \} \pm \{e_i,T(e_i),  e_j\}\\
& + \{e_i ,T(e_j), e_j\} \pm \{e_j,T(e_i),e_i\} +\{e_j,T(e_j),e_i\} \pm \{e_j,T(e_j), e_j\}
\end{aligned}$$ which combined with \eqref{eq T is a triple der on tripotents} gives
\begin{align}\label{eq consequence 1 for two orthogonal trip} 0 &= 2 \{T(e_i), e_j, e_j\}\pm 2 \{T(e_j),e_i ,e_i\}  \pm \{e_i,T(e_i),  e_j\} + \{e_i ,T(e_j), e_j\} \\
&\pm \{e_j,T(e_i),e_i\} +\{e_j,T(e_j),e_i\} \nonumber,
\end{align} and then
\begin{align}\label{eq consequence 2 for two orthogonal trip} 0 &= \{T(e_i), e_j, e_j\} + \{e_i ,T(e_j), e_j\} .
\end{align}

Now, we check the following summands
\begin{equation}\label{eq RHS at algebriac elements} 2 \left\{T\left(\sum_{j=1}^n \lambda_j e_j\right),\sum_{j=1}^n \lambda_j e_j, \sum_{j=1}^n \lambda_j e_j\right\} + \left\{\sum_{j=1}^n \lambda_j e_j,T\left(\sum_{j=1}^n \lambda_j e_j\right), \sum_{j=1}^n \lambda_j e_j\right\}
\end{equation}
$$ = 2 \sum_{j=1, k=1}^n |\lambda_j|^2 \lambda_k \{T(e_k),e_j,e_j\} + \sum_{i=1, j=1, k=1}^n \lambda_i \overline{\lambda_j} \lambda_k \{e_i, T(e_j), e_k\} $$
$$=\hbox{(by \eqref{eq both extremes orthogonal to the middle})} = 2 \sum_{j=1, i=1}^n |\lambda_j|^2 \lambda_i \{T(e_i),e_j,e_j\} + \sum_{i=1, k=1, i\neq k}^n |\lambda_i|^2  \lambda_k \{e_i, T(e_i), e_k\} $$
$$ + \sum_{i=1, j=1, i\neq j}^n \lambda_i |\lambda_j|^2 \{e_i, T(e_j), e_j\} + \sum_{j=1}^n \lambda_j |\lambda_j|^2 \{e_j, T(e_j), e_j\}  $$
$$= \hbox{(by \eqref{eq consequence 2 for two orthogonal trip})} = 2 \sum_{j=1}^n |\lambda_j|^2 \lambda_j \{T(e_j),e_j,e_j\} + |\lambda_j|^2 \lambda_j \{T(e_j),e_j,e_j\}.$$

By combining \eqref{eq LHS at algebriac elements} and \eqref{eq RHS at algebriac elements} it ca be concluded that $$T\{a,a,a\} = 2 \{T(a),a,a\} + \{a, T(a),a\},$$ for every $a = \sum_{j=1}^n \lambda_j e_j$, where $e_1,\ldots, e_n$ are mutually orthogonal tripotents in $E$. Since, by hypotheses, tripotents in $E$ are norm-total, we get  $T\{a,a,a\} = 2 \{T(a),a,a\} + \{a, T(a),a\},$ for every $a\in E$. A standard polarization identity proves that $T$ is a triple derivation.
\end{proof}

\begin{remark}\label{r compact and JBWtriples satisfy the previous hypotheses}{\rm It should be noted here that every JBW$^*$-triple and every compact JB$^*$-triple satisfies the hypotheses of the previous proposition (see \cite[Lemma 3.11]{Horn87} and \cite{BuChu92}).}
\end{remark}

The novelty here is that, as we shall see in subsequent results, Lemmas \ref{l 1} and \ref{l 1 for linear maps triple derivable at orthogonal pairs} give a hint to study linear maps which are triple derivable at orthogonal pairs in terms of generalized Jordan derivations.

\begin{lemma}\label{l 1 for linear maps triple derivable at orthogonal pairs} Let $T: E\to E$ be a linear map on a JB$^*$-triple which is triple derivable at orthogonal pairs. Then for each contractive projection $P$ on $E$ with $P(E)$ being a subtriple of $E$, the mapping $PT|_{P(E)}: P(E)\to P(E)$ is triple derivable at orthogonal pairs. In particular, for each tripotent $e\in E$, the mappings $P_j(e) T|_{E_j(e)}: E_j(e) \to E_j (e)$ {\rm(}$j=0,1,2${\rm)}  and $(P_0(e)+P_2(e)) T|_{E_0(e)\oplus E_2(e)}: E_0(e)\oplus E_2(e) \to E_0(e)\oplus E_2(e)$ are triple derivable at orthogonal pairs.
\end{lemma}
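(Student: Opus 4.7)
The plan is to transport the triple-derivable-at-orthogonal-pairs condition for $T$ across the contractive projection $P$ by invoking Kaup's identity \eqref{eq Kaup contractive projection identity} and the Friedman-Russo identity \eqref{eq Friedman Russo contractive projection identity}. Fix $a,b,c\in P(E)$ with $a\perp b$. Because $P(E)$ is a JB$^*$-subtriple of $E$, the induced triple product $\{\cdot,\cdot,\cdot\}_P$ on $P(E)$ agrees with the restriction of the ambient triple product; in particular orthogonality of $a$ and $b$ inside $P(E)$ is the same as orthogonality inside $E$, and every triple product of three elements of $P(E)$ already lies in $P(E)$, so that $P$ acts as the identity on such triple products.

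The main computation is to apply $P$ to the identity
$$0 \;=\; \{T(a),b,c\} + \{a,T(b),c\} + \{a,b,T(c)\},$$
which holds in $E$ by the hypothesis on $T$, and then to convert each summand into the corresponding term with an internal $PT$. Since $Pa=a$, $Pb=b$ and $Pc=c$, each triple product on the right-hand side has two slots that are already $P$-fixed. The symmetric form of \eqref{eq Friedman Russo contractive projection identity} (obtained from the symmetry of the triple product in its first and third entries) then gives $P\{T(a),b,c\}=P\{PT(a),b,c\}$, the Friedman-Russo identity itself gives $P\{a,b,T(c)\}=P\{a,b,PT(c)\}$, and Kaup's identity \eqref{eq Kaup contractive projection identity} gives $P\{a,T(b),c\}=P\{a,PT(b),c\}$. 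Summing these three, and rewriting $P\{\cdot,\cdot,\cdot\}=\{\cdot,\cdot,\cdot\}_P$ on $P(E)$, produces exactly the identity asserting that $PT|_{P(E)}$ is triple derivable at the orthogonal pair $a\perp b$ in $(P(E),\{\cdot,\cdot,\cdot\}_P)$.

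The ``in particular'' clause then follows by quoting the facts recalled just before the statement: for every tripotent $e\in E$, each Peirce projection $P_j(e)$ ($j=0,1,2$) and the sum $P_0(e)+P_2(e)$ is a contractive projection whose image is a JB$^*$-subtriple of $E$ (cf.\ \cite[Corollary 1.2]{FriRu85}), so the general statement directly applies. I do not foresee a serious obstacle; the argument is essentially algebraic bookkeeping. The only point requiring care is to match the correct identity (Kaup versus Friedman-Russo) to the slot occupied by $T$, and this matching works cleanly precisely because the subtriple hypothesis on $P(E)$ is what allows the outer $P$ on the right-hand sides of \eqref{eq Kaup contractive projection identity} and \eqref{eq Friedman Russo contractive projection identity} to be dropped.
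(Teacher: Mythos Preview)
Your proof is correct and follows essentially the same route as the paper's own argument: apply $P$ to the identity $0=\{T(a),b,c\}+\{a,T(b),c\}+\{a,b,T(c)\}$ and then use the Kaup and Friedman--Russo identities \eqref{eq Kaup contractive projection identity}, \eqref{eq Friedman Russo contractive projection identity} (together with the subtriple hypothesis on $P(E)$) to push $P$ into the appropriate slot of each summand. Your explicit remark that orthogonality in the subtriple $P(E)$ coincides with orthogonality in $E$ is the one point the paper leaves implicit, and your matching of Kaup's identity to the middle slot and Friedman--Russo's to the outer slots is exactly right.
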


\begin{proof} The ideas are very similar to those given in the proof of Lemma \ref{l 1}. Given $a,b,c\in P(E)$ with $a\perp b$ we have $$\begin{aligned} 0=P T\{a,b,c\}_{_P} &= P T \{a,b,c\} = P (\{T (a),b,c\}+ \{a,T (b),c\} + \{a,b,T (c)\}\\
&= \{P T (a),b,c\}+ \{a,PT (b),c\} + \{a,b,PT (c)\}\\
  &= \{P T (a),b,c\}_{_P}+ \{a,PT (b),c\}_{_P} + \{a,b,PT (c)\}_{_P}.
  \end{aligned}$$ where we have applied \eqref{eq Kaup contractive projection identity}, \eqref{eq Friedman Russo contractive projection identity}, the fact that $a,b,c\in P(E)$, and the assumption that $P(E)$ is a JB$^*$-subtriple of $E$.
\end{proof}

\section{Generalized Jordan derivations via orthogonal forms}

Lemma \ref{l 1 for linear maps triple derivable at orthogonal pairs} justifies the importance of determining the structure of those bounded linear maps on a JB$^*$-algebra which are triple derivable at orthogonal pairs. The Jordan analogues of the results in \cite[\S 3]{BurFerPe2014} and \cite[\S 2]{AyuKudPe2014} remain unexplored. The aim of this section is to complete our knowledge on Jordan and triple derivations by studying those linear maps on JB$^*$-algebras preserving the natural relations with respect to orthogonality that triple derivations enjoy.\smallskip

As we have already mentioned, given a triple derivation $\delta$ on a JB$^*$-triple $E$ and elements $a,b,c\in E$ with $a,c\perp b$ we have $$0 = \delta\{a,b,c\} = \{\delta(a), b,c\} + \{ a, \delta(b), c\} +\{ a,b,\delta (c)\} =  \{ a, \delta(b), c\}.$$ Similarly, if $T:E\to E$ is a linear mapping which is triple derivable at orthogonal elements we deduce that $T$ satisfies the following property \begin{equation}\label{eq orthogonal property}\hbox{$\{ a, T(b), c\} =0,$ for all $a,b,c\in E$ with $a,c\perp b$.}\end{equation} So, as in the case of bounded linear maps on C$^*$-algebras, it seems interesting to study those bounded linear maps on $E$ satisfying the just commented property \eqref{eq orthogonal property}. We note that property \eqref{eq orthogonal property} is related to the property studied in \cite[Lemma 2.8]{EssPe2018} for linear maps triple derivable at zero.\smallskip

As in the associative case, for each mapping $T$ on a JB$^*$-algebra $A$, we define a mapping $T^{\sharp} : A\to A$ given by $T^{\sharp} (x) := T(x^*)^*$. Clearly $T$ is (bounded) linear if and only if the same property holds for $T^{\sharp}$. The mapping $T$ will be called symmetric (respectively, anti-symmetric) if $T^{\sharp} = T$ (respectively, $T^{\sharp} = -T$). Every mapping $T$ can be written as the sum of a symmetric and an anti-symmetric mapping and a linear combination of two symmetric mappings, $T = \frac12 (T^{\sharp} + T) + \frac12 (T-T^{\sharp}) =  \frac12 (T^{\sharp} + T) + i  \frac{1}{2 i} (T-T^{\sharp})$.\smallskip

\begin{lemma}\label{l properies of the conjugate} Let $T$ be a linear mapping on a JB$^*$-algebra. The following statements hold:
\begin{enumerate}[$(a)$]
\item $T$ is triple derivable at orthogonal elements if and only if $T^{\sharp}$ satisfies the same property;
\item The real linear combination of linear mappings which are triple derivable at orthogonal elements also satisfies this property;
\item $T$ is triple derivable at orthogonal elements if and only if $\frac12 (T^{\sharp} + T)$ and $\frac12 (T-T^{\sharp})$ are triple derivable at orthogonal elements.
\end{enumerate}
\end{lemma}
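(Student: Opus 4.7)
The approach hinges on one preliminary identity: the involution $*$ on a JB$^*$-algebra $A$ is a conjugate-linear triple automorphism of the underlying JB$^*$-triple. Indeed, expanding \eqref{eq product jordan} and using that the involution is multiplicative on the Jordan product, one obtains $\{x,y,z\}^* = \{x^*,y^*,z^*\}$ for all $x,y,z\in A$. As an immediate corollary, orthogonality is preserved by the involution: $a\perp b$ if and only if $a^*\perp b^*$.

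For (a), I would fix $a,b,c\in A$ with $a\perp b$ and substitute $T^{\sharp}(\cdot) = T((\cdot)^*)^*$ into the three summands $\{T^{\sharp}(a),b,c\}$, $\{a,T^{\sharp}(b),c\}$, $\{a,b,T^{\sharp}(c)\}$, and then factor a global involution out via $\{x,y,z\}^* = \{x^*,y^*,z^*\}$. The resulting expression is the conjugate of $\{T(a^*),b^*,c^*\} + \{a^*, T(b^*), c^*\} + \{a^*, b^*, T(c^*)\}$, which vanishes because $a^*\perp b^*$ and $T$ is triple derivable at orthogonal elements. The converse follows from $(T^{\sharp})^{\sharp} = T$.

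Assertion (b) is a direct linearity check: for $T_1, T_2$ triple derivable at orthogonal elements and $\lambda,\mu\in\mathbb{R}$, expanding the three summands defining the triple-derivability condition for $\lambda T_1 + \mu T_2$ produces a real linear combination of the vanishing sums associated with $T_1$ and $T_2$. It is precisely here that the reality of the scalars is needed: the middle argument of the triple product is conjugate linear, so a complex scalar $\alpha$ would contribute $\overline{\alpha}$ in the middle summand, leaving a discrepancy of the form $(\overline{\alpha}-\alpha)\{a,T_j(b),c\}$ that need not vanish.

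Assertion (c) is then immediate from (a) and (b) via the decomposition $T = \tfrac12(T+T^{\sharp}) + \tfrac12(T-T^{\sharp})$, which is a real linear combination of $T$ and $T^{\sharp}$. I do not foresee any genuine obstacle; the proof is essentially a bookkeeping exercise reconciling the involution, the Jordan triple product, and the orthogonality relation, with the only subtle point being the need for real scalars in (b).
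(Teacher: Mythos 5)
Your proposal is correct and follows essentially the same route as the paper: part (a) is proved by passing to $a^*\perp b^*$ and applying the involution via $\{x,y,z\}^*=\{x^*,y^*,z^*\}$, with the converse from $T^{\sharp\sharp}=T$, while (b) is the direct real-linearity check (the paper leaves it as "straightforward") and (c) follows from (a) and (b). Your explicit remark on why the scalars must be real, owing to the conjugate linearity of the middle variable, is a correct and welcome detail the paper omits.
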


\begin{proof} $(a)$ Suppose $T$ is triple derivable at orthogonal elements. Take $a,b,c\in A$ with $a\perp b$. Since $a^*\perp b^*$, it follows from the hypothesis that $$0= \{T(a^*), b^*, c^*\}+ \{a^*, T(b^*), c^*\}+ \{a^*, b^*, T(c^*)\}.$$ By applying the involution on $A$ we obtain $$0= \{T(a^*)^*, b, c\}+ \{a, T(b^*)^*, c\}+ \{a, b, T(c^*)^*\}$$ $$= \{T^{\sharp}(a), b, c\}+ \{a, T^{\sharp}(b), c\}+ \{a, b, T^{\sharp}(c)\}.$$ Since $T^{\sharp\sharp} =T$, the ``if'' implication also follows from the ``only if'' one.\smallskip

$(b)$ can be straightforwardly checked, and $(c)$ follows from $(a)$ and $(b)$.

%
\end{proof}

Let $G: A\to X$ be a linear mapping where $A$ is a JB$^*$-algebra and $X$ is a Jordan-Banach $A$-module. Following \cite[\S 4]{AlBreExVill09} and \cite[\S 3]{BurFerPe2014}, we shall say that $G$ is a \emph{generalized Jordan derivation} if there exists $\xi\in X^{**}$ satisfying \begin{equation}\label{eq generalized Jordan derivation} G (a\circ b) = G(a)\circ b + a\circ G(b) - U_{a,b} (\xi ), \end{equation} for every $a,b$ in $A$. If $A$ is unital the element $\xi= G(1)$ lies in $X$. An example can be given as follows, fix a non-zero $a$ in $A$. The mapping $M_a$ is a generalized Jordan derivation on $A$ which is not a Jordan derivation because $M_a (1) = a \neq 0$. If $a\neq -a^*$, the mapping $M_a$ neither is a triple derivation because $M_a(1) = a \neq -a^* = -M_a(1)^*$ (cf. \cite[Lemma 1 and its proof]{HoMarPeRu}).\smallskip

\begin{remark}\label{r module is the dual}{\rm In our case, we shall be mainly interested in generalized Jordan derivations $G: A\to A^*$, where the Jordan module operation in $A^*$ is given by $(\varphi \circ a) (b) =\varphi (a\circ b)$ ($a,b\in A$). Suppose $G:A\to A^*$ is a generalized derivation (which is automatically continuous by \cite[Proposition 2.1]{JamPeSidd2015}) for some $\xi\in A^{***}$. Let $(c_{\lambda})_{\lambda}$ be a bounded increasing approximate unit in $A$ (see \cite[Proposition 3.5.4]{HOS}). The net $(G (a\circ c_{\lambda}))_{\lambda}\to G(a)$ in norm, and $(G(a)\circ c_{\lambda})_{\lambda}\to G(a)$ in the weak$^*$ topology of $A^*$. The net $(G(c_{\lambda}))_{\lambda}$ is bounded in a dual Banach space, and hence we can find a subnet (denoted by the same symbol) converging in the weak$^*$ topology of $A^*$ to some $\phi\in A^*$. Therefore, $(a\circ G(c_{\lambda}))_{\lambda}\to a\circ \phi$ in the weak$^*$ topology. On the other hand, for each $d\in A$, the net $U_{a,c_{\lambda}} (\xi ) (d) = \xi (U_{a,c_{\lambda}} (d))\to \xi (a\circ d)$, as $\|U_{a,c_{\lambda}} (d)-a\circ d\|\to 0$. Finally, by combining all these facts with \eqref{eq generalized Jordan derivation} we get $$ G(a) (d) = G(a) (d) + \phi (a\circ d) -  \xi (a\circ d),$$ for all $a,d\in A$, which clearly guarantees that we can take $\xi =\phi \in A^*$.}
\end{remark}

A \emph{purely Jordan generalized derivation} from $A$ into $A^*$ is a generalized Jordan derivation $G: A \to A^*$ satisfying $G(a) (b) = G(b) (a)$, for every $a,b\in A$ (cf. \cite[Definition 3.4]{JamPeSidd2015}), while a Jordan derivation $D$ from $A$ into $A^*$ is said to be a \emph{Lie Jordan derivation} if $D(a) (b) =- D(b) (a)$, for every $a,b\in A$ (cf. \cite[Definition 3.12]{JamPeSidd2015}).

\smallskip

The results in \cite{JamPeSidd2015} reveal the strong connections between generalized Jordan derivations and orthogonal forms on JB$^*$-algebras. Let $A$ be a JB$^*$-algebra. We recall that a continuous bilinear form $V : A\times A \to \mathbb{C}$ is called \emph{orthogonal} if $V (a,b^*) = 0,$ for every $a,b\in A$ with $a\perp b$. If $V (a,b) = 0$ only for elements $a,b\in A_{sa}$ with $a\perp b$, we say that $V$ is \emph{orthogonal on $A_{sa}$} (cf. \cite{JamPeSidd2015}). Corollary 3.14 (see also Propositions 3.8 and 3.9) in \cite{JamPeSidd2015} proves that a bilinear form on a JB$^*$-algebra ${A}$ is orthogonal if and only if it is orthogonal on $A_{sa}.$ A bilinear form $V$ on $A$ is called symmetric (respectively, anti-symmetric) if $V(a,b) = V(b,a)$ (respectively, $V(a,b) = -V(b,a)$) for all $a,b\in A$.\smallskip

Let $\mathcal{OF}_{s}(A)$ (respectively, $\mathcal{OF}_{as}(A)$) denote the space of all symmetric orthogonal forms on $A$ (respectively, of all anti-symmetric orthogonal forms on $A$), and let $\mathcal{PJGD}er(A,A^*)$ and $\mathcal{L}ie\mathcal{JD}er(A,A^*)$ stand for the spaces of all purely Jordan generalized derivations from $A$ into $A^*$ and of all Lie Jordan derivations from $A$ into $A^*$, respectively.\smallskip

For each $V\in \mathcal{OF}_{s}(A)$ define $G_{_V} : A \to A^*$ in $\mathcal{PJGD}er(A,A^*)$ given by $G_{_V} (a) (b) = V(a,b)$, and for each $G\in \mathcal{PJGD}er(A,A^*)$ we set $V_{_G} : A\times A \to C$, $V_{_G} (a,b) := G(a) (b)$ $(a,b\in A)$.
By \cite[Theorem 3.6]{JamPeSidd2015}, the mappings $$\mathcal{OF}_{s}(A) \to \mathcal{PJGD}\hbox{er}(A,A^*), \ \ \mathcal{PJGD}\hbox{er}(A,A^*) \to \mathcal{OF}_{s}(A),$$ $$V\mapsto G_{_V}, \ \ \ \  \ \ \ \  \ \ \ \  \ \ \ \  \ \ \ \  \ \ \ \  \ \ \ \ \ \ \ \  \ \ \ \ G\mapsto V_{_G},$$ define two (isometric) linear bijections which are inverses of each other.\smallskip

For each $V\in \mathcal{OF}_{as}(A)$ we define $D_{_V} : A \to A^*$ in $\mathcal{L}ie\mathcal{JD}er(A,A^*)$ given by $D_{_V} (a) (b) = V(a,b)$, and for each $D\in \mathcal{L}ie\mathcal{JD}er(A,A^*)$ we set $V_{_D} : A\times A \to \mathbb{C}$, $V_{_D} (a,b) := D(a) (b)$ $(a,b\in A)$.
By \cite[Theorem 3.13]{JamPeSidd2015}, the mappings
$$\mathcal{OF}_{as}(A) \to \mathcal{L}\hbox{ie}\mathcal{JD}\hbox{er}(A,A^*), \ \ \mathcal{L}\hbox{ie}\mathcal{JD}\hbox{er}(A,A^*) \to \mathcal{OF}_{as}(A),$$ $$V\mapsto D_{_V}, \ \ \ \  \ \ \ \  \ \ \ \  \ \ \ \  \ \ \ \  \ \ \ \  \ \ \ \ \ \ \ \  \ \ \ \ D\mapsto V_{_D},$$ define two linear bijections which are inverses of each other.\smallskip

We deal first with symmetric bounded linear maps which are triple derivable at orthogonal pairs.

\begin{proposition}\label{p symmetric operators} Let $T: A\to A$ be a symmetric bounded linear operator on a JB$^*$-algebra. Suppose that $T$ is triple derivable at orthogonal pairs. Then $T$ is a generalized Jordan derivation. Furthermore, there exist a central element $\xi\in A^{**}_{sa}$ and a Jordan $^*$-derivation $D: A\to A^{**}$ such that $T(a) = D(a) + M_{\xi} (a) = D(a) + \xi \circ a,$ for all $a\in A$. Clearly $D$ is a triple derivation.
\end{proposition}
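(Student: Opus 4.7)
My plan is to proceed in three stages. First, I would show that the hypothesis collapses to a Jordan-product identity. Since $a\perp b$ implies $L(a,b)=0$, the summand $\{a,b,T(c)\}$ in the triple-derivability equation vanishes, leaving $\{T(a),b,c\}+\{a,T(b),c\}=0$ for every $c\in A$. Testing against a bounded approximate identity $(c_\lambda)\subset A$ and passing to the weak-$^{*}$ limit $c_\lambda\to 1$ in $A^{**}$ (using the formula $\{x,y,1\}=x\circ y^{*}$) gives $T(a)\circ b^{*}+a\circ T(b)^{*}=0$; replacing $b$ by $b^{*}$ and invoking $T^{\sharp}=T$ then yields the key identity
$$T(a)\circ b + a\circ T(b)=0,\qquad \text{for all } a\perp b \text{ in }A. \qquad (\mathrm{I})$$

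Second, I would apply the orthogonal bilinear form technology of \cite{JamPeSidd2015}. The symmetric bilinear map $F\colon A\times A\to A^{**}$ defined by $F(a,b):=T(a\circ b)-T(a)\circ b-a\circ T(b)$ vanishes on orthogonal pairs because $a\circ b=0$ for $a\perp b$ and (I) then forces $T(a)\circ b+a\circ T(b)=0$. For every $\phi\in A^{*}$ the form $V_\phi(a,b):=\phi(F(a,b))$ is therefore symmetric and orthogonal on $A$, and by \cite[Corollary 3.14 and Theorem 3.6]{JamPeSidd2015} together with Remark \ref{r module is the dual} it corresponds to a purely Jordan generalized derivation $G_{V_\phi}\colon A\to A^{*}$ with an associated element $\psi_\phi\in A^{*}$. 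Replacing $c$ by a bounded approximate identity in the generalized derivation equation for $G_{V_\phi}$, passing to weak-$^{*}$ limits (as in the proof of Lemma \ref{l centrality in Jordan algebras}) and evaluating at $a=1$ identifies $\psi_\phi(b)=-\tfrac12\phi(b\circ T^{**}(1))$; re-substituting and applying the Hahn--Banach theorem yields
$$T(a\circ b)=T(a)\circ b + a\circ T(b) - (a\circ b)\circ \xi \quad\text{in }A^{**},\qquad (\mathrm{II})$$
where $\xi:=T^{**}(1)\in A^{**}_{sa}$ (self-adjoint because $T^{\sharp}=T$ and $1^{*}=1$).

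The main obstacle is the third stage: verifying that $\xi$ lies in the centre of $A^{**}$. My strategy is to apply Lemma \ref{l centrality in Jordan algebras}, which reduces the problem to showing $U_{a,b}(\xi)=0$ whenever $a,b\in A_{sa}$ satisfy $a\perp b$. Writing $T=D+M_\xi$ with $D:=T-M_\xi$ and using the algebraic identity $(\xi\circ a)\circ b + a\circ(\xi\circ b)=\tfrac12\, U_{a,b}(\xi)+(a\circ b)\circ\xi$, both (I) and (II) translate into identical constraints on $D$, so by themselves they do not force centrality. The extra information comes from the full hypothesis with general $c\in A$: the vanishing $\{T(a),b,c\}+\{a,T(b),c\}=0$ combined with (I) gives the auxiliary relation $(c\circ b)\circ T(a)+(c\circ T(b))\circ a=(T(a)\circ c)\circ b+(a\circ c)\circ T(b)$ for $a\perp b$ in $A_{sa}$. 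Substituting $T=D+M_\xi$ and subtracting the analogous equation for $D$ produces an identity on the $\xi$-terms alone which, after simplification via the Jordan identity and letting $c$ range over $A$, forces $U_{a,b}(\xi)\circ c=0$ for every $c\in A$, hence $U_{a,b}(\xi)=0$. Lemma \ref{l centrality in Jordan algebras} then yields $\xi\in\mathcal{Z}(A^{**})$.

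With centrality established, $U_{a,b}(\xi)=2(a\circ b)\circ\xi$ for all $a,b\in A$, so (II) becomes the generalized Jordan derivation identity $T(a\circ b)=T(a)\circ b+a\circ T(b)-U_{a,b}(\xi/2)$. Setting $D:=T-M_\xi$, a direct computation from (II) and centrality gives $D(a\circ b)=D(a)\circ b+a\circ D(b)$, so $D$ is a Jordan derivation. Self-adjointness of $\xi$ and $T^{\sharp}=T$ yield $D^{\sharp}=D$, so $D$ is a Jordan $^{*}$-derivation; by \cite[Lemma 2]{HoMarPeRu}, $D$ is automatically a triple derivation, yielding the required decomposition $T=D+M_\xi$.
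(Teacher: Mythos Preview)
Your first two stages are essentially the paper's argument (with a harmless variation in the choice of bilinear form and a minor coefficient slip in identifying $\psi_\phi$), and they correctly produce the identity
\[
T(a\circ b)=T(a)\circ b+a\circ T(b)-(a\circ b)\circ\xi,\qquad \xi:=T^{**}(1)\in A^{**}_{sa}. \tag{II}
\]

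The genuine gap is in your third stage. Your plan is to invoke Lemma~\ref{l centrality in Jordan algebras} after showing $U_{a,b}(\xi)=0$ for $a,b\in A_{sa}$ with $a\perp b$, and you propose to obtain this by ``subtracting the analogous equation for $D$'' from the identity $\{T(a),b,c\}+\{a,T(b),c\}=0$. But there is no analogous equation for $D:=T-M_\xi$: from (I) and the identity $(\xi\circ a)\circ b+a\circ(\xi\circ b)=U_{a,b}(\xi)+(a\circ b)\circ\xi$ one computes, for $a\perp b$ in $A_{sa}$, that $D(a)\circ b+a\circ D(b)=-U_{a,b}(\xi)$, so $D$ fails (I) precisely by the quantity you are trying to kill; likewise $\{D(a),b,c\}+\{a,D(b),c\}=-\{\xi\circ a,b,c\}-\{a,\xi\circ b,c\}$, which does not vanish a priori. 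So you cannot subtract a ``$D$-version'' of the equation, and the asserted simplification to $U_{a,b}(\xi)\circ c=0$ is unsupported. You correctly diagnose that (I) and (II) alone do not force centrality, but the extra equation you write down still mixes the unknown $D$-terms and $\xi$-terms in a way that does not separate without additional input.

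The paper establishes centrality by a completely different route that avoids this difficulty: it extends (II) to $A^{**}$ by separate weak$^*$-continuity, specializes to a projection $p\in A^{**}$ to get $T^{**}(p)=2T^{**}(p)\circ p-p\circ\xi$, and then reads off from the Peirce decomposition of $T^{**}(p)$ and $\xi$ relative to $p$ that $P_1(p)(\xi)=0$ and hence $U_p(\xi)=p^2\circ\xi$, i.e.\ $p$ and $\xi$ operator commute. Norm-totality of projections in $A^{**}_{sa}$ then yields $\xi\in\mathcal Z(A^{**})$. If you want to rescue your approach via Lemma~\ref{l centrality in Jordan algebras}, you would need an independent argument showing $U_{a,b}(\xi)=0$ for orthogonal self-adjoint $a,b$; the hand-wave in your current Stage~3 does not provide one.
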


\begin{proof} Pick an arbitrary functional $\phi\in A^*$. We consider the following symmetric bounded bilinear form $V: A\times A\to \mathbb{C}$, $V(a,b) := \phi(T(a)\circ b + a\circ T(b))$. Let $(c_{\lambda})_{\lambda}$ be a bounded increasing approximate unit in $A$ (cf. \cite[Proposition 3.5.4]{HOS}). Given $a,b\in A_{sa}$ with $a\perp b$, the hypotheses assure that $$\begin{aligned}0= T\{a,b,c_{\lambda}\} &= \{T(a), b, c_{\lambda}\} + \{a, T(b) ,c_{\lambda}\} + \{a,b,T(c_{\lambda})\}\\
& = \{T(a), b, c_{\lambda}\} + \{a, T(b) ,c_{\lambda}\}, \hbox{ for all } \lambda.
\end{aligned} $$ Taking norm limits in $\lambda$ we arrive at $$0 = \{T(a), b, 1\} + \{a, T(b) ,1\}= T(a) \circ b + a\circ T(b)^*.$$ By applying that $T$ is symmetric and $b\in A_{sa}$ we get $0 = T(a) \circ b + a\circ T(b),$ and thus $V(a,b) =0$. We have therefore shown that $V$ is orthogonal on $A_{sa}$ and hence on the whole $A$ (see \cite[Corollary 3.14]{JamPeSidd2015}). Theorem 3.6 in \cite{JamPeSidd2015} implies that the mapping $G_V : A\to A^*$, $G_V (a) (b) = V(a,b)$ is a purely Jordan generalized derivation, that is, by Remark \ref{r module is the dual} there exists $\varphi \in A^*$ such that
$$ G_V (a \circ b ) = G_V(a) \circ b + a\circ G_{V} (b) - U_{a,b} (\varphi),$$ for all $a,b\in A$, or equivalently, $$ G_V (a \circ b ) (c) = (G_V(a) \circ b + a\circ G_{V} (b) - U_{a,b} (\varphi)) (c),$$ $$ V (a \circ b  , c) = V(a, c\circ b) +  V (b,a\circ c) - \varphi U_{a,b} (c),$$ or
$$ \phi(T(a \circ b)\circ c + (a \circ b)\circ T(c) ) =  \phi(T(a)\circ (c\circ b) + a\circ T(c\circ b)) $$
$$+  \phi(T(b)\circ (a\circ c) + b\circ T(a\circ c)) - \varphi U_{a,b} (c),$$
for all $a,b,c\in A.$ If we replace $c$ with the elements in a bounded increasing approximate unit $(c_{\lambda})_{\lambda}$ in $A$, and we take norm and weak$^*$ limits in $\lambda$ we arrive at \begin{equation}\label{eq penultimate prop symmetric} \phi( T(a \circ b) + (a \circ b)\circ T^{**}(1) ) = \phi(2 T(a)\circ  b + 2 a\circ T(b) ) - \varphi (a\circ b),
\end{equation}
for all $a,b\in A,$ where we have applied the well known fact that $(c_{\lambda})_{\lambda}\to 1$ in the weak$^*$ topology of $A^{**}$. By replacing $b$ with $c_{\lambda}$ in the above identity and taking norm and weak$^*$ limits we are led to the identity $$ \phi( T(a) + a \circ T^{**}(1) ) = \phi(2 T(a) + 2 a \circ T^{**}(1) ) - \varphi (a),$$ or equivalently, $$ \varphi (a) = \phi( T(a) +  a \circ T^{**}(1) )$$
for all $a\in A.$ Now, by substituting this latter identity in \eqref{eq penultimate prop symmetric} it follows that $$\phi( T(a \circ b) + (a \circ b)\circ T^{**}(1) ) = \phi(2 T(a)\circ  b + 2 a\circ T(b) ) - \phi( T(a\circ b) +  (a\circ b) \circ T^{**}(1) ),$$ which simplified gives
$$ \phi T(a \circ b)  = \phi( T(a)\circ  b +  a\circ T(b)- (a\circ b) \circ T^{**}(1) ),$$ for all $a,b\in A$. The arbitrariness of $\phi$ combined with the Hahn-Banach theorem prove that \begin{equation}\label{eq fundamental identity 1 in Propo 1} T(a \circ b)  = T(a)\circ  b +  a\circ T(b)- (a\circ b) \circ T^{**}(1),
 \end{equation} for all $a,b\in A$. Since $T$ is symmetric, $T^{**}$ enjoys the same property, and hence $T^{**} (1)\in A_{sa}^{**}.$ We can actually deduce from the the weak$^*$-density of $A$ in $A^{**}$ (Goldstine's Theorem), the separate weak$^*$-continuity of the product of $A^{**}$ (cf. \cite[Theorem 4.4.16 and Corollary 4.1.6]{HOS}) and the weak$^*$ continuity of $T^{**}$ that the identity in \eqref{eq fundamental identity 1 in Propo 1} holds for all $a,b\in A^{**}$ by just replacing $T$ with $T^{**}$.\smallskip

We shall next prove that $\xi= T^{**} (1)$ lies in the centre of $A^{**}$. By the conclusions above, for each projection $p\in A^{**}$ we have \begin{equation}\label{eq key identity prop 1 projections} T^{**}(p) = 2 T^{**}(p) \circ p - p \circ \xi.
 \end{equation} Let $T^{**}(p) = x_2 +x_1 +x_0$ and $\xi = y_2 +y_1 +y_0$ denote the Peirce decompositions of $T^{**}(p)$ and $\xi$ with respect to $p$, respectively (i.e., $x_j = P_j(p) (T^{**}(p))$ and $y_j = P_j(p) (\xi)$ for $j=0,1,2$). It is easy to check that $x_2 + \frac12 x_1=\{p,p,T^{**}(p)\} = p\circ T^{**}(p)$ and $y_2 + \frac12 y_1=\{p,p,\xi\} = p\circ \xi.$ Then the previous identity \eqref{eq key identity prop 1 projections} gives $$ x_2 +x_1 +x_0 =  T^{**}(p) = 2 T^{**}(p) \circ p - p \circ \xi = 2 x_2 + x_1 - y_2 - \frac12 y_1,$$ which guarantees that $y_1=0$. Now, having in mind that $\xi,p\in A^{**}_{sa}$ we deduce that $P_2(p) (\xi) = \{p,\{p,\xi,p\},p\} = U_p (U_p (\xi^*)^*) = U_p^2 (\xi) = U_p(\xi)$ (see \cite[\S 2.6]{HOS}), and thus $U_p(\xi) =  P_2(p) (\xi) = y_2$. On the other hand $p^2 \circ \xi = p\circ \xi = \{p,p,\xi\} = y_2 +\frac12 y_1 = y_2.$ Therefore $p^2 \circ \xi = U_p(\xi)$, which guarantees that $p$ and $\xi$ operator commute (cf. comments in page \pageref{eq operator commutativity}). Since projections in $A^{**}_{sa}$ are norm-total (cf. \cite[Proposition 4.2.3]{HOS}), we conclude that $\xi$ lies in the centre of $A^{**}$.\smallskip

Finally, by applying that $\xi= T^{**} (1)$ is a central element in $A^{**}$, we deduce from \eqref{eq fundamental identity 1 in Propo 1} that the identity $$T(a \circ b)  = T(a)\circ  b +  a\circ T(b)- U_{a, b}  (\xi),$$ holds for all $a,b\in A$, witnessing that $T$ is a generalized Jordan derivation. It is easy to check that the mapping $D: A\to A^{**}$, $D (a) = T(a) - \xi \circ a$ is a Jordan $^*$-derivation, and $T$ satisfies $T(a) = D(a) + M_{\xi} (a)$ for all $a\in A$.
\end{proof}

The next corollary seems to be a new advance too.

\begin{corollary}\label{c symmetric operators Cstaralgebras} Let $T: A\to A$ be a symmetric bounded linear operator on a C$^*$-algebra. Suppose that $T$ is triple derivable at orthogonal pairs. Then $T$ is a generalized derivation. Furthermore, there exist a central element $\xi\in A^{**}_{sa}$ and a $^*$-derivation $D: A\to A^{**}$ such that $T(a) = D(a) + M_{\xi} (a) = D(a) + \xi a,$ for all $a\in A$.
\end{corollary}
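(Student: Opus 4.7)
My plan is to reduce the corollary to its Jordan analogue, Proposition \ref{p symmetric operators}, by regarding the C$^*$-algebra $A$ as a JB$^*$-algebra with the canonical Jordan product $a\circ b=\frac12(ab+ba)$ and the usual involution. Since the associated triple product on $A$ given by \eqref{eq product operators} agrees with the Jordan triple product \eqref{eq product jordan} in this case, the hypothesis that $T$ be triple derivable at orthogonal pairs is invariant under the passage from the C$^*$- to the JB$^*$-algebra viewpoint; likewise, the notion of being symmetric coincides in the two settings.

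Applying Proposition \ref{p symmetric operators} I obtain a central element $\xi\in A^{**}_{sa}$ (centrality being understood in the Jordan sense of $A^{**}$) and a Jordan $^*$-derivation $D:A\to A^{**}$ such that
\[
T(a)=D(a)+\xi\circ a \qquad (a\in A).
\]
To finish, I need two facts. First, the Jordan centre of $A^{**}$ coincides with the associative centre; this is precisely the observation recorded in the paper on page \pageref{eq operator commutativity} (obtained from the Shirshov--Cohn theorem). Hence $\xi$ is central in the usual sense, so $\xi\circ a=\frac12(\xi a+a\xi)=\xi a=a\xi$ for every $a\in A^{**}$, and therefore $M_\xi(a)=\xi a$ as required.

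Second, I need that the Jordan $^*$-derivation $D:A\to A^{**}$ is actually an (associative) $^*$-derivation. This is the content of the classical Sinclair--Johnson-type theorem on Jordan derivations from a C$^*$-algebra into a Banach $A$-bimodule (any Jordan derivation from a C$^*$-algebra $A$ into a Banach $A$-bimodule is a derivation), applied to the essential bimodule $A^{**}$; the $^*$-condition $D(a^*)=D(a)^*$ is preserved in the passage. Once $D$ is a $^*$-derivation and $\xi$ is central in the associative sense, the identity $T(a)=D(a)+\xi a$ shows that
\[
T(ab)=D(a)b+aD(b)+\xi ab=T(a)b+aT(b)-\xi ab=T(a)b+aT(b)-aT^{**}(1)b,
\]
witnessing that $T$ is a generalized derivation. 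No step is really a serious obstacle, provided the paper is willing to quote the classical result identifying Jordan derivations on C$^*$-algebras with derivations; the only point to be careful about is ensuring that the target module $A^{**}$ and the $^*$-preserving property are compatible with that invocation.
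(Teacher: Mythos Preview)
Your proposal is correct and follows essentially the same route as the paper: apply Proposition~\ref{p symmetric operators} to obtain $D$ and $\xi$, invoke Johnson's theorem that every Jordan derivation on a C$^*$-algebra (into a Banach bimodule) is a derivation to upgrade $D$ to a $^*$-derivation, and use centrality of $\xi$ to replace $\xi\circ a$ by $\xi a$. The only difference is that you spell out the generalized-derivation identity explicitly, whereas the paper simply says ``the rest is clear because $\xi$ is a central element.''
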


\begin{proof} Proposition \ref{p symmetric operators} proves the existence of a central element $\xi\in A^{**}_{sa}$ and a Jordan $^*$-derivation $D: A\to A^{**}$ such that $T(a) = D(a) + M_{\xi} (a) = D(a) + \xi \circ a,$ for all $a\in A$. A celebrated result of B.E. Johnson asserts that every Jordan derivation on a C$^*$-algebra is a derivation (cf. \cite[Theorem 6.3]{John1996}). Therefore, $D$ is a $^*$-derivation. The rest is clear because $\xi$ is a central element.
\end{proof}

Let $A$ be a JB$^*$-algebra. Following \cite{Ed80}, the \emph{(Jordan) multipliers algebra} of $A$ is the set $$M(A):=\{x\in A^{**}: x\circ A \subseteq  A\}.$$  The space $M(A)$ is a unital JB$^*$-subalgebra of $A^{**}$. Moreover, $M(A)$ is the (Jordan) idealizer of $A$ in $A^{**}$, that is, the largest JB$^*$-subalgebra of $A^{**}$ containing $A$ as a closed Jordan ideal (cf. \cite[Theorem 2]{Ed80}).\smallskip

We deal next with the anti-symmetric operators which are triple derivable at orthogonal pairs.

\begin{proposition}\label{p anti-symmetric operators unital} Let $T: A\to A$ be an anti-symmetric bounded linear operator on a JB$^*$-algebra. Suppose that $T$ is triple derivable at orthogonal pairs. Then $T$ is a triple derivation, moreover, there exists an anti-symmetric element $\eta$ in the multiplier algebra of $A$, such that $T(x) = \eta \circ x$ for all $x\in A$.
\end{proposition}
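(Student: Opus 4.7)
The plan is to mimic the proof of Proposition \ref{p symmetric operators}, replacing the symmetric orthogonal form used there by its anti-symmetric counterpart, so as to invoke the Lie Jordan derivation correspondence (Theorem 3.13 of \cite{JamPeSidd2015}) rather than the purely Jordan one. I expect the argument to produce $T = M_{\eta}$ with $\eta := T^{**}(1) \in M(A)$ anti-symmetric, after which the fact that $T$ is a triple derivation will follow from the known result recalled on page~\pageref{eq Jordan and triple derivations are related by HMPR}.

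\textbf{Step 1: Build an anti-symmetric orthogonal form.} For each $\phi \in A^{*}$, I would introduce
$$V(a,b) := \phi\bigl(T(a)\circ b - a\circ T(b)\bigr),$$
which is anti-symmetric in $(a,b)$ by the commutativity of $\circ$. To show $V$ vanishes on orthogonal self-adjoint pairs, I would run the same bounded-approximate-unit argument as in Proposition \ref{p symmetric operators}: for $a,b \in A_{sa}$ with $a\perp b$, the identity $L(a,b)=0$ kills the summand $\{a,b,T(c_{\lambda})\}$, so the triple-derivability at orthogonal pairs collapses to $\{T(a),b,c_{\lambda}\}+\{a,T(b),c_{\lambda}\}=0$. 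Taking the weak$^{*}$-limit $c_{\lambda}\to 1$ and using $\{x,y,1\}=x\circ y^{*}$ we arrive at $T(a)\circ b + a\circ T(b)^{*}=0$; the anti-symmetry of $T$ combined with $b^{*}=b$ converts this into $V(a,b)=0$. Corollary 3.14 of \cite{JamPeSidd2015} upgrades this to orthogonality of $V$ on the whole of $A$.

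\textbf{Step 2: Extract the multiplier $\eta$.} Theorem 3.13 of \cite{JamPeSidd2015} now provides a Lie Jordan derivation $D_{V}: A\to A^{*}$, $D_{V}(a)(b):=V(a,b)$, which in particular obeys the Jordan--Leibniz rule. Writing out $D_{V}(a\circ b)(c) = D_{V}(a)(c\circ b)+D_{V}(b)(a\circ c)$, specializing $c$ to an approximate unit $(c_{\lambda})_{\lambda}$ and taking weak$^{*}$-limits, I expect the right-hand side to collapse to zero by the symmetry of $\circ$, leaving
$$\phi\bigl(T(a\circ b)-(a\circ b)\circ T^{**}(1)\bigr)=0 \quad (\phi \in A^{*}).$$
By Hahn--Banach, $T(a\circ b)=(a\circ b)\circ \eta$ with $\eta:=T^{**}(1)$. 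Using weak$^{*}$-density of $A$ in $A^{**}$ and separate weak$^{*}$-continuity of $\circ$ (cf.\ \cite[Theorem 4.4.16 and Corollary 4.1.6]{HOS}) this extends to $T^{**}(a\circ b)=(a\circ b)\circ \eta$ on $A^{**}$. Setting $b=1$ yields $T^{**}(a)=a\circ \eta$ for all $a\in A^{**}$, whence $T(a)=a\circ \eta \in A$ for every $a\in A$, proving $\eta \in M(A)$. Anti-symmetry of $T$ immediately gives $\eta^{*}=-\eta$.

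\textbf{Step 3: Conclude.} Since $\eta$ is skew-symmetric in the unital JB$^{*}$-algebra $M(A)$, the recalled result on page~\pageref{eq Jordan and triple derivations are related by HMPR} says that $M_{\eta}:M(A)\to M(A)$, $x\mapsto \eta\circ x$, is a triple derivation; restricting to the closed Jordan ideal $A\subseteq M(A)$ preserves both the triple product and the Leibniz identity, so $T=M_{\eta}|_{A}$ is a triple derivation on $A$. The main obstacle lies in Step~1: one must choose the form $V$ so that the combined effect of (i) the vanishing of $\{a,b,T(c_{\lambda})\}$ for $a\perp b$, (ii) the weak$^{*}$-limit of the remaining two triple products against an approximate unit, and (iii) the anti-symmetry of $T$, conspires to give an \emph{anti-symmetric} bilinear form that is orthogonal on $A_{sa}$. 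Once that is achieved, Steps~2 and~3 are a direct transcription of the symmetric case, but simpler, since no Jordan $^{*}$-derivation summand is needed.
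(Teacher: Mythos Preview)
Your proposal is correct and follows essentially the same route as the paper's proof: build the anti-symmetric form $V(a,b)=\phi(T(a)\circ b - a\circ T(b))$, verify its orthogonality on $A_{sa}$ via the approximate-unit trick and the identity $T(b)^{*}=-T(b)$, invoke \cite[Theorem 3.13]{JamPeSidd2015} to obtain the Lie Jordan derivation $D_{V}$, and then pass to the limit $c_{\lambda}\to 1$ in the Leibniz identity to deduce $T(a)=\eta\circ a$ with $\eta=T^{**}(1)\in M(A)$ skew. The only cosmetic difference is that you take the limit $c_{\lambda}\to 1$ directly at the level of $V$ and use $V(a,b)+V(b,a)=0$ to kill the right-hand side in one stroke, whereas the paper first strips off $\phi$ by Hahn--Banach to obtain an operator identity (its equation \eqref{eq base identity in Prop 2}) and then specializes $c$; both arrive at $T(a\circ b)=(a\circ b)\circ\eta$ and then $T(a)=\eta\circ a$.
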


\begin{proof} Fix an arbitrary $\phi\in A^*$. Let us consider the following continuous bilinear form $V(a,b) = \phi(- a\circ T(b) + T(a) \circ b)$ ($a,b\in A$). Clearly $V$ is anti-symmetric. Let us take $a,b\in A_{sa}$ with $a\perp b$. By hypothesis, $0=T\{a,b,1\} = \{T(a),b,1\} + \{a,T(b),1\} + \{a,b,T(1)\} = \{T(a),b,1\} + \{a,T(b),1\},$ and thus
$$0=\{T(a),b,1\} + \{a,T(b),1\} = T(a) \circ b + a\circ T(b)^* = T(a) \circ b - a\circ T(b),$$ where in the last equality we applied that $T^{\sharp} = -T$. If $A$ is not unital, we choose a bounded increasing approximate unit $(c_{\lambda})_{\lambda}$ in $A$ (see \cite[Proposition 3.5.4]{HOS}), and the hypotheses give $$\begin{aligned}0=T\{a,b,c_{\lambda}\} &= \{T(a),b,c_{\lambda}\} + \{a,T(b),c_{\lambda}\} + \{a,b,T(c_{\lambda})\} \\
&= \{T(a),b,c_{\lambda}\} + \{a,T(b),c_{\lambda}\} \hbox{ for all } \lambda.
\end{aligned}$$ Taking norm limits in $\lambda$ we obtain $$0= T(a) \circ b + a\circ T(b)^* = T(a) \circ b - a\circ T(b),$$ where, as before, we applied that $T^{\sharp} = -T$. We have therefore shown that $V(a,b) = \phi( T(a) \circ b - T(b) \circ a)  = 0,$ and thus $V$ is orthogonal on $A_{sa}$, and hence on $A$ (cf. \cite[Corollary 3.14]{JamPeSidd2015}). It follows from \cite[Theorem 3.13]{JamPeSidd2015} that the mapping $D_{V} : A\to A^*,$ $D_V(a) (b) = V(a,b)$ is a Lie Jordan derivation. Consequently, $$D_V(a\circ c) = D_V(a) \circ c + a\circ D_V( c), \hbox{ for all } a,c\in A, $$ equivalently,
$$V(a\circ c, b) =  D_V(a\circ c) (b) = (D_V(a) \circ c + a\circ D_V( c)) (b) = V(a, b\circ c) +V(c, a\circ b), $$ and
$$\phi( T(a\circ c) \circ b -  T(b) \circ (a\circ c)) = \phi( T(a) \circ (b\circ c) - T(b\circ c) \circ a  +T(c) \circ (a\circ b) - T(a\circ b) \circ c),$$
for all $a,b,c\in A$. Now, having in mind the arbitrariness of $\phi$, we deduce from the Hahn-Banach theorem that
\begin{equation}\label{eq base identity in Prop 2}\begin{split}
T(a\circ c) \circ b -  T(b) \circ (a\circ c) & = T(a) \circ (b\circ c) - T(b\circ c) \circ a  \\
&+T(c) \circ (a\circ b) - T(a\circ b) \circ c,
\end{split} \end{equation} for all $a,b,c\in A$.\smallskip

Let us assume that $A$ is unital. In this case, by replacing $c=1$ in \eqref{eq base identity in Prop 2} we obtain $$- a\circ T(b) +  T(a) \circ b = T(a) \circ b - T(b) \circ a  +T(1) \circ (a\circ b) - T(a\circ b) ,$$ for all $a,b\in A$, which implies that $ T(a\circ b) = (a\circ b)\circ T(1)$ for all $a,b\in A$.\smallskip

In the non-unital case, we can consider a bounded increasing approximate unit $(c_{\lambda})_{\lambda}$ in $A$ (see \cite[Proposition 3.5.4]{HOS}). Having in mind that $(c_{\lambda})_{\lambda}\to 1$ in the weak$^*$ topology of $A^{**}$, by replacing $c$ with $c_{\lambda}$ in \eqref{eq base identity in Prop 2} and taking weak$^*$ limits we get $$ - a\circ T(b) +  T(a) \circ b = T(a) \circ b - T(b) \circ a  +T^{**}(1) \circ (a\circ b) - T(a\circ b) ,$$ for all $a,b\in A$, and thus $T(a) = T^{**} (1) \circ a$ for all $a\in A$. Since $A\ni T(a) = T^{**} (1) \circ a$ for all $a\in A$, we conclude that $T^{**} (1)$ lies in the multiplier algebra of $A$. Finally, since $T^{**}$ is anti-symmetric too, the element $\eta =T^{**} (1)$ is anti-symmetric and satisfies the desired statement.
\end{proof}

\begin{remark}\label{r weaker hypotheses in Propositions 1 and 2 in the unital case}{\rm
Let us observe that in the case of unital JB$^*$-algebras, the hypothesis $T$ being triple derivable at orthogonal elements in Propositions \ref{p symmetric operators} and \ref{p anti-symmetric operators unital} can be reduced to the property $$\{T(a),b,1\} + \{a,T(b),1\}=0, \hbox{ for all } a,b\in A_{sa} \hbox{ with } a \perp b.$$ It can be checked that the proofs given above remain valid under this weaker assumption.
}\end{remark}

The C$^*$- version of Proposition \ref{p anti-symmetric operators unital} also is a new result worth to be stated.

\begin{corollary}\label{c anti-symmetric operators unital Cstar algebras} Let $T: A\to A$ be an anti-symmetric bounded linear operator on a C$^*$-algebra. Suppose that $T$ is triple derivable at orthogonal pairs. Then $T$ is a triple derivation, moreover, there exists an anti-symmetric element $\eta$ in the multiplier algebra of $A$, such that $T(x) = \eta \circ x$ for all $x\in A$.
\end{corollary}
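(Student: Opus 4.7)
The strategy is to obtain this statement as a direct corollary of Proposition \ref{p anti-symmetric operators unital}. Recall that every C$^*$-algebra $A$ carries a canonical structure of JB$^*$-algebra when equipped with the Jordan product $a\circ b = \tfrac{1}{2}(ab+ba)$, while keeping its original norm and involution. Moreover, as noted around \eqref{eq product operators} and \eqref{eq product jordan}, the JB$^*$-triple product induced from this Jordan product agrees with the canonical triple product coming from the C$^*$-structure of $A$. In particular, the notion of orthogonality in the JB$^*$-triple sense coincides with the usual C$^*$-orthogonality $ab^* = b^*a = 0$, so the hypothesis ``$T$ is triple derivable at orthogonal pairs'' reads identically whether $A$ is viewed as a C$^*$-algebra or as a JB$^*$-algebra. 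The anti-symmetry condition $T^{\sharp}=-T$ only involves the involution $^*$ and transfers verbatim.

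With these identifications in hand, the plan is simply to apply Proposition \ref{p anti-symmetric operators unital} to $T$ regarded as a bounded linear operator on the JB$^*$-algebra $A$. The proposition yields an anti-symmetric element $\eta$ in the (Jordan) multiplier algebra $M(A) = \{x \in A^{**} : x\circ A \subseteq A\}$ such that $T(x) = \eta\circ x$ for every $x\in A$, and it also asserts that $T$ is a triple derivation. This is precisely the conclusion claimed by the corollary, with ``multiplier algebra of $A$'' interpreted in the Jordan sense, consistent with the terminology fixed just before Proposition \ref{p anti-symmetric operators unital} and used throughout the abstract.

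No additional C$^*$-specific work is required: unlike Corollary \ref{c symmetric operators Cstaralgebras}, where the passage from the JB$^*$- to the C$^*$-version invoked Johnson's theorem to promote a Jordan $^*$-derivation into an associative $^*$-derivation, the anti-symmetric case already delivers the sharp formula $T(x)=\eta\circ x$ at the JB$^*$-level, so no such upgrade is needed. The only point one might want to expand, if a sharper statement were desired, would be to show that the anti-symmetric Jordan multiplier $\eta=ih$ actually lies in the two-sided C$^*$-multiplier algebra; this can be done by writing $\eta\circ a\in A$ as $ha+ah\in A$, bilinearizing to obtain $aha\in A$ for all $a\in A$, and then using an approximate identity, but this refinement is not demanded by the statement as given. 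Accordingly, the main obstacle in this corollary is not in its proof but rather in the preceding Proposition \ref{p anti-symmetric operators unital}, whose verification already absorbed the orthogonal-form machinery of Section~3.
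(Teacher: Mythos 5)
Your proposal is correct and matches the paper, which states this corollary without further argument precisely as the C$^*$-specialization of Proposition \ref{p anti-symmetric operators unital}, using that a C$^*$-algebra with its canonical Jordan product is a JB$^*$-algebra whose induced triple product and orthogonality relation coincide with the C$^*$-ones. Your observation that, unlike Corollary \ref{c symmetric operators Cstaralgebras}, no appeal to Johnson's theorem is needed here is exactly the point, and the optional refinement about the two-sided C$^*$-multiplier algebra is not required by the statement.
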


We are now in a position to address the characterization of those bounded linear maps on a JB$^*$-algebra which are triple derivable at orthogonal elements.

\begin{theorem}\label{t bd linear maps triple derivable at orth pairs on a JBstar algebra} Let $T: A\to A$ be a bounded linear maps on a JB$^*$-algebra. Suppose $T$ is triple derivable at orthogonal elements. Then there exists a Jordan $^*$-derivation $D:A\to A^{**}$, a central element $\xi\in A^{**}_{sa},$ and an anti-symmetric element $\eta$ in the multiplier algebra of $A$, such that $$ T(a) = D(a) + \xi \circ a + \eta \circ a, \hbox{ for all } a\in A.$$ Moreover, the mapping $\delta: A\to A^{**}$, $\delta (a) = D(a) + \eta \circ a$ {\rm(}$a\in A${\rm)} is a triple derivation and $T(a) = \delta(a) + \xi\circ a,$ for all $a\in A$. The mapping $S: A\to A^{**}$, $S(a) = \xi \circ a$ is the restriction to $A$ of an element in the centroid of $A^{**}$.
\end{theorem}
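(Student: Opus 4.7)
The plan is to reduce the statement to the two structural propositions already established in this section by splitting $T$ into its symmetric and anti-symmetric parts. Set $T_s := \frac{1}{2}(T + T^{\sharp})$ and $T_{as} := \frac{1}{2}(T - T^{\sharp})$, so that $T_s^{\sharp} = T_s$, $T_{as}^{\sharp} = -T_{as}$, and $T = T_s + T_{as}$. By Lemma \ref{l properies of the conjugate}$(c)$, both $T_s$ and $T_{as}$ are bounded linear mappings on $A$ that remain triple derivable at orthogonal elements.

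Next I would apply Proposition \ref{p symmetric operators} to the symmetric part $T_s$ to obtain a central element $\xi \in A^{**}_{sa}$ and a Jordan $^*$-derivation $D : A \to A^{**}$ such that $T_s(a) = D(a) + \xi \circ a$ for all $a \in A$. In parallel, Proposition \ref{p anti-symmetric operators unital} applied to the anti-symmetric part $T_{as}$ produces an anti-symmetric element $\eta$ in the multiplier algebra $M(A)$ with $T_{as}(a) = \eta \circ a$ for all $a \in A$. Adding the two identities gives the decomposition $T(a) = D(a) + \xi \circ a + \eta \circ a$ asserted in the theorem.

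For the remaining conclusions I would set $\delta(a) := D(a) + \eta \circ a$ and $S(a) := \xi \circ a$. The Jordan $^*$-derivation $D$ is automatically a triple derivation by \cite[Lemma 2]{HoMarPeRu} (recalled on page \pageref{eq Jordan and triple derivations are related by HMPR}). Since $\eta$ is anti-symmetric in the JB$^*$-algebra $A^{**}$, the multiplication operator $M_{\eta} : A^{**} \to A^{**}$ is a triple derivation by \cite[Lemma 1]{HoMarPeRu}; the assumption $\eta \in M(A)$ forces $\eta \circ A \subseteq A$, so its restriction to $A$ takes values in $A$ and is again a triple derivation. Hence $\delta = D + M_{\eta}|_{A}$ is a triple derivation from $A$ into $A^{**}$ and $T = \delta + S$. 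Finally, the centrality of $\xi$ in the unital JBW$^*$-algebra $A^{**}$, together with \cite[Propositions 3.4 and 3.5]{DiTim88} (recalled on page \pageref{centroid and Peirce 2-subspaces}), implies that $M_{\xi} : A^{**} \to A^{**}$ belongs to the centroid of $A^{**}$; since $S = M_{\xi}|_A$, this is exactly the last assertion.

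No serious obstacle is expected here, because the conceptual work has already been done in Propositions \ref{p symmetric operators} and \ref{p anti-symmetric operators unital}: the present theorem is essentially an assembly argument. The only steps requiring a little care are checking that the symmetric/anti-symmetric decomposition still makes sense when $A$ is non-unital (so that $T^{\sharp}$ remains well-defined and triple derivability at orthogonal elements passes to both summands via Lemma \ref{l properies of the conjugate}), and verifying compatibility between the multiplier condition on $\eta$, the centrality of $\xi$, and the passage from the Jordan/multiplication description to the centroid/triple-derivation one.
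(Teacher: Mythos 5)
Your proposal is correct and follows essentially the same route as the paper: decompose $T$ into its symmetric and anti-symmetric parts via Lemma \ref{l properies of the conjugate}, then apply Propositions \ref{p symmetric operators} and \ref{p anti-symmetric operators unital} to the two summands. The only difference is that you spell out the verifications for the ``moreover'' clauses (that $D+M_{\eta}$ is a triple derivation and that $M_{\xi}$ lies in the centroid of $A^{**}$), which the paper leaves implicit, citing the same facts from \cite{HoMarPeRu} and \cite{DiTim88}.
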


\begin{proof} Let us write $T = T_1 + T_2$, where $T_1 = \frac{T + T^{\sharp}}{2}$ is symmetric and $T_2 = \frac{T - T^{\sharp}}{2}$ is anti-symmetric. Lemma \ref{l properies of the conjugate} assures that $T_1$ and $T_2$ are triple derivable at orthogonal pairs. Propositions \ref{p symmetric operators} and \ref{p anti-symmetric operators unital} applied to $T_1$ and $T_2$ prove the desired conclusion.
\end{proof}

\begin{remark}\label{r unital case for Theorem for JBstar algebras}{\rm If in Theorem \ref{t bd linear maps triple derivable at orth pairs on a JBstar algebra} the JB$^*$-algebra $A$ is unital, the Jordan $^*$-derivation $D$ and the triple derivation $\delta$ are $A$-valued, and $\xi$ is a symmetric element in the centre of $A$.}
\end{remark}

Theorem \ref{t bd linear maps triple derivable at orth pairs on a JBstar algebra} particularly holds when $A$ is a C$^*$-algebra.\smallskip

We can now complete the conclusion on linear maps which are triple derivable at zero.

\begin{corollary}\label{c tder at zero and tder at op equivalent on JBstar algebras} Let $T: A\to A$ be a bounded linear maps on a JB$^*$-algebra {\rm(}and in particular on a C$^*$-algebra{\rm)}. The the following assertions are equivalent:\begin{enumerate}[$(a)$]\item $T$ is triple derivable at zero;
\item $T$ is triple derivable at orthogonal elements;
\item There exists a Jordan $^*$-derivation $D:A\to A^{**}$, a central element $\xi\in A^{**}_{sa},$ and an anti-symmetric element $\eta$ in the multiplier algebra of $A$, such that $$ T(a) = D(a) + \xi \circ a + \eta \circ a, \hbox{ for all } a\in A;$$
\item There exist a triple derivation $\delta: A\to A^{**}$ and a symmetric element $S$ in the centroid of $A^{**}$ such that $T= \delta +S$.
\end{enumerate}
\end{corollary}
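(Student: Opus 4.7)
The plan is to close the cycle $(a)\Rightarrow(b)\Rightarrow(c)\Rightarrow(d)\Rightarrow(a)$, invoking Theorem~\ref{t bd linear maps triple derivable at orth pairs on a JBstar algebra} for the only non-trivial step; everything else is unpacking definitions.

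For $(a)\Rightarrow(b)$, recall that $a\perp b$ means $L(a,b)=0$, so $\{a,b,c\}=0$ for every $c\in A$; thus every triple $(a,b,c)$ with $a\perp b$ is a triple product equal to zero, and the defining identity of being triple derivable at zero applies verbatim to yield (b). The implication $(b)\Rightarrow(c)$ is precisely the content of Theorem~\ref{t bd linear maps triple derivable at orth pairs on a JBstar algebra}.

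The implication $(c)\Rightarrow(d)$ also follows from the discussion inside that theorem. Given the decomposition in (c), set $\delta(a):=D(a)+\eta\circ a$ and $S(a):=\xi\circ a$. The map $D$ is a triple derivation because every Jordan $^*$-derivation is (cf.\ \cite[Lemma~2]{HoMarPeRu}), and $M_\eta$ is a triple derivation since $\eta$ is anti-symmetric in the multiplier algebra (cf.\ \cite[Lemma~1]{HoMarPeRu}); hence $\delta:A\to A^{**}$ is a triple derivation. Because $\xi\in A^{**}_{sa}$ lies in the centre of $A^{**}$, the operator $M_\xi:A^{**}\to A^{**}$ belongs to the centroid of $A^{**}$ and satisfies $M_\xi^{\sharp}=M_\xi$, so it is symmetric; and obviously $T=\delta+S|_A$.

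For $(d)\Rightarrow(a)$, suppose $T=\delta+S$ with $\delta$ a triple derivation and $S$ an element in the centroid of $A^{**}$. By \cite[Lemma~2.6]{DiTim88} there exists a bounded operator $R$ with $\{a,S(b),c\}=R\{a,b,c\}$. Fix $a,b,c\in A$ with $\{a,b,c\}=0$. Using the defining identity $S\{a,b,c\}=\{S(c),b,a\}$ of the centroid together with the symmetry of the triple product in its outer variables, we get
\[
\{S(a),b,c\}=S\{a,b,c\}=0,\qquad \{a,b,S(c)\}=\{S(c),b,a\}=S\{a,b,c\}=0,
\]
and $\{a,S(b),c\}=R\{a,b,c\}=0$. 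Combined with $\{\delta(a),b,c\}+\{a,\delta(b),c\}+\{a,b,\delta(c)\}=\delta\{a,b,c\}=0$ (triple Leibniz rule for $\delta$), this shows $T$ is triple derivable at zero. The only substantive step in the whole cycle is $(b)\Rightarrow(c)$, already proved in Theorem~\ref{t bd linear maps triple derivable at orth pairs on a JBstar algebra}; the rest is bookkeeping with the centroid identities and the structural decomposition supplied by that theorem.
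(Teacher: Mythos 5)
Your proposal is correct and follows essentially the same route as the paper: $(a)\Rightarrow(b)$ is immediate from $a\perp b$ forcing $\{a,b,c\}=0$, the implications $(b)\Rightarrow(c)\Rightarrow(d)$ are exactly Theorem \ref{t bd linear maps triple derivable at orth pairs on a JBstar algebra} (whose statement already contains the decomposition $\delta=D+M_\eta$, $S=M_\xi$ you re-derive), and your $(d)\Rightarrow(a)$ computation with the centroid identity and the operator $R$ from \cite[Lemma 2.6]{DiTim88} is the same argument the paper gives. No gaps.
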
 

\begin{proof} We have already commented in the introduction that $(a)\Rightarrow (b)$. The implications $(b)\Rightarrow (c) \Rightarrow (d)$ follow from Theorem \ref{t bd linear maps triple derivable at orth pairs on a JBstar algebra}. Finally, if $(d)$ holds and we take $a,b,c\in A$ with $\{a,b,c\} =0$. By the assumptions $$\{T(a), b,c \} + \{a, T(b), c\} + \{a,b,T(c)\} $$ $$=  \delta \{a, b,c \} + \{S(a), b,c \} + \{a, S(b), c\} + \{a,b,S(c)\}$$ $$ = S\{a,b,c\} + R\{a,b,c\} + S\{a,b,c\} = 0,$$ where $R$ is the corresponding element in the centroid of $A^{**}$ satisfying $ \{x, S(y), z\} = R\{x,y,z\}$ for all $x,y,z\in A^{**}$. 
\end{proof}

\section{A new characterization of triple derivations}

This section is devoted to present a new characterization of those continuous linear maps on a JBW$^*$-triple which are triple derivations in terms of a good local behavior on Peirce 2-subspaces. Let us begin with a property which was already implicit in \cite{HoMarPeRu} and in many other recent studies on triple derivations and local triple derivations (see for example \cite{BurFerGarPe2014,BurFerPe2014}). Let $\delta: E\to E$ be a triple derivation. Given a tripotent $e\in E$, the identity $$\delta(e) = \delta\{e,e,e\} = 2 \{\delta(e),e,e\} + \{e,\delta(e),e\},$$ combined with Peirce arithmetic show that $P_0(e) \delta(e) =0$ and $Q(e)\delta(e) = Q(e)P_2(e)\delta(e) = -P_2(e)\delta(e)$. The later implies that $P_2(e)\delta(e)$ is a skew symmetric element in the JB$^*$-algebra $E_2(e)$ (i.e. $(P_2(e)\delta(e))^{*_e} = -P_2(e)\delta(e)$). This is a necessary condition to be a triple derivation.

\begin{theorem}\label{t characterization of triple derivations} Let $T:E\to E$ be a bounded linear map, where $E$ is a JB$^*$-triple in which tripotents are norm-total. The following statements are equivalent:
\begin{enumerate}[$(a)$] \item $T$ is a triple derivation;
\item $T$ is triple derivable at orthogonal elements and for each tripotent $e$ in $E$, the element $P_2(e)T(e)$ is a skew symmetric element in the JB$^*$-algebra $E_2(e)$ {\rm(}i.e. $(P_2(e)T(e))^{*_e} = -P_2(e) T(e)${\rm)}.
\end{enumerate}
\end{theorem}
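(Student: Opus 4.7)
The plan is as follows. The implication $(a)\Rightarrow(b)$ is essentially spelled out in the paragraph preceding the theorem: every triple derivation is trivially triple derivable at orthogonal pairs, and expanding the triple Leibniz rule at $e=\{e,e,e\}$ and equating Peirce $2$-components forces $(P_2(e)\delta(e))^{*_e} = -P_2(e)\delta(e)$.

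For $(b)\Rightarrow(a)$, I would aim to verify condition $(c)$ of Proposition \ref{p first characterization of triple derivations}; that is, fix a tripotent $e\in E$ and show both that $P_0(e)T(e)=0$ and that $D_2^e := P_2(e)T|_{E_2(e)}$ is a triple derivation on $E_2(e)$. The norm-totality hypothesis then yields that $T$ itself is a triple derivation.

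To get $P_0(e)T(e)=0$, I would write $T(e)=x_2+x_1+x_0$ in Peirce components and, for arbitrary $b,c\in E_0(e)$, apply the orthogonal derivable hypothesis (using $e\perp b$):
\[
0 = \{T(e),b,c\} + \{e,T(b),c\} + \{e,b,T(c)\}.
\]
The last summand vanishes because $L(e,b)=0$. A careful Peirce analysis using the vanishing rules $\{E_2(e),E_0(e),E\}=\{E_0(e),E_2(e),E\}=0$ together with the standard $E_{i-j+k}$-arithmetic shows that the only $E_0(e)$-contribution on the right-hand side comes from $\{x_0,b,c\}$. Projecting to $E_0(e)$ therefore yields $\{x_0,b,c\}=0$ for all $b,c\in E_0(e)$, and setting $b=c=x_0$, the JB$^*$-triple axiom $\|\{x_0,x_0,x_0\}\|=\|x_0\|^3$ forces $x_0=0$.

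For the second property, I would first apply Lemma \ref{l 1 for linear maps triple derivable at orthogonal pairs} to the contractive projection $P_2(e)$, whose image is the JB$^*$-subtriple $E_2(e)$, to conclude that $D_2^e$ is triple derivable at orthogonal pairs as a bounded linear map on the unital JB$^*$-algebra $E_2(e)$. Invoking Theorem \ref{t bd linear maps triple derivable at orth pairs on a JBstar algebra} together with Remark \ref{r unital case for Theorem for JBstar algebras} produces a decomposition $D_2^e = D + M_\xi + M_\eta$, where $D$ is a Jordan $^*$-derivation on $E_2(e)$, $\xi$ is a symmetric element in the centre, and $\eta$ is an anti-symmetric element of $E_2(e)$. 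Evaluating at the unit $e$ gives $P_2(e)T(e) = \xi + \eta$ (since $D(e)=0$), and the uniqueness of the symmetric/anti-symmetric splitting combined with the hypothesis that $P_2(e)T(e)$ is skew forces $\xi=0$. Since Jordan $^*$-derivations on a JB$^*$-algebra are triple derivations and $M_\eta$ is a triple derivation for anti-symmetric $\eta$, one concludes that $D_2^e$ is a triple derivation, completing the verification.

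The main technical obstacle I foresee is the Peirce bookkeeping in the first step, namely isolating the $E_0(e)$-component $\{x_0,b,c\}$ cleanly from the identity above; once this is in hand, the rest is a direct application of the results already established in the paper.
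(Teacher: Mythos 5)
Your proposal is correct and follows essentially the same route as the paper: you verify condition $(c)$ of Proposition \ref{p first characterization of triple derivations} by proving $P_0(e)T(e)=0$ and then, via Lemma \ref{l 1 for linear maps triple derivable at orthogonal pairs}, Theorem \ref{t bd linear maps triple derivable at orth pairs on a JBstar algebra} and Remark \ref{r unital case for Theorem for JBstar algebras}, showing that the skewness of $P_2(e)T(e)$ forces the central part $\xi_e=0$, so that $P_2(e)T|_{E_2(e)}$ is a triple derivation. The only (harmless) deviation is your Peirce bookkeeping for $P_0(e)T(e)=0$: the paper obtains it more directly from the derived identity $\{a,T(b),c\}=0$ for $a,c\perp b$, applied with $a=c=P_0(e)T(e)$ and $b=e$, together with the axiom $\|\{x,x,x\}\|=\|x\|^3$.
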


\begin{proof} The implication $(a)\Rightarrow (b)$ has been already commented.\smallskip

$(b)\Rightarrow (a)$ Let us fix a tripotent $e\in E$. The mapping $T$ is triple derivable at orthogonal elements, and hence $T$ satisfies the property in \eqref{eq orthogonal property}. Since $P_0(e) T(e)\perp P_2(e)(e)=e$ (cf. Peirce arithmetic in page \pageref{Peirce arithmetic}) it follows that $$0=\{ P_0(e) T(e), T(e), P_0(e) T(e)\}= \{ P_0(e) T(e), P_0 (e) T(e), P_0(e) T(e)\},$$ and consequently $P_0(e) T(e)= 0$ by the axioms of JB$^*$-triples.\smallskip

Lemma \ref{l 1 for linear maps triple derivable at orthogonal pairs} implies that the mapping $P_2(e) T|_{E_2(e)}: E_2(e) \to E_2(e)$ is triple derivable at orthogonal elements. Since $E$ is a (unital) JB$^*$-algebra, Theorem \ref{t bd linear maps triple derivable at orth pairs on a JBstar algebra} (see also Remark \ref{r unital case for Theorem for JBstar algebras}) assures the existence of Jordan $^*$-derivation $D_e:E_2(e)\to E_2(e)$, a central element $\xi_e\in E_2(e)_{sa},$ and an anti-symmetric element $\eta_e$ in the multiplier algebra of $E_2(e)$, such that $$P_2(e) T(a) = D_e(a) + \xi_e \circ_e a + \eta_e \circ_e a, \hbox{ for all } a\in E_2(e).$$ The remaining hypothesis on $T$ assures that $(P_2(e)T(e))^{*_e} = -P_2(e)T(e)$, equivalently $$-\xi_e  - \eta_e = -(D_e(e) + \xi_e  + \eta_e ) =-P_2(e) T(e) = (P_2(e)T(e))^{*_e}$$ $$= (D_e(e) + \xi_e  + \eta_e )^{*_e} = (\xi_e  + \eta_e )^{*_e} = \xi_e  - \eta_e, $$ witnessing that $\xi_e  =0.$ Therefore, $P_2(e) T(a) = D_e(a) + \eta_e \circ_e a,$ for all $a\in E_2(e),$ which shows that $P_2(e) T|_{E_2(e)}$ is a triple derivation (cf. the comments in page \pageref{eq Jordan and triple derivations are related by HMPR} or \cite[Lemmata 1 and 2]{HoMarPeRu}). Proposition \ref{p first characterization of triple derivations}$(c)\Rightarrow (a)$ proves that $T$ is a triple derivation.
\end{proof}

It should be remarked that JBW$^*$-triples and compact JB$^*$-triples satisfy the hypotheses of the above theorem (cf. \cite[Lemma 3.11]{Horn87} and \cite{BuChu92}).

\section{Linear maps on a JB$^*$-triple which are triple derivable at orthogonal elements}\label{sec:5}

This section is devoted to the study of those continuous linear maps on a JB$^*$-triple which are triple derivable at orthogonal elements, that is, Conjecture \ref{conjecture 1} in full generality. Our aim is to establish a new result on preservers by providing sufficient conditions on a continuous linear map which is triple derivable at orthogonal pairs to be a triple derivation. As we shall see in this section, the triple setting will provide several surprises.\smallskip 

We recall some concepts. A subset $\mathcal{S}$ in a JB$^*$-triple $E$ will be called \emph{orthogonal} if $0\notin \mathcal{S}$ and $a\perp b$ for all $a\neq b$ in $\mathcal{S}$. The \emph{rank} of $E$, denoted by $r = r(E)$, will be the minimal cardinal number satisfying card$(\mathcal{S}) \leq r,$ for every orthogonal subset $\mathcal{S}\subset E$. As in the case of the contractive projection problem for real JB$^*$-triples (see \cite{Stacho2001}), and the study of linear maps between JB$^*$-triples preserving orthogonality (cf. \cite{BurFerGarMarPe}), the existence of rank one JB$^*$-triples makes Conjecture \ref{conjecture 1} invalid in this case, because every bounded linear operator on a rank one JB$^*$-triple is trivially  triple derivable at orthogonal elements. Let us see an example. A \emph{Cartan factor of type 1} is a JB$^*$-triple which coincides with the space $B(H,K)$, of all bounded linear operators between two complex Hilbert spaces $H$ and $K$, equipped with the triple product given in \eqref{eq product operators}. In the case $K=\mathbb{C}$, the JB$^*$-triple $B(H,\mathbb{C})$ identifies with $H$ under the triple product \begin{equation}\label{eq triple product rank one Cartan factors as Hilbert} \{a,b,c\} = \frac12 (\langle a | b\rangle c + \langle c | b\rangle a)\ \  (a,b,c\in H),
 \end{equation}where $\langle .| .\rangle$ denotes the inner product of $H$. The type 1 Cartan factor $B(H,\mathbb{C})$ is an example of a rank one JB$^*$-triple.\smallskip

We continue with a more detailed interpretation of Theorem \ref{t bd linear maps triple derivable at orth pairs on a JBstar algebra}.\smallskip

Let $z$ be a symmetric element in the centre of a unital JB$^*$-algebra $A$, and let $e$ be a tripotent in $A$. It is easy to check that
$$\{a, z\circ b, c\} = (a\circ (b\circ z)^*)\circ c + (c\circ (b\circ z)^*)\circ a - (a\circ c)\circ (b\circ z)^* $$
$$=z\circ( (a\circ b^*)\circ c + (c\circ b^*)\circ a - (a\circ c)\circ b^* )= z\circ \{a,b,c\}= \{ z\circ a,b,c\},$$ for all $a,b,c\in A$. Therefore, $\{e,z,e\} = 2 (e\circ z)\circ e - e^2 \circ z  = e^2 \circ z,$
$$P_2(e) (z\circ e) = Q(e) (z \circ e) = \{e,\{e,z\circ e, e\},e\} = z\circ \{e,\{e, e, e\},e\}= z\circ e,$$
and
$$Q(e) (z\circ e)  = \{e, z\circ e,e \} = z \circ \{e,e,e\} = z\circ e,$$ witnessing that $z\circ e$ is a symmetric element in the JB$^*$-algebra $A_2(e)$.

\begin{lemma}\label{l A centre and a tripotent} Let $T: A\to A$ be a bounded linear map on a unital JB$^*$-algebra. Suppose $T$ is triple derivable at orthogonal pairs. Let $e$ be a tripotent in $A$. Then the mapping $T_e = P_2(e) T|_{A_2(e)} : A_2(e) \to A_2(e)$ is triple derivable at orthogonal pairs {\rm(}see Lemma \ref{l 1 for linear maps triple derivable at orthogonal pairs}{\rm)}. Let $\delta: A\to A,$ $\delta_e: A_2(e) \to A_2(e),$ $\xi\in Z(A)_{sa}$ and $\xi_e\in Z(A_2(e))_{sa}$ denote the triple derivations and the elements satisfying $T(a) = \delta(a) + \xi\circ a$ {\rm(}$a\in A${\rm)} and $T_e(a) = \delta_e(a) + \xi_e\circ a$ {\rm(}$a\in A_2(e)${\rm)} whose existence is guaranteed by Theorem \ref{t bd linear maps triple derivable at orth pairs on a JBstar algebra}. Then $\xi\circ e = \xi_e.$
\end{lemma}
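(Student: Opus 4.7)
The plan is to compute $P_2(e)T(e)=T_e(e)$ in two different ways, and then exploit the uniqueness of the decomposition of an element of the unital JB$^*$-algebra $(A_2(e),\circ_e,{*_e})$ into its symmetric and skew symmetric parts. The first computation uses the global decomposition $T=\delta+M_{\xi}$ and evaluates at $e$. This gives $T(e)=\delta(e)+\xi\circ e$. From the calculation that immediately precedes the lemma, centrality and self-adjointness of $\xi$ yield $L(e,e)(\xi\circ e)=\{e,e,\xi\circ e\}=\xi\circ\{e,e,e\}=\xi\circ e$, so $\xi\circ e\in A_2(e)$ and $P_2(e)(\xi\circ e)=\xi\circ e$; moreover $(\xi\circ e)^{*_e}=\{e,\xi\circ e,e\}=\xi\circ\{e,e,e\}=\xi\circ e$, so $\xi\circ e$ is symmetric in $A_2(e)$. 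Applying $P_2(e)$ to $T(e)$ therefore gives
\begin{equation*}
T_e(e)=P_2(e)T(e)=P_2(e)\delta(e)+\xi\circ e.
\end{equation*}

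Next I invoke the observation recorded at the very start of Section~4: for any triple derivation $\delta$ and any tripotent $e$, $P_2(e)\delta(e)$ is skew symmetric in $A_2(e)$. Thus the right hand side of the previous display is precisely the decomposition of $T_e(e)$ into a skew symmetric part $P_2(e)\delta(e)$ and a symmetric part $\xi\circ e$, both in the JB$^*$-algebra $A_2(e)$.

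The second computation applies the local decomposition $T_e=\delta_e+M_{\xi_e}$ to $e$, which is the unit of $A_2(e)$. This yields $T_e(e)=\delta_e(e)+\xi_e\circ_e e=\delta_e(e)+\xi_e$. The same Section~4 observation, now applied to the triple derivation $\delta_e$ on $A_2(e)$ and the tripotent $e$ which is full in $A_2(e)$, forces $\delta_e(e)^{*_e}=-\delta_e(e)$; and by hypothesis $\xi_e\in Z(A_2(e))_{sa}$ is symmetric. So this is again a skew plus symmetric decomposition of $T_e(e)$ inside $A_2(e)$.

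Equating the two expressions for $T_e(e)$ and separating symmetric and skew symmetric parts (which is unambiguous because every element $y$ of the unital JB$^*$-algebra $A_2(e)$ admits a unique decomposition $y=\tfrac12(y+y^{*_e})+\tfrac12(y-y^{*_e})$) delivers both $\xi_e=\xi\circ e$ and, as a byproduct, $\delta_e(e)=P_2(e)\delta(e)$. No genuine obstacle arises; the only point requiring a little care is to verify that $\xi\circ e$ is indeed a symmetric element of $A_2(e)$ rather than living partially in $A_1(e)$, which is exactly what the preliminary computation preceding the lemma's statement provides.
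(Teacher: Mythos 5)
Your proof is correct and follows essentially the same route as the paper: evaluate $T_e(e)$ via both the global and the local decompositions, use the pre-lemma computation to see that $\xi\circ e$ is a symmetric element of $A_2(e)$, and separate symmetric and skew symmetric parts, the skew parts being $P_2(e)\delta(e)$ and $\delta_e(e)$. The only cosmetic difference is that you invoke the Section~4 observation for the skew symmetry of $P_2(e)\delta(e)$ directly, whereas the paper passes through Lemma \ref{l 1} and \cite[Lemma 1]{HoMarPeRu}; the substance is identical.
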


\begin{proof} Since, by Theorem \ref{t bd linear maps triple derivable at orth pairs on a JBstar algebra}, $T(a) = \delta(a) + \xi\circ a$ {\rm(}$a\in A${\rm)}, and thus $T_e(a) = P_2(e) T(a) = P_2(e) \delta(a) + P_2(e)(\xi\circ a)$ {\rm(}$a\in E_2(a)${\rm)}. On the other hand, by Theorem \ref{t bd linear maps triple derivable at orth pairs on a JBstar algebra}, $T_e(a) = \delta_e(a) + \xi_e\circ a$ {\rm(}$a\in A_2(e)${\rm)}. Lemma \ref{l 1} proves that $P_2(e) \delta|_{E_2(e)}$ is a triple derivation on $E_2(e)$. Therefore
\begin{equation}\label{eq a 08 09} \delta_e(e) + \xi_e = \delta_e(e) + \xi_e\circ_e e =  T_e(e) =  P_2(e) \delta(e) + P_2(e)(\xi\circ e)
 \end{equation} $$=  P_2(e) \delta(e) + P_2(e)\{\xi, 1,e\} =  P_2(e) \delta(e) + P_2(e)(\xi\circ \{1, 1,e\})  $$ $$=  P_2(e) \delta(e) + P_2(e)(\xi\circ e) =  P_2(e) \delta(e) + \xi\circ e,$$ where in the last equality we applied the comments before this lemma assuring that $\xi\circ e$ is a symmetric element in the JB$^*$-algebra $A_2(e)$.\smallskip

As we have commented before, since $\delta_e$ and $P_2(e) \delta|_{A_2(e)}$ are triple derivations on the unital JB$^*$-algebra $A_2(e)$, $\delta_e(e)^{*_e} = -\delta_e(e)$ and $ (P_2(e) \delta(e))^{*_e} = -  P_2(e) \delta(e)$ (cf. \cite[Lemma 1 and its proof]{HoMarPeRu}). By combining this observation with the identity in \eqref{eq a 08 09} we arrive at $\xi\circ e = \xi_e.$
\end{proof}

Along the rest of this section $T:M\to M$ will stand for a bounded linear operator on a JBW$^*$-triple and we shall assume that $T$ is triple derivable at orthogonal pairs. Lemma \ref{l 1 for linear maps triple derivable at orthogonal pairs} guarantees that for each tripotent $e\in M$ the mapping $T_e := P_2(e) T|_{M_2(e)} : M_2(e) \to M_2(e)$ is triple derivable at orthogonal pairs. Since $M_2(e)$ is a (unital) JBW$^*$-algebra, Theorem \ref{t bd linear maps triple derivable at orth pairs on a JBstar algebra} (see also Remark \ref{r unital case for Theorem for JBstar algebras}) there exist a triple derivation $\delta_e : M_2(e)\to M_2(e)$ and a (unique) central symmetric element $\xi_e\in Z(M_2(e))_{sa}$ such that $$T_e (a) = P_2(e) T(a) =\delta_e(a) + \xi_e\circ_e a, \hbox{ for all $a\in M_2(e)$}.$$ We shall try to define a mapping $S$ on $M$ defined by the elements $\xi_e$. Let us observe that, by Lemma \ref{l 1 for linear maps triple derivable at orthogonal pairs} and Theorem \ref{t bd linear maps triple derivable at orth pairs on a JBstar algebra}, $$\xi_e =\frac{P_2(e)T(e) + (P_2(e)T(e))^{*_e}}{2}= \frac{1}{2}(P_2(e) +Q(e)) T(e),$$ for every tripotent $e\in M$.\smallskip

An element $a$ in $M$ will be called \emph{algebraic} if it can be written as a finite (positive) combination of mutually orthogonal tripotents in $M$, that is, $a = \sum_{j=1}^{m} \lambda_j e_j$ with $\lambda_j>0$ and $e_1,\ldots, e_m$ mutually orthogonal tripotents in $M$. 

\begin{lemma}\label{l well definition on algebraic elements} Let $e_1,\ldots, e_{m_1}$ and $v_1,\ldots, v_{m_2}$ be two families of mutually orthogonal tripotents in $M$ and let $\lambda_j, \mu_k$ be positive real numbers such that $\sum_{j=1}^{m_1} \lambda_j e_j = \sum_{k=1}^{m_2} \mu_k v_k$. Then
$\sum_{j=1}^{m_1} \lambda_j \xi_{e_j} = \sum_{k=1}^{m_2} \mu_k \xi_{v_k}.$
\end{lemma}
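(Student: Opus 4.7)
The plan is to reduce both sums to the same intrinsic quantity attached to $a := \sum_{j=1}^{m_1} \lambda_j e_j = \sum_{k=1}^{m_2} \mu_k v_k$. The key observation is that, since every $\lambda_j$ and every $\mu_k$ is strictly positive, $a$ is an invertible positive element of the unital JBW$^*$-algebra $M_2(\sum_j e_j)$ and also of $M_2(\sum_k v_k)$; hence the range tripotent $r(a)$ satisfies $r(a) = \sum_j e_j = \sum_k v_k$, and I set $e := r(a)$. In this way both decompositions of $a$ live in the same Peirce 2-subspace $M_2(e)$, which is where the linearity argument will take place.

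By Lemma \ref{l 1 for linear maps triple derivable at orthogonal pairs}, $T_e = P_2(e)T|_{M_2(e)}$ is triple derivable at orthogonal pairs on the unital JBW$^*$-algebra $M_2(e)$, so Theorem \ref{t bd linear maps triple derivable at orth pairs on a JBstar algebra} furnishes a uniquely determined central symmetric element $\xi_e \in Z(M_2(e))_{sa}$ such that $T_e - M_{\xi_e}$ is a triple derivation. The crux is to prove that $\xi_w = \xi_e \circ_e w$ for every subtripotent $w\leq e$. To this end I would first verify the identification $(T_e)_w = T_w$: the joint Peirce decomposition relative to the orthogonal pair $(w, e-w)$ gives $M_1(e) = (M_1(w)\cap M_0(e-w)) \oplus (M_0(w)\cap M_1(e-w))$, so $P_2(w) P_1(e) = 0$, and analogously $P_2(w) P_0(e) = 0$. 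Thus $P_2(w) = P_2(w) P_2(e)$, the Peirce 2-projection of $M_2(e)$ at $w$ is the restriction of $P_2(w)$ to $M_2(e)$, and consequently $(T_e)_w(x) = P_2(w) P_2(e) T(x) = P_2(w) T(x) = T_w(x)$ for every $x\in M_2(w)$. Lemma \ref{l A centre and a tripotent} applied to $T_e$ on $M_2(e)$ at the tripotent $w$ identifies the central element of $(T_e)_w = T_w$ as $\xi_e \circ_e w$, which by uniqueness of the decomposition in Theorem \ref{t bd linear maps triple derivable at orth pairs on a JBstar algebra} must coincide with $\xi_w$.

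Specialising $w$ to each $e_j$ and to each $v_k$, and using the linearity of the map $\xi_e \circ_e (\cdot)$ on $M_2(e)$,
$$\sum_{j=1}^{m_1} \lambda_j \xi_{e_j} = \xi_e \circ_e \sum_{j=1}^{m_1} \lambda_j e_j = \xi_e \circ_e a = \xi_e \circ_e \sum_{k=1}^{m_2} \mu_k v_k = \sum_{k=1}^{m_2} \mu_k \xi_{v_k},$$
which is the desired identity. The principal technical step is the identification $(T_e)_w = T_w$ and the resulting formula $\xi_w = \xi_e \circ_e w$; once that is in hand, the lemma is a mere linearity computation inside $M_2(e)$.
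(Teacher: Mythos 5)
Your argument is correct and is essentially the paper's own proof: both pass to the common range tripotent $e=\sum_j e_j=\sum_k v_k$, apply Theorem \ref{t bd linear maps triple derivable at orth pairs on a JBstar algebra} in $M_2(e)$ to get $\xi_e$, deduce $\xi_{e_j}=\xi_e\circ_e e_j$ and $\xi_{v_k}=\xi_e\circ_e v_k$ from Lemma \ref{l A centre and a tripotent}, and conclude by linearity of $\xi_e\circ_e(\cdot)$. Your explicit verification that $P_2(w)=P_2(w)P_2(e)$ for $w\le e$, so that $(T_e)_w=T_w$ and the two candidate central elements coincide by uniqueness, only spells out a compatibility the paper leaves implicit.
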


\begin{proof} Let $b =\sum_{j=1}^{m_1} \lambda_j e_j = \sum_{k=1}^{m_2} \mu_k v_k$. The range tripotent of $b$ coincides with the element $u = \sum_{j=1}^{m_1} e_j = \sum_{k=1}^{m_2} v_k$. Clearly, $u \geq e_j, v_k$ for all $j,k$. By a new application of Theorem \ref{t bd linear maps triple derivable at orth pairs on a JBstar algebra} we deduce the existence of a triple derivation $\delta_u : M_2(u)\to M_2(u)$ and a (unique) central symmetric element $\xi_u\in Z(M_2(u))_{sa}$ such that $$T_u (a) = P_2(u) T(a) =\delta_u(a) + \xi_u\circ_u a, \hbox{ for all $a\in M_2(u)$}.$$ Evaluating at the element $b\in M_2(u)$ we get
$$ \sum_{j=1}^{m_1} \lambda_j  \ \xi_{e_j} = \sum_{j=1}^{m_1} \lambda_j  \ \xi_u\circ_u e_j = \xi_u\circ_u \left( \sum_{j=1}^{m_1} \lambda_j e_j \right) $$  $$=  \xi_u\circ_u \left( \sum_{k=1}^{m_2} \mu_k v_k \right) =  \sum_{k=1}^{m_2} \mu_k\ \xi_u\circ_u v_k =  \sum_{k=1}^{m_2} \mu_k\ \xi_{v_k},$$ where in the first and last equalities we applied Lemma \ref{l A centre and a tripotent}.
\end{proof}

The last ingredient to define a mapping $S:M\to M$ satisfying $S(e) = \xi_e$ for every tripotent $e\in M$ is the next lemma.

\begin{lemma}\label{l definition of S} Let $e$ and $v$ be two tripotents in $M$, and let $a\in M$ be an element such that $a\in M_2(e)\cap M_2(v)$. Then $\xi_v \circ_v a = \xi_e\circ_e a$.
\end{lemma}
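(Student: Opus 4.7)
The plan is to route both $\xi_e\circ_e a$ and $\xi_v\circ_v a$ through a common value $\xi_r\circ_r a$ coming from the range tripotent $r=r(a)$. Since $a\in M_2(e)\cap M_2(v)$, minimality of the range tripotent yields $r\leq e$ and $r\leq v$, so that $M_2(r)\subseteq M_2(e)\cap M_2(v)$ and $a\in M_2(r)$. By Lemma \ref{l 1 for linear maps triple derivable at orthogonal pairs}, the restriction $T_r=P_2(r)T|_{M_2(r)}$ is triple derivable at orthogonal pairs on the unital JB$^*$-algebra $M_2(r)$, so Theorem \ref{t bd linear maps triple derivable at orth pairs on a JBstar algebra} produces a unique central symmetric $\xi_r\in Z(M_2(r))_{sa}$.

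The first step is to identify $T_r$ with the analogous construction carried out inside $M_2(e)$. Using the joint Peirce decomposition with respect to the orthogonal pair $\{r,e-r\}$ one checks that $M_1(e)\cup M_0(e)\subseteq\ker P_2(r)$, hence $P_2(r)=P_2(r)P_2(e)$ on $M$. Therefore, for $x\in M_2(r)\subseteq M_2(e)$,
$$(T_e)_r(x)=P_2(r)T_e(x)=P_2(r)P_2(e)T(x)=P_2(r)T(x)=T_r(x).$$
Applying Lemma \ref{l A centre and a tripotent} inside the unital JB$^*$-algebra $A=M_2(e)$ to the map $T_e$ and the tripotent $r\in M_2(e)$, and invoking uniqueness of the central element in Theorem \ref{t bd linear maps triple derivable at orth pairs on a JBstar algebra}, I would conclude $\xi_r=\xi_e\circ_e r$. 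The symmetric argument with $v$ gives $\xi_r=\xi_v\circ_v r$.

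The next step is to show $\xi_e\circ_e a=\xi_r\circ_r a$. Since $\xi_e$ is central in $M_2(e)$ and $r$ is a projection of $M_2(e)$, they operator commute, whence
$$\xi_e=\xi_e\circ_e r+\xi_e\circ_e(e-r),\quad \xi_e\circ_e r\in M_2(r),\ \xi_e\circ_e(e-r)\in M_2(e-r).$$
For $a\in M_2(r)$, I would expand $(\xi_e\circ_e(e-r))\circ_e a=\{\xi_e\circ_e(e-r),e,a\}$ via $e=r+(e-r)$: the $r$-summand is killed by $\{M_0(r),M_2(r),M\}=0$, and the $(e-r)$-summand is killed (after swapping the outer entries by symmetry of the triple product) by $\{M_0(e-r),M_2(e-r),M\}=0$. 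Thus $\xi_e\circ_e a=(\xi_e\circ_e r)\circ_e a=\xi_r\circ_e a$, and the same expansion of $\{\xi_r,e,a\}$ further shows $\xi_r\circ_e a=\xi_r\circ_r a$. Running the identical computation with $v$ in place of $e$ yields $\xi_v\circ_v a=\xi_r\circ_r a$, and the desired equality follows.

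The principal technical obstacle is the careful Peirce-arithmetic bookkeeping, in particular verifying that the Peirce-2 projection of $r$ inside the unital JB$^*$-algebra $M_2(e)$ agrees with $P_2(r)$ computed in $M$; this is what legitimizes the identification $(T_e)_r=T_r$ and allows Lemma \ref{l A centre and a tripotent} to transport centrality information from $M_2(e)$ down to the common piece $M_2(r)$. Once this reduction to the range tripotent is in place, the centrality of $\xi_e$ does all the remaining work.
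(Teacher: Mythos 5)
The decisive step in your argument is the claim that ``minimality of the range tripotent yields $r\leq e$ and $r\leq v$'', and this is where the proof breaks down. What the hypothesis $a\in M_2(e)$ gives you is only that $r=r(a)$ lies in $M_2(e)$ (hence $M_2(r)\subseteq M_2(e)$ by Peirce arithmetic); the minimality of $r(a)$ is minimality among tripotents $u$ for which $a$ is \emph{positive} in $M_2(u)$, and membership of $a$ in $M_2(e)$ carries no positivity information. Concretely, for $a=-e$ one has $r(a)=-e$, and $e-r(a)=2e$ is not a tripotent, so $r(a)\not\leq e$; the same failure occurs whenever $a$ has a nontrivial negative or non-self-adjoint part relative to $e$. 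Consequently $e-r$ need not be a tripotent, $r$ need not be a projection of the JBW$^*$-algebra $M_2(e)$, and everything built on the joint Peirce decomposition with respect to $\{r,e-r\}$ --- the splitting $\xi_e=\xi_e\circ_e r+\xi_e\circ_e(e-r)$ with $\xi_e\circ_e(e-r)\in M_2(e-r)\subseteq M_0(r)$, and the subsequent Peirce-arithmetic cancellations proving $\xi_e\circ_e a=\xi_r\circ_r a$ --- has no justification as written.

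The first half of your argument does survive: since $r\in M_2(e)$, one has $Q(r)(M_1(e)\oplus M_0(e))=0$ directly from Peirce arithmetic, so $P_2(r)=P_2(r)P_2(e)$, $(T_e)_r=T_r$, and Lemma \ref{l A centre and a tripotent} (which only requires $r$ to be a tripotent of the unital JB$^*$-algebra $M_2(e)$, not a projection) together with uniqueness of the central element indeed gives $\xi_r=\xi_e\circ_e r=\xi_v\circ_v r$. What is missing is a correct passage from tripotents to the general element $a$. The paper does this by norm-approximating $a$ by positive combinations $\sum_j\lambda_j e_j$ of mutually orthogonal tripotents of $M_2(r(a))\subseteq M_2(e)\cap M_2(v)$ and applying Lemma \ref{l A centre and a tripotent} to each $e_j$ inside both $M_2(e)$ and $M_2(v)$, so no order relation $r\leq e$ is ever needed. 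Alternatively, your second half can be repaired without any approximation: $M_{\xi_e}$ lies in the centroid of $M_2(e)$, and centroid elements $S$ satisfy $S(x)=S\{x,r,r\}=\{x,r,S(r)\}$ for every tripotent $r$ and every $x\in M_2(r)$, whence $\xi_e\circ_e a=\{a,r,\xi_e\circ_e r\}=\{a,r,\xi_r\}=\xi_r\circ_r a$, and symmetrically for $v$. As it stands, however, the proposal rests on a false order relation and is not a complete proof.
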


\begin{proof} Theorem \ref{t bd linear maps triple derivable at orth pairs on a JBstar algebra} implies the existence of triple derivations $\delta_e : M_2(e)\to M_2(e)$ and $\delta_v : M_2(v)\to M_2(v)$ and (unique) central symmetric elements $\xi_e\in Z(M_2(e))_{sa}$ and $\xi_v\in Z(M_2(v))_{sa}$ such that $$T_e (b) = P_2(e) T(b) =\delta_e(b) + \xi_e\circ_e b, \hbox{ for all $b\in M_2(e)$},$$ and
$$T_v (c) = P_2(v) T(c) =\delta_v(c) + \xi_v\circ_v b, \hbox{ for all $c\in M_2(v)$}.$$

Let $u= r(a)$ denote the range tripotent of $a$ in $M$. It follows from the hypotheses that $u\in M_2(e)\cap M_2(v),$ and by the Peirce arithmetic $M_2(u)\subseteq M_2(e)\cap M_2(v).$ Since $M_2(u)$ is a JBW$^*$-triple, its set of tripotents is norm-total. Therefore, for each $\varepsilon >0$ we can find a finite family of mutually orthogonal tripotents $\{e_1,\ldots,e_{m_1}\}\subset M_2(u)$ and positive real numbers $\lambda_j$ such that 
$$\left\| \xi_e \circ_e a - \sum_{j=1}^{m_1} \lambda_j \xi_e \circ_e e_j \right\|<\frac{\varepsilon}{2}, \hbox{ and }   \left\| \xi_v \circ_v a - \sum_{j=1}^{m_1} \lambda_j \xi_v \circ_v e_j \right\|<\frac{\varepsilon}{2}.$$

Now, we deduce from Lemma \ref{l A centre and a tripotent}, applied to the JBW$^*$-algebras $M_2(e)$ and $M_2(v)$, that $\xi_e \circ_e e_j = \xi_{e_j}$ and $\xi_v \circ_v e_j = \xi_{e_j}$ for all $j\in \{1,\ldots, m_1\}$. It then follows from the previous inequalities that $\left\| \xi_e \circ_e a - \xi_v \circ_v a \right\| <\varepsilon$. the arbitrariness of $\varepsilon >0$ assures that $\xi_e \circ_e a = \xi_v \circ_v a$, which concludes the proof.
\end{proof}

For each $a\in M$, let us define $S(a) := \xi_e \circ_e a$, where $e$ is any tripotent in $M$ such that $a\in M_2(e)$. Lemma \ref{l definition of S} proves that the assignment $a\mapsto S(a)$ gives a well-defined mapping $S: M\to M$. It follows from the definition that $S(e) = \xi_e$ for each tripotent $e\in M$. In particular, $S|_{M_2(e)}: M_2(e) \to M_2(e)$ is a bounded linear mapping. Since $\xi_e$ is an element in the centre of $M_2(e)$, the mapping $S|_{M_2(e)}$ is in the centroid of $M_2(e)$ (a conclusion which is consistent with the observations we made in page \pageref{centroid and Peirce 2-subspaces}). The linearity of the mapping $S$ on the whole $M$ is not an obvious property, actually we shall only culminate our study of Conjeture \ref{conjecture 1} by assuming the following extra property:\smallskip

Let $N$ be a JBW$^*$-triple. We shall say that that \emph{linearity on $N$ is determined by Peirce 2-subspaces} if every mapping $S:N\to N$ such that $P_2(e) S|_{N_2(e)}: N_2(e) \to N_2(e)$ is a linear operator for every tripotent $e\in N$ must be a linear mapping. Every JBW$^*$-triple $N$ admiting a unitary tripotent $u$ (i.e. $N_2(u) = N$) clearly satisfies this property.\smallskip

Let us justify this property or relate it with the example we gave at the beginning of this section. A non-zero tripotent $e$ in a JB$^*$-triple $E$ is called minimal if $E_2(e) = \mathbb{C} e$.  Let $H$ be a complex Hilbert spaces regarded as a type 1 Cartan factor with the product in \eqref{eq triple product rank one Cartan factors as Hilbert}. It is known, and easy to check, that tripotents in $H$ are precisely the elements in the unit sphere of $H$ (and they are all minimal tripotents). Let $S:H\to H$ be a non-linear 1-homogeneous mapping on $H$, i.e., $S(\lambda x) = \lambda S(x)$ for all $x\in H$, $\lambda\in \mathbb{C}$. Since for each tripotent $e\in S(H)$, $H_2(e) = \mathbb{C} e$, and $P_2(e) (x) = \langle x| e\rangle e$, it is easy to see that $P_2(e) S(\lambda e) = \lambda \langle S(e)|e\rangle e$ and hence $P_2(e) S|_{H_2(e)}$ is linear, a property which is not enjoyed by $S$.\smallskip

\begin{theorem}\label{t triple derivable maps on JBWstar triples} Let $T: M\to M$ be a bounded linear mapping on a JBW$^*$-triple $M$. Suppose that linearity on $M$ is determined by Peirce 2-subspaces. Then the following statements are equivalent:\begin{enumerate}[$(a)$]\item $T$ is triple derivable at orthogonal pairs;
\item There exists a triple derivation $\delta: M\to M$ and an operator $S$ in the centroid of $M$ such that $T = \delta + S$.
\end{enumerate}
\end{theorem}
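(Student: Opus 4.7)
The direction $(b)\Rightarrow(a)$ is already recorded in the introduction, so my plan focuses on $(a)\Rightarrow(b)$ and exploits the map $S:M\to M$ constructed just before the statement, with $S(a)=\xi_e\circ_e a$ for any tripotent $e\in M$ satisfying $a\in M_2(e)$ (well defined by Lemma~\ref{l definition of S}). Three outstanding tasks remain: to show that $S$ is a bounded linear operator, that $T-S$ is a triple derivation on $M$, and that $S$ belongs to the centroid of $M$.

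For the linearity and boundedness of $S$, I would argue that for every tripotent $e\in M$ the restriction $S|_{M_2(e)}$ coincides with the Jordan multiplication operator $M_{\xi_e}$ on the unital JBW$^*$-algebra $M_2(e)$, which lies in the centroid of $M_2(e)$ because $\xi_e\in Z(M_2(e))_{sa}$. In particular $S$ preserves $M_2(e)$ and $P_2(e)S|_{M_2(e)}=S|_{M_2(e)}$ is linear for every tripotent $e$; the hypothesis that linearity on $M$ is determined by Peirce 2-subspaces then upgrades this to linearity of $S$ on all of $M$. Boundedness of $S$ is immediate from $\|\xi_e\|\le \|T\|$ and the fact that $a\in M_2(r(a))$ for every $a\in M$, giving $\|S\|\le \|T\|$.

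To show $T-S$ is a triple derivation I would invoke Theorem~\ref{t characterization of triple derivations}. First, $S$ itself is triple derivable at orthogonal pairs: given $a\perp b$ in $M$, the range tripotents $e_1=r(a)$ and $e_2=r(b)$ are orthogonal, so $S(a)\in M_2(e_1)$ and $S(b)\in M_2(e_2)$, and Peirce arithmetic together with $L(a,b)=0$ yields $\{S(a),b,c\}=\{a,S(b),c\}=\{a,b,S(c)\}=0$ for every $c\in M$; hence $T-S$ inherits this property. Second, for each tripotent $e$, Theorem~\ref{t bd linear maps triple derivable at orth pairs on a JBstar algebra} applied to $T_e$ gives the decomposition $P_2(e)T(e)=\delta_e(e)+\xi_e$, while $P_2(e)S(e)=\xi_e$; hence $P_2(e)(T-S)(e)=\delta_e(e)$ is skew symmetric in $M_2(e)$ because $\delta_e$ is a triple derivation on a unital JB$^*$-algebra (cf.\ \cite[Lemma 1]{HoMarPeRu}). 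Both hypotheses of Theorem~\ref{t characterization of triple derivations} are met, so $T-S$ is a triple derivation on $M$.

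The remaining point is that $S$ lies in $Z(M)$, i.e.\ $S\{a,b,c\}=\{S(a),b,c\}$ for all $a,b,c\in M$. The diagonal identity $S\{a,a,a\}=\{S(a),a,a\}$ is immediate, since $\{a,a,a\}\in M_2(r(a))$ and $S|_{M_2(r(a))}\in Z(M_2(r(a)))$. A polarization of this identity (substituting $a\mapsto x+\lambda y$ and comparing coefficients, as in the end of the proof of Proposition~\ref{p first characterization of triple derivations}) delivers both $S\{a,b,a\}=\{S(a),b,a\}$ and the half-derivation identity $2S\{a,b,c\}=\{S(a),b,c\}+\{a,b,S(c)\}$ for all $a,b,c\in M$. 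Upgrading this to the full centroid identity requires the sliding relation $\{S(a),b,c\}=\{a,b,S(c)\}$, which holds trivially whenever $a,b,c$ all lie in a common Peirce 2-subspace and which I would globalize using the Peirce-2-subspace determination hypothesis together with the local centroid structure of $S$. The main obstacle I foresee is precisely this last step: passing from the local centroid property of $S$ on each $M_2(e)$ to the global centroid property of $S$ on $M$ is where the theorem's hypothesis must do its heaviest lifting.
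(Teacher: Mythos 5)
Your construction of $S$, its linearity via the Peirce-2-determination hypothesis, the bound $\|S\|\le\|T\|$, and your verification that $T-S$ satisfies both hypotheses of Theorem \ref{t characterization of triple derivations} all agree with the paper's argument (your direct range-tripotent proof that $S$ is triple derivable at orthogonal pairs is a correct small shortcut; the paper instead deduces this from centroid membership, which it establishes first). The genuine gap is exactly the step you flag: membership of $S$ in the centroid of $M$, which is where most of the paper's work lies. What your polarization of the diagonal identity actually yields is that the defect $\Phi(a,b,c):=S\{a,b,c\}-\{S(a),b,c\}$ vanishes on the diagonal of the outer variables and is antisymmetric in $(a,c)$ (this is equivalent to the two identities you list), and antisymmetry of $\Phi$ is strictly weaker than $\Phi=0$. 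Moreover, the hypothesis that linearity on $M$ is determined by Peirce 2-subspaces is only a statement about when a map must be linear; once $S$ is known to be linear it gives no leverage on the sliding relation $\{S(a),b,c\}=\{a,b,S(c)\}$, and the local fact $S|_{M_2(e)}\in Z(M_2(e))$ only controls triple products of elements lying in a common Peirce 2-subspace, which a generic triple $(a,b,c)$ does not.

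The paper closes this gap by a different mechanism: it proves the Peirce invariance $S(M_j(v))\subseteq M_j(v)$ for every tripotent $v\in M$ and $j=0,1,2$. The cases $j=0,2$ follow from the definition of $S$, norm-totality of tripotents in the Peirce subspaces and continuity; the case $j=1$ is the delicate one, handled first for complete tripotents and then for a general tripotent $v$ by choosing a complete tripotent $e\ge v$ (via \cite[Lemma 3.12]{Horn87}) and decomposing $M_1(v)=\left(M_2(e)\cap M_1(v)\cap M_1(e-v)\right)\oplus\left(M_0(e-v)\cap M_1(e)\right)$, with Lemma \ref{l A centre and a tripotent} applied inside $M_2(e)$ to control the first summand. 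This yields $SL(v,v)=L(v,v)S$ for all tripotents $v$, hence $SL(a,a)=L(a,a)S$ for all $a\in M$ by norm-totality and continuity of $S$, and then the Dineen--Timoney description of the centroid (\cite[pages 330, 331]{DiTim88}) gives $S\in Z(M)$. Without this (or a substitute argument) your proof only produces a decomposition $T=(T-S)+S$ with $T-S$ a triple derivation and $S$ a bounded linear map that is locally central on each $M_2(e)$, which does not yet reach statement $(b)$.
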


\begin{proof} The implication $(b)\Rightarrow (a)$ holds for any JB$^*$-triple $M$ without any extra assumption.\smallskip

$(a)\Rightarrow (b)$ Let $S:M\to M$ be the mapping defined by Lemma \ref{l definition of S} and subsequent comments. We observe that, for each tripotent $e\in M$, $S(M_2(e))\subseteq M_2(e)$ by definition. Since, the mapping $S|_{M_2(e)}: M_2(e)\to M_2(e)$ is linear for every tripotent $e\in M$, it follows from the hypothesis on $M$ that $S$ is a linear mapping.\smallskip

We shall next prove that $S$ is continuous. Having in mind that, for each tripotent $e\in M$, we have $\xi_e= S(e) = \frac{1}{2}(P_2(e) + Q(e)) T(e)$ we deduce that $\|\xi_e \| \leq \|T\|$. Given $a$ in $M$ and any tripotent $u\in M$ such that $a\in M_2(u)$ we know that $S(a) = a\circ_u \xi_u,$ and hence $\|S(a)\| \leq \|T\| \ \|a\|$, which proves the continuity of $S$.\smallskip

We shall next show that $S$ lies in the centroid of $M$. For this purpose, let us fix a tripotent $e$ in $M$ and an arbitrary $a\in M$. Since Peirce subspaces $M_2(e)$, $M_1(e)$ and $M_0(e)$ are JBW$^*$-subtriples, the elements $P_2(e)(a)$, $P_1(e)(a)$, and $P_0(e)(a)$ can be approximated in norm by finite positive combinations of mutually orthogonal tripotents in the corresponding Peirce subspace. Suppose $\sum_{k=1}^{m_j} \lambda_k e_k$ is a positive combination of mutually orthogonal tripotents in $M_j(e)$. We shall deal first with the case $j=0,2$. Since $\xi_{e_k}\in M_2 (e_k)$ and $e_k\in M_j(e)$, we deduce from Peirce arithmetic that $\xi_{e_k}\in M_2 (e_k)\subseteq M_j(e)$. It follows from the definition of $S$ that \begin{equation}\label{eq 0921a} S\left(\sum_{k=1}^{m_j} \lambda_k e_k\right) = \sum_{k=1}^{m_j} \lambda_k \xi_{e_k}\in M_j(e), \hbox{ for } j=0,2 \hbox{ and } 1\leq k\leq m_j.
\end{equation} The continuity of $S$ implies that \begin{equation}\label{eq 0922c} S(P_j(e)(a))\in M_j (e), \hbox{ for every tripotent } e\in M, \hbox{ and } j=0,2.
\end{equation}

The case $j=1$ is more delicate. We assume first that $e$ is a complete tripotent in $M$. Let us take a tripotent $v$ in $M_1(e)$. We write $y_k = P_k(e) (\xi_{v})$ for $k =0, 1,2$. Since  $y_0=0$ and $\xi_{v}\in (M_2(v))_{sa}$, we deduce that $$ y_2 + y_1 = \xi_{v} = \{v,v,\xi\} = \{v,\xi_{v},v\}.$$ Now, by Peirce arithmetic with respect to $e$ we get $$y_2 = \{v,v,y_2\} =\{v,y_0,v\} = 0, \hbox{ and } y_1 = \{v,v,y_1\} =\{v,y_1,v\}.$$ It follows that $\xi_{v} = y_1 = P_1(e) (\xi_v)\in M_1(v)$. Since tripotents in the JBW$^*$-triple $M_1(e)$ are norm total, by employing the definition of $S$ and an identity like in \eqref{eq 0921a}, a similar argument to that given above implies that $S(M_1(e)) \subseteq M_1(e)$ and therefore \begin{equation}\label{eq 0922b} S(M_j(e))\in M_j (e), \hbox{ for all complete tripotent } e \in M \hbox{ and } j=0,1,2.
\end{equation}

We shall next show that \eqref{eq 0922b} holds for all tripotent $v\in M$. Let us fix a tripotent $v\in M$. By \cite[Lemma 3.12]{Horn87} there exists a complete tripotent $e\in M$ such that $v\leq e$. We can assume that $v$ is non-complete, and hence $e-v\neq 0$. Let us make some observations. Since $v\in M_2(e)$ the Peirce projections $P_j(e)$ and $P_k(v)$ commute for all $j,k\in \{0,1,2\}$. The Peirce 1-subspace $M_1(v)$ decomposes in the form
\begin{equation}\label{eq decomposition 0922}\begin{aligned} M_1 (v) &= \left(M_2(e) \cap M_1(v)\right) \oplus  \left(M_1(e) \cap M_1(v)\right) \\
&= \left(M_2(e) \cap M_1(v) \cap M_1(e-v)\right) \oplus \left(M_0(e-v) \cap M_1(e)\right),
\end{aligned}
 \end{equation} the first equality being clear because $e$ is complete and the corresponding Peirce projections commute.  For the second one we observe that if $x\in M_2(e) \cap M_1(v)$, by orthogonality we have $$x = \{e,e,x\} = \{v,v,x\} + \{e-v,e-v,x\} = \frac12 x +\{e-v,e-v,x\},$$ which shows that $\frac12 x =\{e-v,e-v,x\},$ and thus $x\in M_1(e-v)$.  Assume next that $x\in M_1(e) \cap M_1(v)$. That is, $$\frac12 x = \{e,e,x\} = \{v,v,x\} + \{e-v,e-v,x\} = \frac12 x + \{e-v,e-v,x\},$$ witnessing that $x\in M_0(e-v)$.  Finally, if $x\in M_0(e-v) \cap M_1(e)$, again by orthogonality we have $$\frac12 x = \{e,e,x\} = \{v,v,x\} + \{e-v,e-v,x\} = \{v,v, x\},$$ which implies that $x\in M_1(v)$. \smallskip
 
The summands $W_1= M_2(e) \cap M_1(v) \cap M_1(e-v)$ and $W_2 = M_0(e-v) \cap M_1(e)$ are JBW$^*$-subtriples of $M$ and $M_1(v) = W_1\oplus W_2$. By \eqref{eq 0922b} $S(M_1(e))\subseteq M_1(e)$ because $e$ is complete; and by \eqref{eq 0921a} $S(M_0(e-v) )\subseteq M_0(e-v)$. Therefore \begin{equation}\label{eq S maps W2 into itself } S(W_2) \subseteq M_0(e-v) \cap M_1(e) = W_2.
\end{equation} 

Take now a tripotent $w$ in the JBW$^*$-triple $W_1= M_2(e) \cap M_1(v) \cap M_1(e-v)$. In this case, by Lemma \ref{l A centre and a tripotent}, applied to the JBW$^*$-algebra $M_2(e)$ and the tripotents $v, e-v$ and $e$, we get $\xi_{v} = \xi_e\circ_e v$, $\xi_{e-v} = \xi_e\circ_e (e-v)$, $$\xi_{e} = \xi_e\circ_e e = \xi_e\circ_e v + \xi_e\circ_e (e-v) = \xi_{v} +\xi_{e-v},$$ with $v, \xi_{v} \perp \xi_{e-v}, e-v,$ because $v$ and $e-v$ are two orthogonal projections in $M_2(e)$ and $\xi_e$ is a symmetric central element in the latter JBW$^*$-algebra. Lemma  \ref{l A centre and a tripotent} also implies that
$$\xi_{w} = \xi_e\circ_e w = \{\xi_e,e,w\} \in M_2(e),$$ and by the above properties 
$$ \{\xi_e,e,w\} = \{\xi_e,v,w\} + \{\xi_e,e-v,w\}= \{\xi_v,v,w\} + \{\xi_{e-v},e-v,w\}.$$ By Peirce arithmetic $$\{\xi_v,v,w\}\in M_{2-2+1}(v)\cap M_{0-0+1} (e-v) = M_1(v) \cap M_1(e-v),$$ and 
$$\{\xi_{e-v},e-v,w\}\in M_{2-2+1}(e-v)\cap M_{0-0+1} (v) = M_1(v) \cap M_1(e-v).$$ Therefore, $$\xi_{w} \in M_2(e) \cap M_1(v) \cap M_1(e-v)= W_1,$$ for every tripotent $w\in W_1$. Having in mind that the set of tripotents in the JBW$^*$-triple $W_1$ is norm-total, a linearization like the one in \eqref{eq 0921a} and the norm continuity and linearity of $S$ assert that $S(W_1)\subseteq W_1$. Combining the latter conclusion with \eqref{eq S maps W2 into itself }, \eqref{eq decomposition 0922} and the linearity of $S$ we get $S(M_1(v)) \subseteq W_1\oplus W_2 = M_1(v)$.  It then follows that \begin{equation}\label{eq 0922 Peirce 1-subspaces for general trip} S(M_j(v))\in M_j (v), \hbox{ for every tripotent } v\in M, \hbox{ and } j=0,1,2,
\end{equation} (cf. \eqref{eq 0922c} for $j=0,2$).\smallskip

Consequently, for each tripotent $v\in M$, $$S L(v,v)(a)  = S(P_2(v) (a) + \frac12 P_1(v) (a)) $$ $$= S(P_2(v) (a)) + \frac12 S(P_1(v) (a)) =L(v,v) S(a).$$ Since the set of tripotents is norm-total in $M$ and $S$ is continuous, the identity $S L(a,a) = L(a,a) S$ holds for every $a\in M$, witnessing that $S$ is an element in the centroid of $M$ (cf. \cite[pages 330, 331]{DiTim88}).\smallskip

Finally, the linear mapping $\delta = T-S: M\to M$ is linear and continuous. Furthermore, $\delta$ is triple derivable at orthogonal pairs because $T$ and $S$ are (cf. Lemma \ref{l properies of the conjugate}$(b)$). It follows from the definition of $S$ (see also Theorem \ref{t bd linear maps triple derivable at orth pairs on a JBstar algebra}) that for each tripotent $e\in M$, we have $$ \frac{1}{2}(P_2(e) + Q(e)) \delta (e) = \frac{1}{2}(P_2(e) + Q(e)) (T-S)(e) = \xi_e -\xi_e=0. $$ Theorem \ref{t characterization of triple derivations} assures that $\delta $ is a triple derivation. Clearly $T = \delta +S$.
\end{proof}

A subspace $I$ of a JB$^*$'triple $E$ is called a \emph{triple ideal} if $\{E,E,I\}+ \{I,E,E\}\subseteq I$. A JBW$^*$-triple which cannot be decomposed as a  direct orthogonal sum of two non-trivial ideals is called a JBW$^*$-triple factor (cf. \cite[$(4.7)$]{Horn87}). The centroid of a JBW$^*$-factor reduces to the complex multiples of the identity mapping (see \cite[Corollary 2.11]{DiTim88}). In particular the centroid of each Cartan factor is one-dimensional.\smallskip

The simplicity of the centroid in the case of Cartan factors and atomic JBW$^*$-triples allows us to relax some of the hypothesis in the previous Theorem \ref{t triple derivable maps on JBWstar triples}. An atomic JBW$^*$-triple is a JBW$^*$-triple which coincides with the $\ell_\infty$-sum of a family of Cartan factors. We have already presented the Cartan factors of type 1 at the beginning of this section, the definition of the remaining Cartan factors reads as follows: Suppose $j$ is a conjugation (i.e. a conjugate linear isometry of period 2) on a complex Hilbert space $H$ and define a linear involution on $B(H)$ given by $x\mapsto x^t:=jx^*j$. The JB$^*$-subtriple of $B(H)$ of all $t$-skew-symmetric (respectively, $t$-symmetric) operators in $B(H)$ is called a Cartan factor of type 2 (respectively, of type 3). A Cartan factor of type 4, is a a complex Hilbert space provided
with a conjugation $x\mapsto \overline{x},$ where triple product and the norm are  given by $$\{x, y, z\} = (x|y)z + (z|y) x -(x|\overline{z})\overline{y},$$ and $ \|x\|^2 = (x|x) + \sqrt{(x|x)^2 -| (x|\overline{x}) |^2},$ respectively. The \emph{Cartan factors of types 5 and 6} (also called \emph{exceptional} Cartan factors) are spaces of matrices over the eight dimensional complex algebra of Cayley numbers; the type 6 consists of all 3 by 3 self-adjoint matrices and has a natural Jordan algebra structure, and the type 5 is the subtriple consisting of all 1 by 2 matrices (see \cite{Ka97} for a detailed presentation of Cartan factors).\smallskip

We recall that two tripotents $u,v$ in a JB$^*$-triple $E$ are called \emph{collinear} (written $u\top v$) if $u\in E_1(v)$ and $v\in E_1(u)$. We say that $u$ \emph{governs} $v$ ($u \vdash v$ in short) whenever $v\in U_{2} (u)$ and $u\in U_{1} (v)$. According to \cite{DanFri87,Neher87}, an ordered quadruple $(u_{1},u_{2},u_{3},u_{4})$ of tripotents in a JB$^*$-triple $E$ is called a \emph{quadrangle} if $u_{1}\bot u_{3}$, $u_{2}\bot u_{4}$, $u_{1}\top u_{2}$ $\top u_{3}\top u_{4}$ $\top u_{1}$ and $u_{4}=2 \{
{u_{1}},{u_{2}},{u_{3}}\}$ (the latter equality also holds if the indices are permutated cyclically, e.g. $u_{2} = 2 \{{u_{3}},{u_{4}},{u_{1}}\}$). An ordered triplet $ (v,u,\tilde v)$ of tripotents in $E$, is called a \emph{trangle} if $v\bot \tilde v$, $u\vdash v$, $u\vdash \tilde v$ and $ v = Q(u)\tilde v$.\smallskip

We say that a tripotent $u\in E$ has rank two if it can be written as the sum of two orthogonal minimal tripotents.

\begin{theorem}\label{t triple derivable maps on atomic JBWstar triples} Let $T: M\to M$ be a bounded linear mapping on an atomic JBW$^*$-triple $M$.
Suppose that $M = \bigoplus_{j}^{\infty} C_j$, where each $C_j$ is a Cartan factor with rank at least $2$.
Then the following statements are equivalent:\begin{enumerate}[$(a)$]\item $T$ is triple derivable at orthogonal pairs;
\item There exists a triple derivation $\delta: M\to M$ and an operator $S$ in the centroid of $M$ such that $T = \delta + S$.
\end{enumerate}
\end{theorem}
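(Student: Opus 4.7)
The implication $(b)\Rightarrow(a)$ follows as in Theorem~\ref{t triple derivable maps on JBWstar triples}, so I focus on $(a)\Rightarrow(b)$. The first step is to show that $T$ preserves each Cartan factor $C_j$. For $a\in C_j$ and $b\in C_k$ with $j\neq k$ we have $a\perp b$, so $\{a,b,T(c)\}=0$ and the hypothesis yields $\{T(a),b,c\}+\{a,T(b),c\}=0$ for every $c\in M$. Since the triple product of the $\ell_\infty$-sum $M=\bigoplus_j C_j$ is diagonal, the two summands sit in distinct Cartan factors and must vanish separately; denoting by $\pi_k\colon M\to C_k$ the projection and setting $c=\pi_k T(a)$ in $\{\pi_k T(a),b,c\}=0$ for all $b\in C_k$ gives $L(\pi_k T(a),\pi_k T(a))=0$, whence $\pi_k T(a)=0$. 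Consequently $T_j:=T|_{C_j}\colon C_j\to C_j$ is well-defined, bounded, and still triple derivable at orthogonal pairs, and it suffices to write each $T_j=\delta_j+\mu_j\,\mathrm{Id}_{C_j}$ with $\delta_j$ a triple derivation on $C_j$ and $\mu_j\in\mathbb{C}$, using that $Z(C_j)=\mathbb{C}\,\mathrm{Id}_{C_j}$.

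Next, for every tripotent $e\in C_j$, Lemma~\ref{l 1 for linear maps triple derivable at orthogonal pairs} together with Theorem~\ref{t bd linear maps triple derivable at orth pairs on a JBstar algebra} applied to $P_2(e)T_j|_{(C_j)_2(e)}$ on the unital JB$^*$-algebra $(C_j)_2(e)$ yields the symmetric central element $\xi_e=\tfrac12(P_2(e)+Q(e))T_j(e)\in Z((C_j)_2(e))_{sa}$. A key structural fact, specific to Cartan factors and available from the explicit description of their Peirce decompositions, is that $(C_j)_2(e)$ is itself a factor JB$^*$-algebra, so its centre reduces to $\mathbb{C}e$ and $\xi_e=\mu_e\,e$ for a unique real scalar $\mu_e$. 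To show that $\mu_e$ is independent of $e$, observe that whenever $e\in(C_j)_2(u)$ one has $(C_j)_2(e)\subseteq(C_j)_2(u)$, so Lemma~\ref{l A centre and a tripotent} applied inside the JBW$^*$-algebra $(C_j)_2(u)$ gives $\xi_e=\xi_u\circ_u e=\mu_u e$, hence $\mu_e=\mu_u$. For two orthogonal minimal tripotents $e\perp f$ the rank-$2$ tripotent $u=e+f$ satisfies $e,f\in(C_j)_2(u)$ with $(C_j)_2(u)$ a rank-$2$ factor, yielding $\mu_e=\mu_u=\mu_f$. For an arbitrary pair of minimal tripotents the rank $\geq 2$ hypothesis allows one to embed them, through chains of orthogonal, collinear, and governing relations dictated by the grid of the Cartan factor, into the Peirce 2-subspace of a common rank-$2$ (or higher) tripotent with factorial Peirce 2-subspace; the value then propagates to a common scalar $\mu_j$. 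Decomposing higher-rank tripotents as finite orthogonal sums of minimal ones extends the equality $\mu_e=\mu_j$ to every tripotent $e\in C_j$.

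Finally, I would define $S\in Z(M)$ by $S|_{C_j}:=\mu_j\,\mathrm{Id}_{C_j}$ (bounded since $|\mu_j|\leq\|T\|$), and put $\delta:=T-S$. Then $\delta$ is bounded, linear, and triple derivable at orthogonal pairs (as the sum of such a map and a centroid element); for every tripotent $e\in M$ the symmetric part of $P_2(e)\delta(e)=P_2(e)T(e)-\mu_j e$ equals $\xi_e-\mu_j e=0$, so $(P_2(e)\delta(e))^{*_e}=-P_2(e)\delta(e)$, and Theorem~\ref{t characterization of triple derivations} identifies $\delta$ as a triple derivation, giving $T=\delta+S$. The hard part of the argument is the constancy of $\mu_e$: establishing that the grid of minimal tripotents of a rank $\geq 2$ Cartan factor is connected through rank-$2$ factorial Peirce 2-subspaces requires a case analysis across the six classes of Cartan factors (or an appeal to the general theory of Jordan grids), and this is precisely where the rank $\geq 2$ hypothesis is essential, since rank-$1$ Cartan factors admit no non-trivial orthogonal pairs and the desired decomposition fails outright there.
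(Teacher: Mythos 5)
Your overall strategy is the same as the paper's: reduce to the individual Cartan factors (your opening argument that $\pi_k T(a)=0$ for $a\in C_j$, $j\neq k$, is a clean variant of what the paper proves at the end via tripotents and norm-totality), then use Lemma \ref{l A centre and a tripotent} together with the one-dimensionality of the centroid of (Peirce $2$-subspaces of) Cartan factors to show that all the elements $\xi_e$ are a common scalar multiple $\mu_j$ of $e$, and conclude with Theorem \ref{t characterization of triple derivations}. There are, however, concrete gaps. The step you yourself call ``the hard part'' --- that two arbitrary minimal tripotents of $C_j$ sit inside the Peirce $2$-subspace of a common rank-two tripotent whose Peirce $2$-subspace is a factor --- is only asserted; the paper obtains it from \cite[Lemma 3.10]{FerPe18Adv} (any two minimal tripotents fit into a quadrangle or a trangle, up to coefficients), so you must either invoke that lemma or actually carry out the grid argument. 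As written, the constancy of $\mu_e$ over non-orthogonal minimal tripotents is unproved.

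Two further points need repair. First, ``decomposing higher-rank tripotents as finite orthogonal sums of minimal ones'' is false for infinite-rank tripotents in infinite-dimensional Cartan factors (e.g.\ the unit of $B(H)$); the correct route, and the one the paper takes, is that $(C_j)_2(u)$ is again a Cartan factor for \emph{every} tripotent $u$ (cf.\ \cite[Lemma 3.9, Structure Theorem 3.14 and Classification Theorem 3.20]{Neher87}), hence $\xi_u=\alpha_u u$, and evaluating Lemma \ref{l A centre and a tripotent} at a minimal tripotent $e\in (C_j)_2(u)$ gives $\alpha_u=\mu_j$; you already have all the ingredients for this, so the slip is repairable (alternatively use weak$^*$-convergent orthogonal decompositions). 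Second, your final appeal to Theorem \ref{t characterization of triple derivations} on $M$ requires the skew-symmetry of $P_2(e)\delta(e)$ at \emph{every} tripotent $e$ of $M$, and a generic tripotent $e=(e_j)_j$ has nonzero components in infinitely many factors, so the formula ``$P_2(e)\delta(e)=P_2(e)T(e)-\mu_j e$'' does not apply as written. This is fixable either by applying Lemma \ref{l A centre and a tripotent} inside the JBW$^*$-algebra $M_2(e)$ to each $e_j\leq e$, which yields $\xi_e=(\mu_j e_j)_j=S(e)$ and hence the required skewness, or by extending your own first-step argument to elements whose $k$-th component vanishes, which shows that $T$ acts componentwise and reduces everything to the single-factor case --- essentially what the paper does when it assembles $T=\delta+S$ factorwise.
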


\begin{proof} We have already commented that $(b)\Rightarrow (a)$ holds for any JB$^*$-triple $M$ without any extra assumption.\smallskip

$(a)\Rightarrow (b)$ We consider again the mapping  $S:M\to M$ defined by Lemma \ref{l definition of S} and subsequent comments. We observe that $S(M_2(e))\subseteq M_2(e)$ by definition. We have seen above that for each tripotent $e\in M$ the mapping $S|_{M_2(e)}: M_2(e)\to M_2(e)$ is a bounded linear operator in the centroid of $M_2(e)$ (see the comments after Lemma \ref{l definition of S}).\smallskip

By \cite[Proposition 3]{HoMarPeRu} Cartan factors of type 1 with dim$(H)=$dim$(K)$, Cartan factors of type 2 with dim$(H)$ even or infinite and all Cartan factors of type 3 admit a unitary element, and hence they are unital JBW$^*$-algebras. Clearly, the Cartan factor of type 6 also admits a unitary element and enjoys the same structure. Suppose we can find a unitary element $u_j\in C_j$. In this case, $P_2(u_j) T|_{(C_j)_2(u_j)=C_j}: (C_j)_2(u_j)= C_j \to C_j$ is linear, continuous and triple derivable at orthogonal pairs with $\| P_2(u_j) T|_{C_j} \|\leq \|T\|$. It follows from Theorem \ref{t bd linear maps triple derivable at orth pairs on a JBstar algebra} that there exists a triple derivation $\delta_j : C_j \to C_j$ and a symmetric bounded linear mapping $S_j$ in the centroid of $C_j$ such that $P_2(u_j) T|_{C_j} = \delta_j + S_j$. Since $C_j$ is a factor $S_j = \alpha_j Id_{C_j}$ for a real number $\alpha_j$ with $|\alpha_j| \leq \|T\|$, and \begin{equation}\label{eq decomposition when there exists a unitary} P_2(u_j) T (a) = \delta_j (a) + \alpha_j a, \hbox{ for all } a\in C_j.
\end{equation}

We suppose now that $C_j$ does not contain any unitary element. Consider two minimal tripotents $e$ and $v$ in $C_j$. By \cite[Lemma 3.10]{FerPe18Adv} one of the following statements holds:\begin{enumerate}[$(a)$]\item There exist minimal tripotents $v_2,v_3,v_4$ in $C_j$, and complex numbers $\alpha$, $\beta$, $\gamma$, $\delta$ such that $(e,v_2,v_3,v_4)$ is a quadrangle, $|\alpha|^2 +| \beta|^2 + |\gamma|^2 + |\delta|^2 =1$, $\alpha \delta  = \beta \gamma$, and $v = \alpha e + \beta v_2 + \gamma v_4 + \delta v_3$;
\item There exist a minimal tripotent $\tilde e\in C_j$, a rank two tripotent $u\in C_j$, and complex numbers $\alpha, \beta, \delta$ such that $(e, u,\tilde e)$ is a trangle, $|\alpha|^2 +2 | \beta|^2 + |\delta|^2 =1$, $\alpha \delta  = \beta^2$, and $v = \alpha e+ \beta u +\delta \tilde e$.
\end{enumerate} This result, in particular, implies that for each rank two tripotent $u$ in a Cartan factor $C$ of rank at least two the Peirce subspace $C_2(u)$ is again a factor. It follows from the above that we can find a rank two tripotent $u\in C_j$ such that $e,v\in (C_j)_2(u)$. By Lemma \ref{l 1 for linear maps triple derivable at orthogonal pairs} the mapping $P_2(u) T|_{M_2(u)= (C_j)_2(u)} : (C_j)_2(u)\to (C_j)_2(u)$ is a bounded linear mapping which is triple derivable at orthogonal pairs. Theorem \ref{t bd linear maps triple derivable at orth pairs on a JBstar algebra} proves the existence of a triple derivation $\delta_{u}$ on $(C_j)_2(u)$ and a symmetric element $S_u$ in the centroid of $(C_j)_2(u)$ such that $P_2(u) T|_{M_2(u)}= \delta_u +S_u.$ Since $(C_j)_2(u)$ is factor, there exists a real number $\alpha_u$ such that $S_u (x) = \alpha_u x$ for all $x\in (C_j)_2(u)$, that is, $\xi_u = \alpha_u u$. Consequently, by Lemma \ref{l A centre and a tripotent}, $\xi_e = \xi_u \circ_u e= S_u(e)= \alpha_{u} e$ and  $\xi_v= \xi_u \circ_u v= S_u(v) = \alpha_{u} v$. We have therefore concluded that for any couple of minimal tripotents $e,v \in C_j$ $$  \exists  \alpha_{e,v}\in \mathbb{R} \hbox{ such that  } \xi_e = S_u(e)= \alpha_{e,v} e,\hbox{ and } \xi_v= S_u(v) = \alpha_{e,v} v,$$ and thus \begin{equation}\label{eq scalar multiple on minimal tripotents}  \exists  \alpha_{j}\in \mathbb{R} \hbox{ such that  } \xi_e = \alpha_{j} e,\hbox{ for all minimal tripotent } e\in C_j.
\end{equation}

Let $a$ be any element in $C_j$ and let $u$ be a tripotent such that $a\in (C_j)_2(u)$. Actually, $(C_j)_2(u)$ is another Cartan factor (cf. \cite[Lemma 3.9, Structure Theorem 3.14 and Classification Theorem 3.20]{Neher87}). By Lemma \ref{l 1 for linear maps triple derivable at orthogonal pairs} and Theorem \ref{t bd linear maps triple derivable at orth pairs on a JBstar algebra}, there exists a triple derivation $\delta_u$ on $(C_j)_2(u)$ and a symmetric element in the centroid of $(C_j)_2(u)$, that is, a real number $\alpha_u$ such that $$P_2(u) T(a) = \delta_u (a) + \alpha_u a, \hbox{ for all } a\in (C_j)_2(u).$$ When the previous identity is evaluated at a minimal tripotent $e\in (C_j)_2(u)$ we deduce that $\alpha_u = \alpha_j$, equivalently, $\xi_u = \alpha_j u$ (cf. \eqref{eq scalar multiple on minimal tripotents}), and thus $S(a) = \alpha_j a$ for all $a\in C_j$. We observe that $|\alpha_j|\leq \|T\|$ for all $j$.\smallskip

Let $P_j$ denote the projection of $M$ onto $C_j$. Since $P_j$ is a contractive projection, Lemma \ref{l 1 for linear maps triple derivable at orthogonal pairs} assures that $P_j T|_{C_j} : C_j \to C_j$ is triple derivable at orthogonal pairs. The operator $\delta_j = P_j T|_{C_j} -S|_{C_j}$ also is triple derivable at orthogonal pairs, and by the arguments above, $\frac12 (P_2(u)+Q(u)) \delta_j (u) = 0$ for all tripotent $u\in C_j$. Theorem \ref{t characterization of triple derivations} implies that $\delta_j$ is a triple derivation on $C_j$ and \begin{equation}\label{eq decomposition on a Cartan factor without unitaries} P_j T(a) = \delta_j (a) + \alpha_j a, \hbox{ for all } a\in C_j.
\end{equation}

For each two different indices $j_1\neq j_2$, let us take two tripotents $e_{j_1}\in C_{j_1}$ and $e_{j_2}\in C_{j_2}$. Since $e_{j_1}\perp e_{j_2}$ it follows from the hypotheses that $$0 = \{T(e_{j_1}), e_{j_2}, e_{j_2}\} + \{e_{j_1}, T(e_{j_2}), e_{j_2}\}.$$ Let us consider the second summand. Since $e_{j_1}\in M_0(e_{j_2})$, $e_{j_2}\in M_2(e_{j_2})$, by Peirce arithmetic, $\{e_{j_1}, T(e_{j_2}), e_{j_2}\} = \{e_{j_1}, P_1(e_{j_2}) T(e_{j_2}), e_{j_2}\} =0$, because $P_1(e_{j_2}) (M) \subseteq C_{j_2}\subset M_0(e_{j_1})$. This implies that $0 = \{T(e_{j_1}), e_{j_2}, e_{j_2}\},$ which proves that $T(e_{j_1})\perp e_{j_2}$ for every tripotent $e_{j_2}\in C_{j_2}$. The arbitrariness of $j_2\neq j_1$ and the fact that tripotents are norm-total in each $C_j$ assure that $T(C_{j_1}) \subseteq C_{j_1}$ for every $j_1$.\smallskip

Finally, since for each $j$, $T(C_{j}) \subseteq C_{j}$ and $T|_{C_j} : C_j \to C_j$ can be written in the form $T|_{C_j} = \delta_j +\alpha_j Id_{C_j}$ for a triple derivation $\delta_j$ on $C_j$ and a real number $\alpha_j$ with $|\alpha_j|\leq \|T\|$ (cf. \eqref{eq decomposition when there exists a unitary} and \eqref{eq decomposition on a Cartan factor without unitaries}), we deduce that $$T((a_j)_j) = \delta ((a_j)_j)  + S((a_j)_j), $$ where $\delta : M\to M$ is the triple derivation given by $\delta((a_j)_j) = (\delta_j(a_j))_j$ and $S$ is the bounded linear mapping given by $S((a_j)_j)  = (\alpha_j a_j)_j$, which is an element in the centroid of $M$.
\end{proof}

\medskip\medskip

\textbf{Acknowledgements} Second author partially supported by the Spanish Ministry of Science, Innovation and Universities (MICINN) and European Regional Development Fund project no. PGC2018-093332-B-I00, Programa Operativo FEDER 2014-2020 and Consejer{\'i}a de Econom{\'i}a y Conocimiento de la Junta de Andaluc{\'i}a grant number A-FQM-242-UGR18, and Junta de Andaluc\'{\i}a grant FQM375.\smallskip

\smallskip\smallskip

\end{document}